\newcommand{\obra}[3]{{\sc #1} {\em #2}. {#3}.}
\theoremstyle{definition}
\newtheorem{defin}{Definition}[section]
\newtheorem{ejem}[defin]{Example}
\newtheorem{teo}[defin]{Theorem}
\newtheorem{prop}[defin]{Proposition}
\newtheorem{lema}[defin]{Lemma}
\theoremstyle{remark}
\newtheorem{rem}[defin]{Remark}
\newtheorem{notation}[defin]{Notation}
\begin{document} 

\title[Stratified Reduction of Singularities]{Stratified Reduction of Singularities of Generalized Analytic Functions}
\author{B. Molina-Samper}
\address{Departamento de Matemáticas. Universidad Autónoma de Madrid,  Spain.}
\email{beatriz.molina@uam.es}
\author{J. Palma-Márquez}
\address{Weizmann institute of science, Rehovot, Israel.}
\email{jesus.palma@weizmann.ac.il}
\author{F. Sanz-Sánchez }
\address{Dpto. \'Algebra, An\'alisis Matem\'atico, Geometr\'ia y Topolog\'ia. Universidad de Valladolid, Spain}
\email{fsanz@agt.uva.es}
\thanks{First and third authors are partially supported by the Project ``M\'etodos asint\'oticos, algebraicos y geom\'etricos en foliaciones singulares y sistemas din\'amicos'' (Ref.:  PID2019-105621GB-100) of the Ministerio de Ciencia in Spain. First author is partially supported by the ``Programa de becas postdoctorales DGAPA'' of the UNAM in México . Second author is  partially supported by Israel Science Foundation grant 1167/17; by funding received from the MINERVA Stiftung with funds from Germany’s Federal Ministry of Education and Research (BMBF); and Papiit Dgapa UNAM IN110520}
\subjclass[2020]{14E15, 14P15, 16W60, 32B30, 32C05, 32C45}
\keywords{Blowing-up morphism, Reduction of singularities, Generalized power series, Principialization of ideals}

\date{}
\dedicatory{}

\maketitle

\begin{abstract}

Generalized analytic functions over generalized analytic manifolds are build from sums of convergent real power series with non-negative real exponents (and some well-ordering condition on the support). In a paper by Martín-Villaverde, Rolin and Sanz-Sánchez it is established a result of local reduction of singularities for such a functions. In this paper we deal with a first approach of the global problem. Namely, we prove that a germ of generalized analytic function can be transformed by a finite sequence of blowing-ups with closed centers into a function which is locally of monomial type with respect to the coordinates defining the boundary of the manifold (a normal crossings divisor).
	
\end{abstract}

\section{Introduction}\label{sec:intro}
In this paper, a generalized power series (in $n$ variables and with coefficients in some ring $A$) is a power series with tuples of non-negative real numbers as exponents and whose support is contained in a cartesian product of well-ordered subsets of $\mathbb{R}_+=\{r \geq 0\}$. It is worth to mention that this condition on the support is more restrictive (except for $n=1$) than the one used to define the Hahn ring $A((\Gamma))$, where $\Gamma$ is the group $\mathbb{R}^n$ with the lexicographic order, whose elements are also called generalized power series. Introduced and studied by van den Dries and Speissegger in \cite{vdD-Spe}, generalized power series appear in several contexts. To mention a few: as solutions of differential/functional equations; expression of the Riemann zeta-function (or, more generally, the Dirichlet series) in a logarithmic chart; as asymptotic expansions of Dulac transition maps of vector fields (see for instance \cite{Ily,Kai-Rol-Spe}); in model theory and o-minimal geometry (the paper \cite{vdD-Spe} itself or \cite{Rol-Ser}); as parametrizations of algebraic curves in positive characteristic.
	
Taking real coefficients, we have a natural notion of convergence for generalized power series, whose sums provide continuous functions on open subsets of the orthant $\mathbb{R}_+^n$, called {\em generalized analytic functions}. They are the local pieces to build abstract {\em (real) generalized analytic manifolds}, introduced and developed by Martín, Rolin and Sanz in \cite{Mar-Rol-San}. More precisely, a generalized analytic manifold is a local ringed space $\mathcal{M}=(M,\mathcal{G}_M)$, where $M$ is a topological manifold with boundary and $\mathcal{G}_M$ is a sheaf of continuous functions locally isomorphic to the sheaf of generalized analytic functions on open subsets of $\mathbb{R}_+^n$. Equivalently, as usual, $\mathcal{M}$ can be determined by an atlas of local charts that realize those local sheaves isomorphisms. 
Elements of the sheaf $\mathcal{G}_M$ are called themselves generalized analytic functions on $M$. 

\strut

The main result in \cite{Mar-Rol-San} establishes the local reduction of singularities of generalized analytic functions, in the spirit of Zariski's local uniformization theorem of algebraic varieties \cite{Zar} or Hironaka's version for analytic varieties \cite{Hir1}. The statement, formulated in analogous terms to those used in Bierstone-Milman's paper \cite{Bie-Mil} for real (standard) analytic functions, is the following:
	
\vspace{.2cm}
	
{\bf Local Monomialization Theorem \cite{Mar-Rol-San}.} Let $f$ be a generalized analytic function on $M$ and let $p\in M$. Then there exists a neighborhood $U_0$ of $p$ in $M$, finitely many sequences of {\em local blowing-ups} $\{\pi_i:\mathcal{M}_i\to U_0\}_{i=1}^r$ and compact sets $L_i\subset M_i$ satisfying that $\cup_i\pi_i(L_i)$ is a neighborhood of $p$ and such that, for every $i$, the total transform $f_i=f\circ\pi_i$ is of {\em monomial type} at every $q\in L_i$ (i.e., for some coordinates ${\bf x}=(x_1,x_2,...,x_n)$ centered at $q$, we have $f_i={\bf x}^\alpha U({\bf x})$ where $U(0)\ne 0$).
	
\vspace{.2cm}
The centers of blowing-ups in each sequence $\pi_i$ have normal crossings with the boundary but they are defined only in some open sets of the corresponding manifold.  
In the standard real analytic case, we have stronger global monomialization results (typically called {\em Reduction of Singularities}, see \cite{Aro-Hir-Vic,Bie-Mil2,Enc-Vil}) consisting, essentially, in that in the above statement, we can take just a single sequence ($r=1$) and that the centers of blowing-ups are globally defined closed analytic submanifolds, having normal crossings with the total divisor. 
	
Such a global result is not known so far for generalized analytic functions. There are two main difficulties, related to the very notion of a blowing-up morphism. In one hand, a blowing-up may depend on the local coordinates that we use to write it with the usual expressions. More intrinsically, a blowing-up is not univocally defined and dependens on the use of a {\em standardization} of the manifold (or at least of an open neighborhood of the center of blowing-up). Roughly, it is a subsheaf $\mathcal{O}_M$ of $\mathcal{G}_M$ such that $(M,\mathcal{O}_M)$ is a real analytic standard manifold and from which the sheaf $\mathcal{G}_M$ can be recovered by a natural completion adding generalized series (see \cite{Mar-Rol-San}, we recall this notion below). 	Secondly, although every generalized analytic manifold is locally standardizable, there may exists closed submanifolds, having normal crossings with the boundary, which do not admit standardizable neighborhoods; i.e., such submanifolds cannot be  ``geometric'' centers for a blowing-up (cf. \cite[Example 3.20]{Mar-Rol-San}).
	
Morally, a procedure for reduction of singularities of generalized analytic functions would need to guarantee  that, in the process, all closed centers susceptible to be blown-up that appear (for instance a submanifold contained in the set of points where the function $f$ has not normal crossing), have standardizable neighborhoods. If this is already proved and $Y$ is such a center, one needs to show furthermore that, among the different standardizations around $Y$, there exists one of them for which the corresponding blowing-up $\pi:\tilde{\mathcal{M}}\to\mathcal{M}$ fits appropriately for a given local strategy (for instance, that certain tuple of numerical invariants decreases lexicographically when calculated at any point in $\pi^{-1}(Y)$).

\strut

In this paper, we overcome these difficulties to obtain an intermediary step towards the global result, the so called {\em stratified} reduction of singularities. Let us explain it. First, we recall that, by its very definition, the boundary $\partial M$ of a generalized analytic manifold is a normal crossing divisor; i.e., $\partial M$ is locally defined at each point $p$ by an equation $x_1x_2\cdots x_e=0$ where the $x_i$ are (generalized analytic) coordinates of a local chart at $p$. Moreover, the number $e=e(p)$ of {\em components} passing through $p$ does not depend on the chart. Extending the map $e:M\to\{0,1,2,\cdots,n\}$ to take the value $e=0$ on the interior $M\setminus\partial M$ of $M$, and taking the partition of $M$ formed by the connected components of the fibers of $e$, we obtain a natural stratification of $M$ by (standard) analytic manifolds. A generalized analytic function $f:M\to\mathbb{R}$ is said to be of {\em stratified monomial type} if for any given $p\in M$, if $S$ is the stratum where $p$ belongs, there exists a local chart  $(\mathbf{x}=(x_1,x_2,...,x_e),\mathbf{y})$ centered at $p$ satisfying $S=\{x_1=x_2=\cdots =x_e=0\}$ and for which 
$$
f(\mathbf{x},\mathbf{y})=\mathbf{x}^\alpha U(\mathbf{x},\mathbf{y}),\;\mbox{ where }\alpha\in\mathbb{R}_+^e\mbox{ and }U(0,\mathbf{y})\not\equiv 0.
$$
Thus, requiring a function to be of stratified monomial type means to require that it is of monomial type only with respect to the generalized coordinates determining equations of the components of the boundary. In particular, the condition is empty if $p\not\in\partial M$. Also, it is automatic if $S$ has codimension $e=1$, taking in the above definition $\alpha$ to be the minimum of the support of the series defining $f$ with respect to the single variable $\mathbf{x}=x_1$. 
	
\strut
	
Our main result may be stated now as follows.

\begin{teo}\label{th:main}[Stratified Reduction of Singularities]
	Let $\mathcal{M}=(M,\mathcal{G}_M)$ be a generalized analytic manifold and let $f:M\to\mathbb{R}$ be a generalized analytic function. Let $p\in M$ and assume that the germ of $f$ at $p$ is not zero. Then, there exists a neihborhood $V_p$ of $p$ in $M$ and there exists a sequence of blowing-ups
	$$
	(M_r,\mathcal{G}_{M_r})\stackrel{\pi_{r-1}}{\rightarrow}
	(M_{r-1},\mathcal{G}_{M_{r-1}})\stackrel{\pi_{r-2}}{\rightarrow}
	\cdots
	\stackrel{\pi_2}{\rightarrow}
	(M_1,\mathcal{G}_{M_1})\stackrel{\pi_0}{\rightarrow}
	(V_p,\mathcal{G}_M|_{V_p})
	$$
  	such that the pull-back $\overline{f}:=f\circ\pi_0\circ\cdots\circ\pi_{r-1}\in\mathcal{G}_{M_r}(M_r)$ is of stratified monomial type. Moreover, the (support of the) center of each blowing-up $\pi_j$, $j=0,1,\ldots,r-1$ can be chosen to be the closure of a codimension two stratum in $M_{j}$, where $M_0:=V_p$.	
\end{teo}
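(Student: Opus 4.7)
The plan is to interpret the statement as a kind of principalization problem, where the substantive content is how $f$ behaves only with respect to the boundary variables. Since the stratified monomial type condition concerns only the boundary coordinates $\mathbf{x}=(x_1,\dots,x_e)$ at a point and is passive in the transverse coordinates $\mathbf{y}$, the question has a generalized-toric flavor: after suitable blowing-ups of intersections of boundary components, the $\mathbf{x}$-Newton data of $f$ should become ``monomial'' at each point, in the sense that its Newton polyhedron has a unique minimum vertex supporting a non-identically-zero coefficient.

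I would first reduce to the case $e(p)\geq 2$, since for $e(p)\in\{0,1\}$ the conclusion is automatic, as observed in the introduction. For $q$ with $e(q)\geq 2$, I expand the germ at $q$ as $f=\sum_{\beta} c_\beta(\mathbf{y})\,\mathbf{x}^\beta$, with generalized analytic coefficients $c_\beta$, and consider the effective support $\Sigma(f,q)=\{\beta:c_\beta\not\equiv 0\}$ together with its Newton polyhedron $N_\mathbf{x}(f,q)=\mathrm{Conv}(\Sigma(f,q))+\mathbb{R}_+^e$. Stratified monomial type at $q$ is equivalent to $N_\mathbf{x}(f,q)$ being a single translated orthant, i.e.\ having a unique vertex whose coefficient is non-zero.

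The main technical step is to attach to $f$ an invariant $\nu(f,q)$ (for instance the lexicographic pair consisting of the number of vertices of $N_\mathbf{x}(f,q)$ and the dimension of its minimal face supporting non-monomial initial data) that drops strictly under blowing-up of an appropriately chosen codimension-two stratum. Such strata are locally intersections $\{x_i=x_j=0\}$ of two boundary components, and I first have to check that their closures admit standardizable neighborhoods: along such a stratum only two boundary coordinates are ``active'', so a local standardization adapted to $(x_i,x_j)$ exists and the blow-up is intrinsically defined in the generalized category. The combinatorial effect of this blow-up on $N_\mathbf{x}(f,q)$ is the classical linear substitution $(\beta_i,\beta_j)\mapsto(\beta_i+\beta_j,\beta_j)$ in one chart (and its symmetric version in the other), and by standard toric principalization one can choose, at each stage, a pair $(i,j)$ for which $\nu$ strictly decreases at every point above $q$.

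The obstacle I expect to be hardest is the globalization. The Local Monomialization Theorem of \cite{Mar-Rol-San} supplies only a non-coherent, patchwise resolution, whereas Theorem \ref{th:main} asks for a single globally defined sequence whose centers are closures of codimension-two strata. My plan to handle this is to show that $\nu(f,\cdot)$ is upper semicontinuous on a small enough $V_p$ and that its maximum locus is a finite union of codimension-two strata (this is the key geometric claim and where the use of standardizability will be essential), so that one can choose as successive centers the closures of the components of this maximum locus. Termination is then guaranteed by the Local Monomialization Theorem, since it forces $\nu$ to take only finitely many values near $p$ and each blowing-up strictly reduces its maximum value. Once $\nu$ vanishes identically on $V_p$, the pull-back $\overline{f}$ is of stratified monomial type and the theorem is proved.
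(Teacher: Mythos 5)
Your overall framework --- reducing to boundary-variable Newton data, seeking a principalization statement, and working with codimension-two boundary intersections as centers --- matches the paper's strategy in spirit. However, there are two genuine gaps that would make your plan fail as written.

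First, the substitution $(\beta_i,\beta_j)\mapsto(\beta_i+\beta_j,\beta_j)$ that you invoke is \emph{not} what a blowing-up produces in the generalized category, and more importantly, taking it for granted hides the very feature the proof must exploit. Blowing-ups here depend on the choice of standardization, and with standardization exponents $\alpha_i,\alpha_j\in\mathbb{R}_{>0}$ the substitution becomes $(\beta_i,\beta_j)\mapsto(\beta_i + (\alpha_j/\alpha_i)\beta_j,\ \beta_j)$ in one chart (and the symmetric version in the other). This is not a nuisance to be quotiented out: it is the essential extra freedom the paper uses. With incommensurable real exponents the ``standard toric principalization'' algorithm (in the sense of Euclidean-style reduction of multiplicities) need not terminate, so a literal transposition of the standard argument breaks down. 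The paper resolves this by choosing, at each blow-up, an m-standardization \emph{adapted} to the pair of generators with respect to the uncoupled center $Z$, meaning the weights $\alpha$ are tuned so that after the blow-up the two exponent vectors coincide in the new $\infty$-coordinate at every corner point above $Z$. This makes the invariant --- which in the paper is simply the finite count of ``uncoupled'' codimension-two centers --- drop by exactly one per blow-up, with no new uncoupled centers created on the exceptional divisor. Your lexicographic vertex-count invariant is a reasonable alternative, but without the adaptive choice of standardization you have no mechanism ensuring it drops at every point above $q$ simultaneously; the substitution you describe would, in general, create new non-principal corners on $E_\infty$.

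Second, your termination argument via the Local Monomialization Theorem is not sound. That theorem produces finitely many \emph{local} sequences of local blowing-ups with compact fragments covering a neighborhood of $p$; it does not bound a globally defined invariant nor does it imply that a single sequence of global blow-ups converges. The paper does not use the Local Monomialization Theorem at all for termination; termination is built in because the invariant is a non-negative integer that decreases strictly. Relatedly, your claim that ``along such a stratum only two boundary coordinates are active, so a local standardization $\ldots$ exists and the blow-up is intrinsically defined'' is misleading: blowing-ups in the generalized category are never intrinsic (they depend on the standardization), and the existence of \emph{global} standardizations over the whole ambient monomial manifold (Proposition~\ref{prop:extension}) with prescribed local behavior is itself one of the nontrivial results the paper needs. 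Finally, you would still need to justify why principalization of the $\mathbf{x}$-Newton ideal implies stratified monomial type of the function (the paper's Lemma~\ref{lema:m-idealimplicareduction} and Lemma~\ref{lema:estabilidadhorizontal}) and how to reduce a non-corner point to a corner point (the paper's product argument $\mathcal{W}\times\bar{\mathcal{M}}_r$); both are silently assumed in your sketch.
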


The proof of Theorem~\ref{th:main} in this paper is constructive in the sense that each center, as well as the standardization used to blow-up it at each step, can be given explicitly in terms of the expression of $f$ in some initial coordinates of $\mathcal{M}$ at $p$. Moreover, each blowing-up morphism is locally expressed as a purely monomial map between to domains of $\mathbb{R}^n_+$ in convenient charts. Consequently, all the process of stratified reduction of singularities can be described using only combinatorics from the starting data given simply by the minimal support of a generalized power series representing $f$ at $p$. The datum of minimal support is analogous to that of the Newton polyhedron of a function in the standard analytic case and therefore, our result should be compared with the combinatorial reduction of singularities stated in Molina's paper \cite{Mol}. Although it has been a source of inspiration for us, we cannot apply directly the results in that paper, mostly because there is no good notion of ``multiplicity'' in the generalized non-standard situation (any power function with positive real exponent in a generalized variable is a genuine change of variables).  
	
We want to observe that Theorem~\ref{th:main} is already proved for $\dim M=3$ in Palma's paper \cite{Pal}, but with a different strategy for the choice of the sequence of blowing-ups (for instance, the centers of blowing-ups may be either corner points or closures of one-dimensional strata).
	
	\strut
	
The paper is structured as follows. 
	
In section 2 we summarize the basic notions and properties of generalized power series and of the category of generalized analytic manifolds from the mentioned references \cite{vdD-Spe} and \cite{Mar-Rol-San}. We emphasize the notion of standardization, which is crucial to define blowing-ups, for which our approach is slightly different (but equivalent) to the original one from \cite{Mar-Rol-San}.  
	
In section 3 we introduce the category of {\em monomial (generalized analytic) manifolds}, which has many combinatorial properties. The objects of this 	subcategory are those  generalized analytic manifolds, having at least one corner, and equipped with an atlas of local charts at every corner point for which the change of coordinates is expressed as a monomial map between domains of the local model $\mathbb{R}^n_+$. We represent these changes of coordinates by means of a family of matrices of exponents, a combinatorial data which codifies univocally the structual sheaf of the manifold. We define also the subclass of monomial morphisms between monomial manifolds (those that can be expressed as monomial maps in the distinguished atlases) and the class of monomial standardizations of monomial manifolds. These last notion is important because, after a blowing-up using such a standardization with a center which is the closure of a stratum (a so-called {\em combinatorial center}), the new space obtained is again a monomial manifold and the blowing-up morphism is a monomial one. The main result in this section is the abundance of monomial standardizations in a monomial manifold (Proposition \ref{prop:extension}). Furthermore, we have always a global monomial standardization whose expression at a given fixed corner point in the distinguished atlas is prescribed a priori. This permits us, not only to be able to perform a blowing-up with a desired center, but, moreover, to have a prescribed local expression at a given corner point, so that, local strategies of reduction of singularities are susceptible to be ``globalized''. We end this section by introducing a special class of monomial  manifolds, those obtained from a given monomial manifold by a sequence of blowing-ups with combinatorial centers using only monomial standardizations. Such a sequence is called a {\em monomial star} and the family of such stars is called the {\em monomial ``voûte étoilée''}, a terminology that evokes the one introduced by Hironaka in \cite{Hir1, Hir2} for sequences of local blowing-ups in complex analytic geometry.
	
In section 4 we provide a proof of the main Theorem~\ref{th:main}. First, we prove a result of {\em principalization} of monomial finitely generated ideal sheaves in a given monomial manifold. This result (see Theorem \ref{teo:principalizacion})) can be seen as a version for our category of monomial generalized analytic manifolds of a well known result on principalization of ideals in the algebraic or standard analytic situation (see for instance Goward's paper \cite{Gow} for a simple proof, or see also Fernández-Duque's paper \cite{Fer} for a similar statement concerning the resonances elimination for singularities of codimension-one analytic foliations). It should be mention that, taking into account that it suffices to obtain the principalization only at the corner points (cf. above that a function is of stratified monomial type iff it is so at corner points), such a result can also be regarded as a globalization of the algorithm described in vdDries and Speissegger's paper (see \cite[Lemma 4.10]{vdD-Spe}) that reduces the number of elements in the minimal support of a generalized power series by monomial transformations (which represent the local expressions of a local blowing-up with a codimension two center). Although we use certain elements and arguments of that result, and despite of what we have said above concerning the possibility to globalize a ``local strategy'', our proof here uses a different control invariant to achieve: we need to prove that this invariant drops at {\em every} new point after a blowing-up, not only at those in a fiber of the blowing-up morphism over a fixed corner point of the blown-up center.  
	
Once we have the principalization of monomial ideals, the main theorem is concluded easily in the case we start with a point $p$ which is a corner point of the original manifold $\mathcal{M}$. In this case, the sequence $\pi_0\circ\pi_1\circ\cdots\pi_{r-1}$ for Theorem~\ref{th:main} is actually a star in the voûte étoilée over the germ of $M$ at $p$. Finally, the general case $p\in\partial M$ is reduced to the case of a corner point, using the fact that, in general, there is a neighborhood of $p$ with a product structure of a neighborhood of a corner point times an standard analytic manifold without boundary.

\section{Preliminaries}\label{sec:preliminaries}
We summarize here the basic notions about the category of generalized analytic manifolds and blowing-up morphisms in it, introduced by Martín, Rolin and Sanz in \cite{Mar-Rol-San}.  These manifolds are built from convergent generalized power series, extensively studied in a paper by van den Dries and Speissegger \cite{vdD-Spe}.

\subsection{Formal and Convergent Generalized Power Series}
Denote by $\mathbb{R}_+=[0,\infty)$. Tuples of variables are denoted by $X,Y,Z$, etc., and we implicitly assume that tuples with different name have no common variables. If $X$ has $n$ components, we say that $X$ is an $n$-tuple and so on.

Let $X=(X_1,X_2,\ldots,X_n)$ be an $n$-tuple of variables and let $A$ be an integral domain. A {\em formal generalized power series with coefficients in $A$} in the variables $X$ is a map $s:\mathbb{R}_+^n\to A$, written as 
$$
s=\sum\limits_{\lambda\in\mathbb{R}_{+}^n}s_\lambda X^\lambda,\quad X^\lambda=X_1^{\lambda_1}X_2^{\lambda_2}\cdots X_n^{\lambda_n},\;\lambda=(\lambda_1,\lambda_2,\ldots,\lambda_n), \quad s_\lambda=s(\lambda) \in A,
$$ 
such that its \emph{support} $\rm{Supp}(s):=\{\lambda \in \mathbb{R}^n_+: \;s_\lambda\ne 0\}$ is contained in a cartesian product of $n$ well-ordered subsets of $\mathbb{R}$. The set of all such formal generalized power series, denoted by $A[[X^*]]$, with the usual addition and product operations of power series has an structure of $A$-algebra which is also an integral domain. Moreover, if $A$ is a field, then $A[[X^*]]$ is a local algebra (see \cite[Corollary 5.6]{vdD-Spe}), with maximal ideal given by $\mathfrak{m}=\{s \in  A[[X^*]]: \;s_0=0\}$. Note that $A[[X^*]]$ is not noetherian, in fact, the ideal $\mathfrak{m}$ is not finitely generated.

The {\em minimal support} of a power series $s\in A[[X^*]]$ is the subset $\rm{Supp}_{min}(s)\subset \rm{Supp}(s)$ composed of the minimal tuples of $\mathbb{R}^n_{+}$ with respect to the (partial) {\em division order $\leq_d$}, that is
$(\lambda_1,\lambda_2,\ldots,\lambda_n)\leq_d(\mu_1,\mu_2,\ldots,\mu_n)$ if and only if $\lambda_i\leq\mu_i$, for all $i\in \{1,2,\ldots,n\}$.

The condition imposed to the support of a power series $s$ allows to show that the minimal support $\rm{Supp}_{min}(s)$ is finite (see \cite[Lemma 4.2]{vdD-Spe}). As a consequence, $s$ admits a finite {\em monomial presentation}:
$$
s=\sum\limits_{\lambda\in\rm{Supp}_{\min}(s)}X^\lambda\,U_\lambda(X),
$$
where $U_\lambda\in A[[X^*]]$ satisfies $U_\lambda(\mathbf{0})\ne 0$, for any $\lambda \in\rm{Supp}_{\min}(s)$. Denote by $m(s)=\#\rm{Supp}_{\min}(s)$. When $m(s)=1$ or, equivalently, the monomial representation of $s$ has a single term, we say that $s$ is of {\em monomial type}.

\strut

In this paper, we are interested in real generalized power series, that is $A=\mathbb{R}$, but we use different rings when we want to distinguish some variables and put the others into the coefficients. To be precise, if $Y$ and $Z$ are tuples of $k$ and $n-k$ variables, respectively, we consider $\mathbb{R}[[(Y,Z)^*]]$ as a proper $\mathbb{R}$-subalgebra of $\mathbb{R}[[Y^*]][[Z^*]]$ by the natural monomorphism 
\begin{equation}\label{eq:monomorfismo}
s=\sum\limits_{(\lambda,\mu)\in\mathbb{R}_{+}^{n}}a_{\lambda\mu}Y^\lambda Z^\mu\;\mapsto\;
s^Z=\sum\limits_{\mu\in\mathbb{R}_{+}^{n-k}}A_\mu Z^\mu, \quad A_\mu=\sum_{\lambda\in \mathbb{R}^k_+} a_{\lambda\mu}Y^\lambda. 
\end{equation}
If $\text{pr}:\mathbb{R}^{n}\to\mathbb{R}^{n-k}$ denotes the natural projection onto the last $n-k$ coordinates, for any power series $s\in\mathbb{R}[[(Y,Z)^*]]$ we have the inclusion $\text{Supp}_{\min}(s^Z)\subset\text{pr}(\text{Supp}_{\min}(s))$, and as a consequence we get the inequality
\begin{equation} \label{eq:soporteminimoproyeccion}
m(s^Z)\leq m(s).
\end{equation}

Let us write $\mathbb{R}[[Y,Z^*]]$ to denote the subalgebra of the so-called {\em real mixed power series}: those 
formal real generalized power series $s$ in the variables $(Y,Z)$, such that the inclusion $\text{Supp}(s)\subset\mathbb{N}^k\times\mathbb{R}_+^{n-k}$ holds, or equivalently, such that $s^Z\in\mathbb{R}[[Y]][[Z^*]]$. 

\strut

Given an $n$-tuple of variables $X$ and a polyradius $\rho=(\rho_1,...,\rho_n)\in\mathbb{R}_{>0}^n$, denote by $\mathbb{R}\{X^*\}_\rho$ the subalgebra of $\mathbb{R}[[X^*]]$ consisting in those power series $s$ for which
$$
\|s\|_\rho: =\sum_{\lambda\in\text{Supp}(s)}|s_\lambda|\rho^\lambda<\infty.
$$
The union of the $\mathbb{R}\{X^*\}_\rho$ along all the possible polyradius $\rho\in\mathbb{R}_{>0}^n$ is again a subalgebra $\mathbb{R}\{X^*\} \subset \mathbb{R}[[X^*]]$, and its elements are called \emph{(real) convergent generalized power series}. We have that $\mathbb{R}\{X^*\}$ is also a local algebra, whose maximal ideal is given by $\mathfrak{m}\cap \mathbb{R}\{X^*\}$. If $Y, Z$ are tuples of $k$ and $n-k$ variables, respectively, and $\rho\in\mathbb{R}^n_{>0}$ is a polyradius, an element $s\in\mathbb{R}[[Y,Z^*]]\cap\mathbb{R}\{(Y,Z)^*\}_\rho$ gives rise to a continuous function 
\begin{equation}\label{eq:serieconvergente}
\begin{array}{cccc}
	f_s:&P_{k,n-k}^\rho
	&\to&\mathbb{R} \\
	&x=(x_1,x_2,\ldots,x_n)&\mapsto&\sum_{\lambda}s_\lambda x^\lambda,
\end{array}
\end{equation}
where $P_{k,n-k}^\rho=(-\rho_1,\rho_1)\times(-\rho_2,\rho_2)\times\cdots\times(-\rho_k,\rho_k)\times
[0,\rho_{k+1})\times\cdots\times[0,\rho_{n})\subset\mathbb{R}^k\times\mathbb{R}_+^{n-k}$, called the {\em sum of the power series $s$}. Moreover, $f_s$ is real analytic at any point in the interior of $P_{k,n-k}^\rho$ and its germ at $\mathbf{0}\in\mathbb{R}^n$ is univocally determined by the series $s$. We define the \emph{convergent mixed power series} to be the elements of $\mathbb{R}\{Y,Z^*\}=\mathbb{R}[[Y,Z^*]]\cap\mathbb{R}\{(Y,Z)^*\}$.
%

\subsection{Standard and Generalized Analytic Manifolds}
%

Let $V$ be an open subset of $\mathbb{R}^n_{+}$ and let $g:V\to\mathbb{R}$ be a continuous function. Given a point $p=(p_1,p_2,\ldots,p_n)\in V$, consider $I_p=\{i:\,p_i=0\} \subset \{1,2,\ldots,n\}$, and put $\ell=\#I_p$ and $k=n-\ell$. We say that $g$ is \emph{generalized analytic} (or just \emph{$\mathcal{G}$-analytic}) at $p$ if there exists $s\in\mathbb{R}\{Y,Z^*\}$, where $Y$ is a $k$-tuple and $Z$ is an $\ell$-tuple, such that for any $x=(x_1,x_2,\ldots,x_n)$ in a sufficiently small neighbourhood of $\mathbf{0}\in\mathbb{R}^k \times \mathbb{R}_{+}^\ell$, we have
$$
g(p_1+x_1,p_2+x_2,...,p_n+x_n)=f_s(x_{\sigma(1)},x_{\sigma(2)},\ldots,x_{\sigma(n)}),
$$
where $\sigma$ is a permutation of the set $\{1,2,\ldots,n\}$ such that $\sigma(\{k+1,k+2,\ldots,n\})=I_p$.
We say that $g$ is {\em  generalized analytic in $V$} if it so at every point $p$ in $V$. In the definition above, the series $s$ is univocally determined by the 
germ of $g$ at $p$, up to permutation of the variables $Y$ and $Z$ separately. Thus, the set of germs of generalized analytic functions at $p$ defines an $\mathbb{R}$-algebra isomorphic to $\mathbb{R}\{Y,Z^*\}$. On the other hand, if $g$ is a generalized analytic function at some point $p \in \mathbb{R}^n_{+}$, then it is so in a neighbourhood of $p$ in $\mathbb{R}^n_{+}$. 
Summarizing, the assignment $\mathcal{G}_n:V\mapsto\mathcal{G}_n(V)$, where $V$ is an open subset of $\mathbb{R}^n_{+}$ and $\mathcal{G}_n(V)$ is the set of  generalized analytic functions in $V$, is a sheaf of $\mathbb{R}$-algebras of continuous functions over $\mathbb{R}^n_{+}$, where the stalks $\mathcal{G}_{n,p}$ are local algebras. 
Moreover $\mathcal{G}_n$ contains the sheaf $\mathcal{O}_n$ of analytic functions, where $\mathcal{O}_n(V)$ is the $\mathbb{R}$-algebra of real functions in $V$ which extend to real analytic functions on some open neighborhood of $V$ in $\mathbb{R}^n$.

\strut

With this formalism, and taking as local models the locally ringed spaces $\mathbb{O}_n:=(\mathbb{R}^n_{+},\mathcal{O}_n)$ and $\mathbb{G}_n:=(\mathbb{R}^n_{+},\mathcal{G}_n)$, we define both, the categories of \emph{standard and generalized (real) analytic manifolds (with boundary and corners)}. The objects in these categories are called \emph{$\mathcal{O}$-manifolds} and \emph{$\mathcal{G}$-manifolds}, respectively. In order to treat both together we write $\mathcal{A}$ to make reference either to $\mathcal{O}$ or to $\mathcal{G}$, and $\mathbb{A}$ to refer either to $\mathbb{O}$ or $\mathbb{G}$. An $\mathcal{A}$-manifold of dimension $n$ is a locally ringed space $\mathcal{M}=(M,\mathcal{A}_M)$, where $M$ is a second contable Hausdorff topological space (the {\em underlying space}),  $\mathcal{A}_M$ is a subsheaf of the sheaf $\mathcal{C}^0_M$ of germs of continuous real functions on $M$ (the {\em structural sheaf}), and it is locally isomorphic to the local model $\mathbb{A}_n$. That is, given $p\in M$ there is an open neighborhood $V$ of $p$ in $M$, an open subset $U$ of $\mathbb{R}^n_{+}$ and a homeomorphism $\varphi:V\to U$ inducing an isomorphism of the locally ringed spaces 
	$$
	(\varphi,\varphi^\#):(V,\mathcal{A}_{M}|_{V})\stackrel{\sim}{\longrightarrow}(U,\mathcal{A}_{n}|_{U}),
	$$
where $\varphi^\#_{p}:\mathcal{A}_{n,\varphi(p)}\to \mathcal{A}_{M,p}$ is given by the germ at $p$ of the composition $g \mapsto g \circ \varphi$.
A morphism between two $\mathcal{A}$-manifolds is just a morphism as locally ringed spaces, 
induced by composition with continuous maps on the underlying spaces (with an abuse of language, we frequently identify morphisms with the corresponding continuous maps). 
A couple $(V,\varphi)$ in the above conditions is called a {\em local chart} of $\mathcal{M}$ at $p$, the components $\mathbf{x}=(x_1,x_2,\ldots,x_n)$ of the isomorphism $\varphi:V \to U$ are {\em local coordinates at $p$}, and a family of local charts $\{(V_j,\varphi_j)\}_{j\in J}$ such that $M=\cup_{j\in J}V_j$ is an {\em atlas} of $\mathcal{M}$.

\strut

Let $\mathcal{M}=(M,\mathcal{A}_M)$ be an $\mathcal{A}$-manifold. Note that the underlying space $M$ is a topological manifold with boundary, denoted by $\partial M$, and that the restriction $(M\setminus\partial M,\mathcal{A}_M|_{M\setminus\partial M})$ is a standard analytic manifold without boundary (consequently, generalized analytic manifolds without boundary are also standard). Also there is a natural stratification $\mathcal{S}_{\mathcal{M}}$ of $\mathcal{M}$ described as follows. If $p\in M$, and $(V,\varphi)$ is a local chart at $p$, the number $e_p$ of vanishing coordinates in $\varphi(p)$ (equal to $\# I_{\varphi(p)}$) does not depend on the local chart $(V,\varphi)$ chosen (see \cite{Mar-Rol-San}). 
In that way, there is a well-defined map
$$
e:M\to\{0,1,\ldots,n\}, \quad\, p \mapsto e_p,
$$
which is upper semi-continuous. The elements of $\mathcal{S}_{\mathcal{M}}$ are the connected components of the fibers of $e$. Given $S \in \mathcal{S}_{\mathcal{M}}$, let us write $e_S=e(p)$, where $p$ is any point in $S$. Observe that $(S,\mathcal{A}_\mathcal{M}|_{S})$ is a standard analytic manifold of dimension $e_S$. In particular, the boundary $\partial M$ corresponds exactly with the points $p \in M$ with $e_p >0$, that is, $\partial M$ is equal to the union of strata of dimension strictly smaller than $n$. We have also that, $\partial M$ is a \emph{normal crossings divisor} with respect to the structural sheaf. That is, for each $p\in \partial M$, there exists a local chart $(V,\varphi)$ of $\mathcal{M}$ at $p$ such that
$$
\partial M \cap V =\{q\in V: \; x_1(q)\cdot x_2(q)\cdots x_{e_p}(q)=0\},
$$
where $(x_1,x_2,\ldots,x_n)$ are the coordinates associated to $\varphi$.

\begin{ejem} \label{ejem:traslacion}
	Let $\bar{\mathcal{O}}_k$ the sheaf of real (standard) analytic functions in $\mathbb{R}^k$. The locally ringed space $(\mathbb{R}^k,\bar{\mathcal{O}}_k)$ is in a natural way a generalized and standard analytic manifold, by means of the homeomorphism $\psi_k:\mathbb{R}^k \to (0, \infty)^k$ given by $(a_1,a_2,\ldots,a_k) \mapsto (e^{a_1},e^{a_2},\cdots,e^{a_k})$. 
\end{ejem}

We observe at this point that the product is defined in the category of $\mathcal{A}$-manifolds. That is, given two generalized or standard analytic manifolds $\mathcal{M}_1=(M_1,\mathcal{A}_{M_1})$ and $\mathcal{M}_2=(M_2,\mathcal{A}_{M_2})$ of dimensions $n$ and $m$, respectively, there is a natural $\mathcal{A}$-manifold of dimension  $n+m$, that we denote by $\mathcal{M}_1 \times \mathcal{M}_2=(M_1\times M_2,\mathcal{A}_{{M_1}\times{M}_2})$, unique up to isomorphism, solving the ``product universal property''. Without too much detail, the sheaf $\mathcal{A}_{{M_1}\times{M}_2}$ is recovered as follows. Given a point $(p,q)\in M_1\times M_2$ and two coordinate charts $\varphi_1:V_1\to U_1$ and $\varphi_2:V_2\to U_2$ at $p$ and $q$ respectively, we have that 
$$
\mathcal{A}_{{M_1\times M_2},(p,q)}=\{f\circ (\varphi_1\times \varphi_2)_{(p',q')}: \; f \in \mathcal{A}_{n+m,(p',q')}\},
$$
where $(p',q')=(\varphi_1(p),\varphi_2(q))$.
\begin{ejem}  \label{ejem:estructuramixta}
The product $(\mathbb{R}^k,\bar{\mathcal{O}}_k) \times (\mathbb{R}_+^{n-k},\mathcal{A}_{n-k})$, where $\mathcal{A} \in \{\mathcal{O},\mathcal{G}\}$, has a natural structure of $\mathcal{A}$-manifold by means of the homeomorphism $\psi_{k}\times \text{id}$, where $\psi_k$ has been introduced in Example \ref{ejem:traslacion}. We refer to this product by writing $(\mathbb{R}^k \times \mathbb{R}_+^{n-k}, \mathcal{A}_{k,n-k})$.
\end{ejem}
\begin{rem}  \label{rem:coordmixtas}
Let us consider a point $p\in M$ with $e_p=k$ and let $(V,\varphi)$ be a local chart of $\mathcal{M}$ at $p$. Up to permutation, we can assume that $\varphi(p)=(a_1,a_2,\ldots,a_k,0,\ldots,0)$ with $a_i \ne 0$ for all $i\in \{1,2,\ldots,k\}$. We can split the local coordinates $\mathbf{x}$ defined by $\varphi$ in two groups $\mathbf{x}=(\mathbf{y},\mathbf{z})$, where $\mathbf{y}=(y_1,y_2,\ldots,y_k)$ are standard analytic functions at $p$ and $\mathbf{z}=(z_{k+1},z_{k+2},\ldots,z_n)$ are generalized coordinate functions. By means of translations $y_i'=y_i-a_i$ in the analytic coordinates we obtain a new isomorphism
$$
\varphi': V \mapsto (\psi_{k}\times \text{id})^{-1}(\varphi(V)) \subset  \mathbb{R}^k \times \mathbb{R}_{+}^{n-k}.
$$
We consider also $\varphi'$ as a \emph{coordinate chart centered at $p$} in the sense that $\varphi'(p)=\mathbf{0} \in \mathbb{R}^k \times \mathbb{R}_+^{n-k}$, and we usually assume that our charts are centered charts.
\end{rem}

Let us recall now the expression in coordinates of the continuous maps inducing morphisms of generalized functions (details in \cite[Prop 3.16]{Mar-Rol-San}). Consider two generalized analytic manifolds $\mathcal{M}_1=(M_1,\mathcal{G}_{M_1})$ and $\mathcal{M}_2=(M_2,\mathcal{G}_{M_2})$ and a continuous function $\phi:M_1 \to M_2$ inducing a morphism between $\mathcal{M}_1$ and $\mathcal{M}_2$. Given $p\in M_1$ and $q=\phi(p)\in M_2$, take $(V_p,\varphi_p)$, $(W_q,\psi_q)$ charts centered at $p$ and $q$, respectively. Following notation in Remark \ref{rem:coordmixtas}, denote by $(\mathbf{y},\mathbf{z})$ the $k$ standard and $n-k$ generalized coordinates defining $\varphi_p$. Up to permutation, we can assume also that the first $k'$ coordinates defining $\psi_q$ are standard and the other $n'-k'$ are generalized. Then, the $j$-th component $\tilde{\phi}_j$ of $\tilde{\phi}=\psi_q \circ \phi \circ \varphi_p^{-1}$ is a generalized analytic function and for $j={k'+1},k'+2,\ldots,n'$, we have that
\begin{equation} \label{eq:morfimoslocal}
	\tilde{\phi}_j=\mathbf{z}^{\lambda_j}U_j(\mathbf{y},\mathbf{z}), \quad U_j(\mathbf{0},\mathbf{0})\ne 0, \quad \lambda_j \in \mathbb{R}_{+}^{n-k} \setminus \{0\}.
\end{equation}
Moreover, if $\phi$ induces an isomorphism, we have that $\phi$ is a homeomorphism, $n=n'$, $k=k'$, the map $\mathbf{t}\in\mathbb{R}^k\mapsto (\tilde{\phi}_1(\mathbf{t},0),\tilde{\phi}_2(\mathbf{t},0),\ldots,\tilde{\phi}_k(\mathbf{t},0))$ is an analytic isomorphism, and, if we write $\lambda_j=(\lambda_{j,1},\lambda_{j,2},\ldots,\lambda_{j,n-k})$ in Equation \eqref{eq:morfimoslocal}, up to a permutation of coordinates $\mathbf{z}$ we have
\begin{equation} \label{eq:isomorfimoslocal}
	\lambda_{j,\ell}=0, \quad \ell \in \{1,2,\ldots,n-k\}\setminus\{j-k\},
\end{equation}
for all $j={k+1},k+2,\ldots,n$.

\strut

We end this section introducing some notations and definitions about the strata of the natural stratification $\mathcal{S}_{\mathcal{M}}$. Given a stratum $S$ in $\mathcal{S}_{\mathcal{M}}$, denote by $\overline{S}$ the closure of $S$ in $M$, and define $\dim (\bar{S}):=\dim (S)$. We write $\mathcal{Z}_{\mathcal{M}}:=\{ \overline{S} \subset M: \; S \in \mathcal{S}_{\mathcal{M}}\}$. For $j=0,1,\ldots,n$, denote by $\mathcal{Z}^j_{\mathcal{M}}$ the set of elements in $\mathcal{Z}_{\mathcal{M}}$ with codimension $j$, that is 
$$
\mathcal{Z}^j_{\mathcal{M}}=\{\bar{S} \in \mathcal{Z}_{\mathcal{M}}: \; e_S=n-j\}
$$
The elements of $\mathcal{Z}_{\mathcal{M}}^0$ coincide with the strata of dimension 0 and are called {\em corner points}, the elements of $\mathcal{Z}_{\mathcal{M}}^1$ are called {\em edges} and the elements of $\mathcal{Z}_{\mathcal{M}}^{n-1}$ are called {\em components} of $\partial M$. Note that $\partial M$ is the union of its components. 

For each $Z \in \mathcal{Z}_{\mathcal{M}}$, we denote by $\mathcal{Z}_{\mathcal{M}}(Z)$ to the subset of $\mathcal{Z}_{\mathcal{M}}$ whose elements are contained on $Z$, and we write $\mathcal{Z}^j_{M}(Z)=\mathcal{Z}_{\mathcal{M}}(Z) \cap \mathcal{Z}_{\mathcal{M}}^j$, for each $j=0,1,\ldots,n$. We usually write for short $p\in \mathcal{Z}_{\mathcal{M}}^0$ instead of $\{p\}\in \mathcal{Z}_{\mathcal{M}}^0$, and when no confusion arises, we will write $\mathcal{Z}$ instead of $\mathcal{Z}_\mathcal{M}$, $\mathcal{Z}^j$ instead of $\mathcal{Z}_\mathcal{M}^j$, etc.

\subsection{Monomial Complexity along Strata}

We introduce in this section the concept of monomial complexity along a stratum and the definition of stratified monomial type function.

\strut

Let us consider a generalized analytic manifold $\mathcal{M}=(M,\mathcal{G}_M)$ and a stratum $S$ of its natural stratification $\mathcal{S}_{\mathcal{M}}$. Take a local chart $(V,\varphi)$ of $\mathcal{M}$ centered at $p\in S$, write $e=e_S$ and $k=\dim S=n-e$.
We can split the coordinates defining $\varphi$, up to reorder them, as $(\mathbf{y}, \mathbf{z})$, where $\mathbf{y}=(y_{1},y_{2},\ldots,y_{k})$ are standard analytic coordinates in $S \cap V$ and $\mathbf{z}=(z_1,z_2,\ldots,z_e)$ are generalized functions such that 
$
S \cap V=\{q \in V: \;z_1(q)=z_2(q)=\cdots=z_e(q)=0\}.
$
Shrinking $V$ if it is necessary, this chart $\varphi$ provides an isomorphism
$$
\Psi_\varphi^S: \mathbb{R}\{Y, Z^*\} \to \mathcal{G}_{M}(V), \quad s\mapsto f_s\circ \varphi
$$
where $Y$ and $Z$ are $k$ and $e$ tuples, respectively, and $f_s$ is the sum of the power series $s$ that has been introduced in Equation \eqref{eq:serieconvergente}.
Given $f\in \mathcal{G}_{M}(V)$ and $s\in \mathbb{R}\{Y,Z^*\}$ the mixed power series such that $\Psi^S_\varphi(s)=f$, we denote
\begin{equation} \label{eq:soporteS}
	\text{Supp}_S(f;\varphi)=\text{Supp}(s^Z) \subset \mathbb{R}^{e}_+,\quad \text{Supp}_{\min,S}(f;\varphi)=\text{Supp}_{\min}(s^Z) \subset \mathbb{R}^{e}_+,
\end{equation}
where $s^Z\in \mathbb{R}\{Y\}\{Z^*\}$ has been introduced in Equation \eqref{eq:monomorfismo}.

\begin{lema} \label{lema:soporteminimocambiocarta}
Let $S$ be a stratum in $\mathcal{S}_\mathcal{M}$ with $e=e_S$. Take an open subset $U$ of $M$ such that $U \cap S \ne \emptyset$ and a function $f\in \mathcal{G}_M(U)$. Let us consider two local charts $(V_1,\varphi_1)$, $(V_2,\varphi_2)$ centered at $p$ and $q$ respectively, with $p,q\in S \cap U$. There exists a vector $(\gamma_{1},\gamma_{2},\ldots,\gamma_e)\in \mathbb{R}^{e}_{>0}$ such that $(\lambda_1,\lambda_2,\ldots,\lambda_e) \in \text{Supp}_{\min,S}(f;\varphi_2)$ if and only if $(\gamma_{1}\lambda_{1},\gamma_{2}\lambda_{2},\ldots,\gamma_e\lambda_e) \in\text{Supp}_{\min,S}(f;\varphi_1)$. 
\end{lema}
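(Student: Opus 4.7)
The plan is to reduce the statement to the case $p=q$ and then extract the vector $(\gamma_1,\ldots,\gamma_e)$ from the local expression \eqref{eq:isomorfimoslocal} of an isomorphism of generalized analytic germs. The reduction rests on the observation that, for a fixed chart $(V,\varphi)$, the minimal support $\text{Supp}_{\min,S}(f;\varphi)$ is independent of the chosen center $p'\in S\cap V$: re-centering amounts to precomposing $\varphi$ with a translation in the analytic $\mathbf{y}$-variables alone, so the mixed series $s\in\mathbb{R}\{Y,Z^*\}$ representing $f$ is replaced by $s(Y+\mathbf{a},Z)$ with $\mathbf{a}=\mathbf{y}(p')$, and $A_\mu(Y+\mathbf{a})\not\equiv 0$ if and only if $A_\mu\not\equiv 0$. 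Since the stratum $S$ is connected by definition, one can cover a path from $p$ to $q$ in $S\cap U$ by finitely many charts and compose the scaling vectors supplied by the $p=q$ case at each overlap; this reduces the lemma to the situation where both $\varphi_1$ and $\varphi_2$ are centered at a common point.

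Assume then $p=q$. The map $\Phi:=\varphi_2\circ\varphi_1^{-1}$ is an isomorphism of generalized analytic germs at $\mathbf{0}\in\mathbb{R}^k\times\mathbb{R}_+^e$, and after reordering the $\mathbf{z}$-coordinates canonically by the components of $\partial M$ through $S$, equation \eqref{eq:isomorfimoslocal} gives
\[
\Phi(\mathbf{y},\mathbf{z})=\bigl(H(\mathbf{y},\mathbf{z}),\,z_1^{\gamma_1}U_1(\mathbf{y},\mathbf{z}),\ldots,z_e^{\gamma_e}U_e(\mathbf{y},\mathbf{z})\bigr),
\]
with $\gamma_j>0$ and $U_j(\mathbf{0},\mathbf{0})\neq 0$ for every $j$. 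If $s$ and $s'$ denote the mixed series representing $f$ in the charts $\varphi_1$ and $\varphi_2$ respectively, then $s=s'\circ\Phi$, and substituting $(s')^{Z'}=\sum_{\nu}A'_\nu(Y')(Z')^\nu$ into this identity gives
\[
s(\mathbf{y},\mathbf{z})=\sum_{\nu}A'_\nu\bigl(H(\mathbf{y},\mathbf{z})\bigr)\Big(\prod_{j=1}^e U_j(\mathbf{y},\mathbf{z})^{\nu_j}\Big)\,\mathbf{z}^{\gamma\cdot\nu},\qquad \gamma\cdot\nu:=(\gamma_1\nu_1,\ldots,\gamma_e\nu_e).
\]

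The key step is then to prove $\text{Supp}_{\min}(s^Z)=\gamma\cdot\text{Supp}_{\min}((s')^{Z'})$. For $\nu\in\text{Supp}_{\min}((s')^{Z'})$, the coefficient of $\mathbf{z}^{\gamma\cdot\nu}$ in $s^Z$ equals $A'_\nu(H(Y,0))\prod_j U_j(Y,0)^{\nu_j}$ plus contributions from other $\nu'\in\text{Supp}((s')^{Z'})$. Any such contribution would require $\gamma\cdot\nu'\leq_d\gamma\cdot\nu$, which, by the strict positivity of the $\gamma_j$, forces $\nu'\leq_d\nu$, contradicting the minimality of $\nu$. The surviving term is not identically zero because $A'_\nu\not\equiv 0$, $H(Y,0)$ is an analytic isomorphism near the origin, and each $U_j(Y,0)$ is a unit, so $\gamma\cdot\nu\in\text{Supp}(s^Z)$. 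Applying the same argument to $\Phi^{-1}$ (whose distinguished exponents are $\gamma_j^{-1}$) yields the reverse inclusion, showing that $\nu\mapsto\gamma\cdot\nu$ is a bijection between the two minimal supports. The main obstacle is precisely this non-cancellation step, which succeeds thanks to the combination of the minimality of $\nu$ and the strict positivity of the exponents $\gamma_j$ to rule out any cross-contribution at the leading exponent.
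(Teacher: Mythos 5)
Your argument is correct and follows essentially the same route as the paper's proof: a reduction along the stratum using path-connectedness and compactness, followed at the overlap by the local form $\bar z_\ell=z_\ell^{\gamma_\ell}h_\ell(\mathbf{y},\mathbf{z})$ with $h_\ell(\mathbf{0},\mathbf{0})\ne 0$ of the chart transition (Equations \eqref{eq:morfimoslocal}--\eqref{eq:isomorfimoslocal}), combined with strict positivity of the $\gamma_\ell$ and pairwise incomparability of the minimal exponents. The only presentational difference is that the paper substitutes into the finite monomial presentation of $f$ in the chart $\varphi_2$ rather than into the full series expansion (thereby avoiding any rearrangement questions for infinite generalized series under composition), while you make explicit the non-cancellation of the leading coefficients, which the paper leaves implicit.
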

\begin{proof}
Using that $S$ is path connected and by compactness of a given path from $p$ to $q$, we can reduce ourselves to the case where both points $p$ and $q$ belong to the same connected component $W$ of $U\cap V_1\cap V_2$. Write $\mathbf{y}=(y_1,y_2,\ldots,y_{n-e})$, $\mathbf{z}=(z_1,z_2,\ldots,z_{e})$, and $\bar{\mathbf{y}}=(\bar{y}_1,\bar{y}_2,\ldots,\bar{y}_{n-e})$, $\bar{\mathbf{z}}=(\bar{z}_1,\bar{z}_2,\ldots,\bar{z}_{e})$, where, up to reordering, $(\mathbf{y},\mathbf{z})$ are the coordinate functions associated to $\varphi_1$ and $(\bar{\mathbf{y}},\bar{\mathbf{z}})$ are the ones associated to $\varphi_2$, in such a way that $\mathbf{y}|_{S\cap V_1}$,  $\bar{\mathbf{y}}|_{S\cap V_2}$ are analytic coordinates in $W \cap S$. That is, we have that 
$$
W \cap S = \{z_1=z_2=\cdots=z_e=0\}=\{\bar{z}_1=\bar{z}_2=\cdots=\bar{z}_e=0\}.
$$
Up to a new reorder of the variables, for instance  of $\mathbf{z}$, the change of coordinates $\varphi_2 \circ \varphi_1^{-1}$ satisfies that $\bar{y}_j=g_j(\mathbf{y},\mathbf{z})$, and $\bar{z}_\ell=z_\ell^{\gamma_\ell}h_\ell(\mathbf{y},\mathbf{z})$, where $\gamma_\ell > 0$ and $h_\ell(\mathbf{0},\mathbf{0})\ne 0$, for any $\ell=1,2,\ldots,n-k$, in view of Equations \eqref{eq:morfimoslocal} and \eqref{eq:isomorfimoslocal}. We summarize these expressions by writing $\bar{\mathbf{y}}=g$ and $\bar{\mathbf{z}}=\mathbf{z}^\gamma h$. If $\Delta_2=\text{Supp}_{\min,S}(f;\varphi_2)=\{\mu_1,\mu_2,\ldots,\mu_t\}$, the expression of $f$ in coordinates $(\bar{\mathbf{y}},\bar{\mathbf{z}})$ is 
$$
\begin{array}{cc}
	f|_W= \bar{\mathbf{z}}^{\mu_1}V_1(\bar{\mathbf{y}},\bar{\mathbf{z}})+ \bar{\mathbf{z}}^{\mu_2}V_2(\bar{\mathbf{y}},\bar{\mathbf{z}})+\cdots+ \bar{\mathbf{z}}^{\mu_t}V_t(\bar{\mathbf{y}},\bar{\mathbf{z}}),
\end{array}
$$
where $V_j(\bar{\mathbf{y}},\mathbf{0}) \not\equiv 0$, for any $j=1,2,\ldots,t$. 
Applying the change of coordinates in order to get the expression of $f$ in $(\mathbf{y},\mathbf{z})$, we obtain
$$
f|_W= \mathbf{z}^{\gamma\mu_1}W_1(\mathbf{y},\mathbf{z})+\mathbf{z}^{\gamma\mu_2}W_2(\mathbf{y},\mathbf{z})+\cdots+ \mathbf{z}^{\gamma\mu_t}W_1(\mathbf{y},\mathbf{z}), \quad W_j(\mathbf{y},\mathbf{z})=h^{\mu_j}V_j(g,\mathbf{z}^\gamma h),
$$
where $\gamma \mu_j=(\gamma_1\mu_{j,1},\gamma_2\mu_{j,2},\ldots,\gamma_{e}\mu_{j,e})$. Note that $W_j(\mathbf{y},\mathbf{0}) \ne 0$, for any $j=1,2,\ldots,t$. As a consequence, each $\lambda \in \Delta_1=\text{Supp}_{\min,S}(f;\varphi_1)$ is such that $\gamma\mu_j \leq_d \lambda$, for some $j\in \{1,2,\ldots,t\}$; hence $\Delta_1  \subset \bar{\Delta}_2=\{\gamma \mu_1,\gamma \mu_2,\ldots, \gamma \mu_t\}$. Now, recall that, any pair of elements $\mu_j,\mu_k \in \Delta_2$ are incomparable for the order $\leq_d$. Then, the elements of $\bar{\Delta}_2$ are also mutually incomparable and the equality $\Delta_1=\bar{\Delta}_2$ holds.
\end{proof}

Next definition makes sense as a result of Lemma \ref{lema:soporteminimocambiocarta}.
\begin{defin}
Let $f:M \to \mathbb{R}$ be a generalized analytic function. The \emph{monomial complexity $m_S(f)$ of $f $ along $S$} is the number $m_S(f)=\#\text{Supp}_{\min}(f|_V,S; \varphi,p)$, where $(V,\varphi)$ is a local chart centered at any point $p \in S$.
\end{defin}
When $S=\{p\}$, we just write $m_p(f)$ instead of $m_{\{p\}}(f)$. 

\begin{lema}[Horizontal stability] \label{lema:estabilidadhorizontal}
	Let $f:M \to \mathbb{R}$ be a generalized analytic function. Given two strata $S$ and $T$ such that $T \subset \bar{S}$, we have the inequality $m_S(f) \leq m_T(f)$.
\end{lema}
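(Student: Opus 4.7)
The idea is to pick a point $q\in T$ and a nearby point $p\in S$ (which exists since $T\subset\bar S$), express $f$ in a single chart at $q$, and then compare the minimal support at $p$ and at $q$ via a simple coordinate translation on those $z$-variables that vanish on $T$ but not on $S$.

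First, I would choose a local chart $(V_q,\varphi_q)$ centered at $q$, with coordinates $(\mathbf{y},\mathbf{z})$ in the sense of Remark~\ref{rem:coordmixtas}, where $\mathbf{z}=(z_1,\ldots,z_{e_T})$ are the generalized coordinates defining the boundary components through $q$. After reordering, I may assume that $S\cap V_q$ is (locally at $q$) defined by $z_1=\cdots=z_{e_S}=0$ with $z_{e_S+1},\ldots,z_{e_T}>0$. Picking $p\in S\cap V_q$ close to $q$, write $\mathbf{z}=(\mathbf{z}',\mathbf{z}'')$ with $\mathbf{z}''(p)=\mathbf{a}\in\mathbb{R}_{>0}^{e_T-e_S}$. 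Since $z_j^{\mu''}$ is standard analytic in $\tilde z_j=z_j-a_j$ wherever $z_j>0$, the tuple $(\tilde{\mathbf{y}},\mathbf{z}',\tilde{\mathbf{z}}'')$ is a valid system of generalized analytic coordinates centered at $p$; call the resulting chart $\varphi_p$.

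Next, writing $f$ through $\varphi_q$ as a convergent mixed series $\sum_{\mu}A_\mu(\mathbf{y})\mathbf{z}^\mu$ with $A_\mu\in\mathbb{R}\{Y\}$, the key step is to expand $\mathbf{z}''^{\mu''}=(\mathbf{a}+\tilde{\mathbf{z}}'')^{\mu''}$ to obtain
\[
f=\sum_{\mu'}B_{\mu'}(\tilde{\mathbf{y}},\tilde{\mathbf{z}}'')\,(\mathbf{z}')^{\mu'},\qquad B_{\mu'}(\tilde{\mathbf{y}},\tilde{\mathbf{z}}'')=\sum_{\mu''}A_{(\mu',\mu'')}(\tilde{\mathbf{y}})(\mathbf{a}+\tilde{\mathbf{z}}'')^{\mu''}.
\]
By construction $B_{\mu'}$ is the coefficient of $(\mathbf{z}')^{\mu'}$ in $f^{Z'}$ in the chart $\varphi_p$, so $\mathrm{Supp}_S(f;\varphi_p)=\{\mu':B_{\mu'}\not\equiv 0\}$. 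The heart of the proof is to show this equals the image $\pi(\mathrm{Supp}_T(f;\varphi_q))$ under the projection $\pi:\mathbb{R}_+^{e_T}\to\mathbb{R}_+^{e_S}$. One inclusion is trivial; for the other, I would argue that if $B_{\mu'}\equiv 0$ as a germ at $p$, then the auxiliary convergent generalized series $g(\tilde{\mathbf{y}},\mathbf{w})=\sum_{\mu''}A_{(\mu',\mu'')}(\tilde{\mathbf{y}})\mathbf{w}^{\mu''}$ vanishes on an open subset of the interior $\{\mathbf{w}_i>0\}$, where it is standard real analytic. Connectedness of the interior of its convergence polydisc propagates the vanishing to the whole interior, continuity extends it up to $\mathbf{w}=\mathbf{0}$, and the uniqueness of the formal expansion of a convergent generalized series (in the sense recalled after \eqref{eq:serieconvergente}) forces every $A_{(\mu',\mu'')}$ to vanish.

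Finally, I would conclude by a purely combinatorial observation: for any subset $\Sigma\subset\mathbb{R}_+^{e_T}$, every minimal element of $\pi(\Sigma)$ lies in $\pi(\mathrm{Min}_{\leq_d}(\Sigma))$, since lifting a minimal element of $\pi(\Sigma)$ and descending to a $\leq_d$-minimal element of $\Sigma$ projects back to the same vector. Applying this with $\Sigma=\mathrm{Supp}_T(f;\varphi_q)$ yields
\[
m_S(f)=\#\mathrm{Min}_{\leq_d}(\pi(\Sigma))\leq \#\pi(\mathrm{Min}_{\leq_d}(\Sigma))\leq\#\mathrm{Min}_{\leq_d}(\Sigma)=m_T(f).
\]
The main technical obstacle is the step where one rules out cancellation in $B_{\mu'}$; everything else is either a combinatorial formality or a routine coordinate manipulation based on Remark~\ref{rem:coordmixtas} and Lemma~\ref{lema:soporteminimocambiocarta}.
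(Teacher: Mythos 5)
Your proof is correct and, at bottom, rests on the same mechanism the paper invokes---the projection inequality on minimal supports recorded in Equation \eqref{eq:soporteminimoproyeccion}---but the paper dispatches it in a single sentence and leaves implicit exactly the step you rightly identify as the heart of the matter. To apply \eqref{eq:soporteminimoproyeccion} one must compare $\mathrm{Supp}_{\min,T}(f;\varphi_q)$, computed in a chart centered at $q\in T$, with $\mathrm{Supp}_{\min,S}(f;\varphi_p)$, computed in a chart centered at a nearby $p\in S$; these two charts differ by a translation in the generalized variables $\mathbf{z}''$ that are strictly positive at $p$, and it is not a priori obvious that the translated coefficients $B_{\mu'}$ do not all cancel for some $\mu'$ that actually occurs in the projected support at $q$. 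Your no-cancellation argument---analyticity of the sum $f_g$ on the connected interior $\{\mathbf{w}>0\}$ of the convergence polydisc, the identity principle, continuity up to $\mathbf{w}=\mathbf{0}$, and the uniqueness of the convergent generalized expansion at the origin---is exactly what is needed to conclude $\mathrm{Supp}_S(f;\varphi_p)=\pi(\mathrm{Supp}_T(f;\varphi_q))$, and it parallels in spirit the explicit non-vanishing check $W_j(\mathbf{y},\mathbf{0})\ne 0$ that the paper \emph{does} carry out in the proof of Lemma \ref{lema:soporteminimocambiocarta}. Two small remarks: your closing combinatorial observation that $\#\mathrm{Min}_{\leq_d}(\pi(\Sigma))\leq\#\mathrm{Min}_{\leq_d}(\Sigma)$ is precisely the inclusion $\mathrm{Supp}_{\min}(s^Z)\subset\mathrm{pr}(\mathrm{Supp}_{\min}(s))$ stated in the paper just before \eqref{eq:soporteminimoproyeccion}, so you may simply cite it (and it does require, as there, that $\Sigma$ be contained in a product of well-ordered sets so that minimal elements exist below every point); and you should also translate the standard coordinates $\mathbf{y}$ by $\mathbf{y}(p)$ when centering the chart at $p$, which is harmless since vanishing of a real analytic germ is translation-invariant but is worth writing out for completeness.
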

\begin{proof}
	It is a direct consequence of Equation \eqref{eq:soporteminimoproyeccion}.
\end{proof}	


\begin{defin}
	Let $\mathcal{M}=(M,\mathcal{G}_M)$ be a generalized analytic manifold. A generalized analytic function $f:M\to\mathbb{R}$ is of {\em stratified monomial type} if $m_S(f)=1$ for any stratum $S\in \mathcal{S}_M$.
\end{defin}

\subsection{Standardizations and Blowing-ups}
Let $\mathcal{M}=(M,\mathcal{A}_M)$ be a standard or generalized analytic manifold and let $Y \subset M$ be a connected closed subset of $M$. We say that $Y$ is a {\em geometrical center for $\mathcal{M}$} if at each $p \in Y$ there is a local chart $(V,\varphi)$ centered at $p$ and some $r\in\{1,2,\ldots,n\}$, such that
$$
Y\cap V=\{q \in V: \; x_1(q)=x_2(q)=\cdots=x_r(q)=0\},
$$ 
where $(x_1,x_2,\ldots,x_n)$ are the coordinates defined by $\varphi$. For instance, if $Z\in\mathcal{Z}^j$ for some $j \leq n-1$, then $Z$ is a geometrical center (the number $r$ in the definition above is $n-j$); they are called \emph{combinatorial geometrical centers}. 

When $\mathcal{M}=(M,\mathcal{O}_M)$ is a standard analytic manifold (with boundary), the construction of the {\em (real) blowing-up with center $Y$} is quite well-known (see details in \cite{Mar-Rol-San}). It consists of a proper morphism of standard analytic manifolds
$$
(\pi_Y,\pi_Y^{\#}):(\widetilde{M},\mathcal{O}_{\widetilde{M}})\to(M,\mathcal{O}_M)
$$
inducing an isomorphism between $\widetilde{M}\setminus E$ and  $M\setminus Y$, where the \emph{exceptional divisor} $E=\pi_Y^{-1}(Y)$ is a new component of $\partial\widetilde{M}$. On the contrary, when $\mathcal{M}=(M,\mathcal{G}_M)$ is a generalized analytic manifold, the blowing-up of $\mathcal{M}$ with geometrical center $Y$ may even not exist and, if it does, depends on the so called {\em standardization} of $\mathcal{M}$ (see  \cite{Mar-Rol-San}). We devote this section to recall this concept and what do we mean by blowing-up in the category of generalized analytic manifolds.

\strut 

Let $\mathcal{N}=(N,\mathcal{O}_{N})$ be a standard analytic manifold. Given $p\in N$ and $\varphi_p:V_p \to U_p$ a local chart at $p$, we consider $\mathcal{O}^\epsilon_N(V_p)$ to be the $\mathbb{R}$-algebra of continuous functions $f:V_p\to \mathbb{R}$ such that $f \circ \varphi_p^{-1}$ belongs to $\mathcal{G}_n(U_p)$. Taking the sheaf associated to the presheaf obtained by means of these assignations, we obtain a generalized analytic manifold $\mathcal{N}^\epsilon=(N, \mathcal{O}^\epsilon_N)$, called the \emph{enrichment of $\mathcal{N}$} (see \cite[Prop. 3.17]{Mar-Rol-San}). Note that the stratifications $\mathcal{S}_{\mathcal{N}^\epsilon}$ and $\mathcal{S}_\mathcal{N}$ coincide. Moreover, if $Y$ is a geometrical center for $\mathcal{N}$, then it is also a geometrical center for $\mathcal{N}^\epsilon$.

The assignment $\mathcal{N} \to \mathcal{N}^\epsilon$ is not a functor from the category of $\mathcal{O}$-manifolds to the one of $\mathcal{G}$-manifolds, as we have already seen that the morphisms do not lift to the enrichments unless they can be expressed locally as tuples of monomial-type functions.


\begin{defin}
A \emph{standardization of a generalized analytic manifold $\mathcal{M}=(M,\mathcal{G}_M)$} is a subsheaf $\mathcal{O}$ of $\mathcal{G}_M$ such that $\mathcal{N}=(M,\mathcal{O})$ is a standard analytic manifold with $\mathcal{N}^\epsilon=\mathcal{M}$. A  generalized analytic manifold $\mathcal{M}$ is said to be {\em standardizable} if there exists a standardization $\mathcal{O}\subset \mathcal{G}_M$ of it. 
\end{defin}
Note that an standardization is the same thing as providing an atlas $\mathfrak{A}_M=\{(V_j,\varphi_j)\}_{j\in J}$ of $\mathcal{M}$ such that, for any $i,j \in J$, the changes of coordinates $\varphi_j\circ\varphi_i^{-1}$ are standard analytic in their domains of definition $\varphi_i(V_i\cap V_j)$.

\begin{rem}
Let $\mathcal{M}=(M,\mathcal{A})$ be an analytic manifold without boundary (hence $\mathcal{M}$ is at the same time standard and generalized). Take a point $p\in M$, an open neigbourhood $V$ of $p$, and a coordinate system $(x_1,x_2,\ldots,x_n)$ defined in $V$ and centered at $p$. For each $i=1,2,\ldots,n$, take positive integers $m_i \in \mathbb{Z}_{>0}$ and the functions $y_i=x_i^{m_i}$. The map $\varphi:V \to \mathbb{R}^n$ defined by $\varphi(q)=(y_1(q),y_2(q),\ldots,y_n(q))$ is an homeomorphism onto $U=\varphi(V)$ and the sheaf $\tilde{\mathcal{O}}$ defined locally at $p\in V$ by
$$
\tilde{\mathcal{O}}_p=\{f\circ \varphi_p: \; f \in \bar{\mathcal{O}}_{n,\varphi(p)}\}\subset \mathcal{A}_p	
	$$
is a subsheaf of $\mathcal{A}|_V$ so that $(V,\tilde{\mathcal{O}})$ is a standard analytic manifold. However $\tilde{\mathcal{O}}^\epsilon=\mathcal{A}|_V$ if and only if $m_i=1$ for all $i=1,2,\ldots,n$, or equivalently, if $\varphi$ is a local chart if $\mathcal{M}$. In this case, there is a unique standardization for $\mathcal{M}$: the total sheaf $\mathcal{A}$ itself.

On the contrary, when $\mathcal{M}=(M,\mathcal{G})$ is a generalized analytic manifold with $\partial M \ne \emptyset$, we may have a lot of variation. For example if we take a point $p\in \partial M$
and a small enough neighbourhood $V$ of $p$, there are infinitely many standardizations of the local generalized analytic manifold $(V,\mathcal{G}|_V)$. But there are also examples of non-standardizable generalized manifolds like the one in \cite[Example 3.20]{Mar-Rol-San}.
\end{rem}

\begin{defin}
A {\em center $\xi$ of blowing-up for a generalized analytic manifold $\mathcal{M}$} is a pair $\xi=(Y,\mathcal{O})$, where $\mathcal{O}$ is an standardization of $\mathcal{M}$ and $Y$ is a geometrical center for $(M,\mathcal{O})$.
\end{defin}

\begin{rem}
	We can have a geometrical center $Y$ for a generalized manifold $\mathcal{M}$ and an standar\-dization $\mathcal{O}$ of $\mathcal{M}$ such that $Y$ is not a geometrical center for $\mathcal{N}=(M,\mathcal{O})$. For example, let us take the generalized analytic manifold $\mathcal{M}=(V,\mathcal{G}_{1,1}|_V)$, where $V \subset\mathbb{R}\times \mathbb{R}_+$ is a small neighbourhood of the origin $(0,0)\in \mathbb{R}^2$. Let $(y,z)$ be the natural coordinates in $\mathbb{R}^2$ and let $Y$ be the closed topological subspace of $V$ given by the zeros of $y-z^\lambda$, where $\lambda \notin \mathbb{Z}_{>0}$. Note that $(y',z)$ with $y'=y-z^\lambda$ are also coordinates of $\mathcal{M}$ at the origin, then $Y$ is a geometrical center for $\mathcal{M}$. But, if we take the standardization $\mathcal{O}\subset \mathcal{G}_{1,1}|_V$ given by the local chart $(y,z)$, then $Y$ is not a geometrical center for $\mathcal{N}=(V,\mathcal{O})$.
\end{rem}

\strut

Now we have the ingredients to introduce the blowing-up morphisms in the category of generalized manifolds. 

\begin{defin}
Let $\mathcal{M}$ be a generalized analytic manifold and let $\xi=(Y,\mathcal{O})$ be a center of blowing-up for $\mathcal{M}$. The {\em blowing-up $\pi_\xi: \mathcal{M}_\xi \to \mathcal{M}$ with center $\xi$} 
is the morphism of $\mathcal{G}$-manifolds induced by the blowing-up $\pi_Y:\widetilde{\mathcal{N}}\to\mathcal{N}$ of the standard manifold $\mathcal{N}=(M,\mathcal{O})$ with center $Y$ (note that we are writing $\mathcal{M}_\xi = \tilde{\mathcal{N}}^e$).
\end{defin}

If $Z \in \mathcal{Z}_\mathcal{M}$ is a combinatorial geometrical center, and $\mathcal{O} \subset \mathcal{G}_M$ is a standardization, we can see that $\xi=(Z,\mathcal{O})$ is a center of blowing-up for $\mathcal{M}$. Such a $\xi$ is called a \emph{combinatorial center of blowing-up}, and we say that $\pi_\xi:\mathcal{M}_\xi \to \mathcal{M}$ is a \emph{combinatorial blowing-up}.

%

\section{The Category of Monomial Analytic Manifolds}

In this section we introduce a subcategory of generalized analytic manifolds, called \emph{monomial generalized analytic manifolds}, which has many combinatorial properties. The objects of this subcategory are those $\mathcal{G}$-manifolds equipped with an atlas for which the change of coordinates are expressed as monomial morphisms. We codify these changes of coordinates by means of matrices of exponents, and also we do the same for the morphisms. The formulation in this combinatorial language allow us to conclude that monomial manifolds are always standardizable. This result will be the key to prove the stratified reduction of singularities.

\subsection{Monomial Manifolds}

We devote this section to introduce the category of \emph{monomial generalized or standard analytic manifolds}. 

\strut

We consider an $\mathcal{A}$-manifold $\mathcal{M}=(M,\mathcal{A}_M)$, where $\mathcal{A}\in \{\mathcal{O},\mathcal{G}\}$. Let us write $\partial M= \cup_{i\in I} E_i$, where $\{E_i\}_{i \in I}$ is the family of components of $\partial M$, and assume that $I$ is a finite set. As in \cite{Mol}, we say that $\partial M$ is a \emph{strong normal crossing divisor} if for each $J \subset I$ the intersection $E_J= \cap_{j\in J} E_i$ is a connected set (in particular $E_\emptyset=M$ is connected). In this case, we have natural bijections $\mathcal{H}\to \mathcal{S} \to \mathcal{Z}$, where
$$
\mathcal{H}=\mathcal{H}_{\mathcal{M}}=\{J \subset I ;  \; E_J\ne \emptyset\},
$$
given by $J \mapsto E_J \mapsto S_J$, where $S_J \in \mathcal{S}$ is the stratum of $M$ such that $E_J= \overline{S}_J$. Given $Z \in \mathcal{Z}$, the element $I_Z \in \mathcal{H}$ such that $E_{I_Z}=Z$ is called the \emph{index set of $Z$}. We use the notation $I_p:=I_{\{p\}}$ when $p\in \mathcal{Z}^0$. Observe that $\# I_Z =\dim M-\dim Z$. In particular, if $p\in \mathcal{Z}^0$ we have that $I_p$ is a set with $n$ elements. Note also that $\tilde{Z} \in \mathcal{Z}(Z)$ if and only if $I_Z \subset I_{\tilde{Z}}$.

\begin{rem} 
Let $Y \in \mathcal{Z}^1$ be a compact edge. Under the condition of strong normal crossings for $\partial M$, we have that $\mathcal{Z}^0(Y)$ consists exactly on two corner points $p$ and $q$ and there are exactly two different elements $i_p,i_q \in I \setminus I_Y$ such that $I_p=I_Y \cup \{i_p\}$ and $I_q=I_Y \cup \{i_q\}$. 
\end{rem} 

Let us fix from now on an $\mathcal{A}$-manifold $\mathcal{M}=(M,\mathcal{A}_M)$ with at least one corner point and such that $\partial M$ is a strong normal crossings divisor. Given $p\in \mathcal{Z}^0$, define the set
$$
V^{\star}_p:=\cup\{ S_J\in \mathcal{S_M}: \; J \subset I_p\}= \cup \{S \in \mathcal{S_M}: \;  p\in \overline{S}\}.
$$
We have that $V^\star_p$ is an open neighbourhood of $p$ in $M$ homeomorphic to $\mathbb{R}^n_{+}$. Let us suppose that $\mathcal{M}$ has an \emph{affine chart} $(V^\star_p,\varphi_p)$ at $p$, that is, $\varphi_p$ is defined in $V^\star_p$, $\varphi_p(p)=0$ and  $\varphi_p(V^\star_p)=\mathbb{R}^n_{+}$. For any $i\in I_p$, we denote by $x_{p,i}:V^\star_p \to \mathbb{R}$ the coordinate component of $\varphi_p$ satisfying
$$
E_i \cap V^\star_p=\{q \in V^\star_p: \; x_{p,i}(q)=0\}.
$$
The family of functions $\mathbf{x}_p=(x_{p,i})_{i \in I_p}$ is equal to the family of coordinates of $\varphi_p$. Regardless of the ordering of these coordinates, we just identify $\varphi_p$ with $\mathbf{x}_p$. Even more, since the sets $V^\star_p$ are completely determined by the stratification of $\mathcal{M}$, we identify $(V^\star_p,\varphi_p)$ with $\mathbf{x}_p$ and we say simply that $\mathbf{x}_p$ is an affine chart over $V^\star_p$.

Assume that $\mathcal{M}$ has an atlas $\mathfrak{a}=\{\mathbf{x}_p\}_{p \in \mathcal{Z}^0}$, where each $\mathbf{x}_p$ is an affine chart over $V^\star_p$. With such a convention note that, in fact $\mathfrak{a}$ is not exactly an atlas, but an equivalence class of atlases, since we have not considered a particular ordering of the coordinates $\mathbf{x}_p$. We say that $\mathfrak{a}$ is a \emph{monomial atlas} if all the changes of coordinates have a purely monomial expression. More precisely, given two corner points $p,q \in \mathcal{Z}^0$, for each $i \in I_q$ there exist maps $r_i:I_p \to \mathbb{R}$, given by $j \mapsto r_i(j)=:r_{ij}$ such that the change of coordinates $\mathbf{x}_q \circ \mathbf{x}_p^{-1}$ has the following expression
\begin{equation}\label{eq:relacioncoordenadas}
x_{q,i}=\mathbf{x}_{p}^{r_i}, \quad  \mathbf{x}_{p}^{r_i}=\Pi_{j\in I_p}x_{p,j}^{r_{ij}}, \quad r_{ij}=r_i(j).
\end{equation}

\begin{defin}
A \emph{monomial $\mathcal{A}$-manifold} is a pair $(\mathcal{M},\mathfrak{a})$, where  $\mathcal{M}$ is a combinatorially stratified $\mathcal{A}$-manifold and  $\mathfrak{a}=\{\textbf{x}_p\}_{p\in \mathcal{Z}^0}$ is a monomial atlas over $\mathcal{M}$.
\end{defin}

\begin{rem} \label{rem:variosatlasmonomiales}
Assume that $\mathcal{M}$ is an $\mathcal{A}$-manifold admitting two monomial atlases $\mathfrak{a}$ and $\mathfrak{a}'$, that are compatible in the sense that for all $p \in \mathcal{Z}^0$ we have that $\mathbf{x}'_p\circ \mathbf{x}_p^{-1}$ has a purely monomial expression, where $\mathbf{x}_p\in \mathfrak{a}$ and $\mathbf{x}'_p\in \mathfrak{a}'$. When $\mathcal{A}=\mathcal{O}$ we have necessarily that $\mathfrak{a}=\mathfrak{a}'$. On the contrary, when $\mathcal{A}=\mathcal{G}$ we have a lot of variation. Indeed, from the monomial manifold $(\mathcal{M},\mathfrak{a})$, we can obtain many different monomial atlases for $\mathcal{M}$ just by changing at a single point $p\in \mathcal{Z}^0$ the affine coordinates $\mathbf{x}_p\in \mathfrak{a}$ over $V^\star_p$, by the affine coordinates $\mathbf{y}_p$ over $V^\star_p$ defined by $y_{p,i}=x_{p,i}^{s_i}$, with $s_i\in \mathbb{R}_{>0}$, for all $i\in I_p$.
\end{rem}

\begin{defin}
Let us consider two monomial $\mathcal{A}$-manifolds $(\mathcal{M}_1,\mathfrak{a}_1)$ and $(\mathcal{M}_2,\mathfrak{a}_2)$ and let $\phi:M_1 \to M_2$ be a continuous map. We say that $\phi$ is a \emph{morphism of monomial $\mathcal{A}$-manifolds} if $\phi$ provides an $\mathcal{A}$-manifolds morphism between $\mathcal{M}_1$ and $\mathcal{M}_2$, and moreover:

\begin{itemize}
\item For any point $p\in \mathcal{Z}^0_1$ we have that $\bar{p}=\phi(p)\in \mathcal{Z}^0_2$, where $\mathcal{Z}_1=\mathcal{Z}_{\mathcal{M}_1}$ and $\mathcal{Z}_2=\mathcal{Z}_{\mathcal{M}_2}$.
\item The expression of $\phi$ in the atlases $\mathfrak{a}_1$ and $\mathfrak{a}_2$ of $\mathcal{M}_1$ and $\mathcal{M}_2$, respectively, is monomial; that is, if $p\in \mathcal{Z}^0_1$ and $\bar{p}=\phi(p)$,  $\bar{\mathbf{x}}_{\bar{p}}  \in \mathfrak{a}_2$ and $\mathbf{x}_p \in \mathfrak{a}_1$, the composition $\bar{\mathbf{x}}_{\bar{p}} \circ \phi \circ \mathbf{x}_p^{-1}$ is written as 
\begin{equation} \label{eq:morfismos}
\bar{x}_{\bar{p},i}=\mathbf{x}_p^{b_i}, \; \text{ where } b_i:I_p  \to \mathbb{R}_+, \; \text{ for all } i \in I_{\bar{p}}.
\end{equation}
\end{itemize}
\end{defin}

The first example of monomial $\mathcal{A}$-manifold is an \emph{m-corner} $(\mathcal{M}_p, \{\mathbf{x}_p\})$ which consists on the following: Given an $\mathcal{A}$-manifold $\mathcal{M}=(M,\mathcal{G}_M)$ and an affine chart $(V,\varphi)$ centered at $p$, we take  $\mathcal{M}_p=(V,\mathcal{G}_M|_V)$ and $\mathbf{x}_p$ the tuple of components of $\varphi$. We write simply $(\mathcal{M}_p, \mathbf{x}_p)$.

\subsection{Combinatorial Data of Monomial Manifolds}

We codify the objects and morphism on the category of monomial $\mathcal{A}$-manifolds by means of their associated \emph{combinatorial data}. We need to introduce some notation:

\begin{notation}
Let $\mathbb{R}^{I}$ be the set of maps $I \to \mathbb{R}$, where $I$ is a finite set. 
\begin{itemize}
	\item We define $\mathds{1}_I : I \to \mathbb{R}$ to be the element of $\mathbb{R}^I$ such that $\mathds{1}_I(i)=1$, for all $i\in I$. 
	\item Given $\lambda \in \mathbb{R}^I$, we denote by $D_\lambda$ the element of $\mathbb{R}^{I \times I}$ such that $D_\lambda(i,i)=\lambda(i)$, for all $i \in I$ and $D_\lambda (i,j)= 0$, when $i\ne j$.
	\item Given  $A: {I\times J} \to \mathbb{R}$ and $B: {J\times K} \to \mathbb{R}$, with $I$, $J$ and $K$ finite sets, we define:
	\begin{itemize}
		\item $AB$ to be the element of $\mathbb{R}^{I\times K}$ given by $(i,k)\mapsto\sum_{j\in J} A(i,j)B(j,k)$. 
		\item $A^{-1}$ to be (if it exists) the element of $\mathbb{R}^{J\times I}$ such that $A^{-1}A= D_{\mathds{1}_{J}}$, and also $AA^{-1}= D_{\mathds{1}_{I}}$.
	\end{itemize}
\end{itemize}
Roughly, we consider maps $A: I \times J \to \mathbb{R}$ as matrices of size $\#I \times \#J$ with real coefficients, but without an specified order in the sets $I$ and $J$. We frequently use the matrix notation $A_{ij}=A(i,j)$, for $(i,j)\in I \times J$. 
\end{notation}

Take a monomial $\mathcal{A}$-manifold $(\mathcal{M},\mathfrak{a})$. Given two corner points $p,q \in\mathcal{Z}^0$, let $\mathbf{x}_p$ and $\mathbf{x}_q$ be the affine coordinates at $p$ and $q$, respectively, belonging to $\mathfrak{a}$. We codify the change of coordinates $\mathbf{x}_{q}\circ \mathbf{x}^{-1}_{p}$ expressed by the relations in Equation \eqref{eq:relacioncoordenadas} by means of the \emph{matrix of exponents} 
$$
C^{pq}:I_q\times I_p \to \mathbb{R}, \; \text{given by } (i,j) \mapsto r_{ij}.
$$
Note that we have the equality $C^{qp}=(C^{pq})^{-1}$.
\begin{defin}
The \emph{combinatorial data $\mathfrak{C}_{(\mathcal{M},\mathfrak{a})}$ of a monomial $\mathcal{A}$-manifold $(\mathcal{M},\mathfrak{a})$} is the collection of matrices of exponents $C^{pq}$, such that $p,q \in \mathcal{Z}^0$. 
\end{defin}

Let $p,q \in\mathcal{Z}^0$ be two corner points. A \emph{path $\mathcal{P}$ (of compact edges) from $p$ to $q$} is a list $\mathcal{P}=(Y_1,Y_2,\ldots,Y_k)$ where each $Y_j$ is a compact edge in $\mathcal{M}$, such that $p \in Y_1$, $q \in Y_k$, and the intersection $p_j =Y_j\cap Y_{j+1}$ is a corner point, for all $j=1,2,\ldots,k-1$. 
Note that, for any pair of corner points $p,q \in \mathcal{Z}^0$, there is always a path $\mathcal{P}=(Y_1,Y_2,\ldots,Y_k)$ from $p$ to $q$. Moreover, we can assume that $Y_1 \cup Y_2 \cup \cdots \cup Y_k \subset E_J$, where $J=I_p \cap I_q$, because of the connectedness of $E_J$. We say in this case that $\mathcal{P}$ is a \emph{path for $p$ to $q$ inside $E_J$}. Given a path $\mathcal{P}$ from $p$ to $q$ inside $E_J$, we have the equality
\begin{equation}\label{eq:bastanaristas}
	C^{pq}=C^{p_{k-1}q}\cdots C^{p_1p_2}C^{pp_1} \in \mathbb{R}^{I_q\times I_p},
\end{equation}
where $p_i=Y_i \cap Y_{i+1}$, for each $i=1,2,\ldots,k-1$. That is, the matrices of exponents between corner points connected by edges generate the whole combinatorial data $\mathfrak{C}_{(\mathcal{M},\mathfrak{a})}$ just by taking products. From now on, if $p$ and $q$ are the only two corner points in a compact edge $Y$, we simply say that $p$ and $q$ are connected through $Y$, and we write to emphasize $C_Y^{pq}=C^{pq}$. 

\strut

Let $(\mathcal{M},\mathfrak{a})$ be a monomial generalized analytic manifold. Let us show some properties for the combinatorial data of $(\mathcal{M},\mathfrak{a})$.

\strut

A corner point $p \in \mathcal{Z}^0$ belongs to $E_K=\overline{S}_K$ if and only if $K \subset I_p$. Hence, if we fix two corner points $p$ and $q$ in $\mathcal{Z}^0$, the smallest stratum containing both $p$ and $q$ in its adherence is $S_J$ with $J=I_p \cap I_q$. Moreover, we have 
$$
V^{\star}_p \cap V^{\star}_q = \bigcup_{K \subset J} S_K.
$$
Let $\mathbf{x}_p, \mathbf{x}_q\in \mathfrak{a}$ be the affine coordinates at $p$ and $q$, respectively. We consider the coordinate systems $\mathbf{x}^a_p=\{x_{p,i}^*\}_{i \in I_p}$ and $\mathbf{x}^a_q=\{x_{q,j}^*\}_{j \in I_q}$, centered at $a\in S_J$, defined by $x_{p,i}^*=x_{p,i}-x_{p,i}(a)$ and $x_{q,j}^*=x_{q,j}-x_{q,j}(a)$, for corresponding $i\in I_p$ and $j\in I_q$, respectively. Note that the coordinate functions $x^*_{p,i}$ (resp. $x^*_{q,j}$) are standard analytic at $a$ if and only if $i \in I_p\setminus J$ (resp. $j \in I_q\setminus J$), hence taking into account Equations \eqref{eq:morfimoslocal} and \eqref{eq:isomorfimoslocal} about the local expression of morphisms in arbitrary generalized analytic manifolds, for all $i,j \in J$, we get necessarily that
\begin{equation}\label{eq:partediagonal}
C^{pq}_{ij}=0, \; \text{ if } i \ne j; \quad C^{pq}_{ij} \in \mathbb{R}_{>0}, \; \text{ if } i=j.
\end{equation}
where $C^{pq} \in \mathfrak{C}_{(\mathcal{M},\mathfrak{a})}$ is the map of exponents codifying the change of coordinates $\mathbf{x}_q\circ \mathbf{x}_p^{-1}$.

In the following statement, we determine some other entries of the matrix of exponents $C^{pq}$ in the case where $p$ and $q$ are corner points connected through an edge.
\begin{lema} \label{lema:triangularinferior}
Let $Y$ be a compact edge in $M$ and let $\mathcal{Z}^0(Y)=\{p,q\}$. Denote by $i_p\in I_p$ and $i_q\in I_q$ to the indices such that $I_Y=I_q\setminus \{i_q\}=I_p\setminus \{i_p\}$. The map $C=C_Y^{pq}\in \mathfrak{C}_{(\mathcal{M},\mathfrak{a})}$ satisfies
$$
{\rm(a)}\; C_{ii}\in \mathbb{R}_{>0}, \quad {\rm(b)}\; C_{ij}=0,\, \text{ if } i\ne j, \quad {\rm(c)}\; C_{i_q,i}=0, \quad \text{ for all } i,j\in I_Y.
$$
\end{lema}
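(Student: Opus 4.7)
The plan is to split the claim into the two natural pieces: parts (a)--(b) describe the $I_Y\times I_Y$ block of the matrix $C = C_Y^{pq}$, while part (c) constrains the entries of the row indexed by $i_q$ over $I_Y$. Both parts will depend on the observation that $I_p\cap I_q = I_Y$, which follows at once from $i_p\neq i_q$ together with $i_p,i_q\notin I_Y$ (so $I_p = I_Y\sqcup\{i_p\}$ and $I_q = I_Y\sqcup\{i_q\}$ meet precisely in $I_Y$).

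For (a) and (b) I would apply Equation \eqref{eq:partediagonal} directly with $J = I_p\cap I_q = I_Y$: the required vanishing of off-diagonal entries and positivity of diagonal entries of $C^{pq}$ on $J\times J$ are exactly what that equation gives. Recall that \eqref{eq:partediagonal} itself was obtained by choosing a point $a\in S_J$ and reading off the local normal form of the isomorphism $\mathbf{x}_q\circ\mathbf{x}_p^{-1}$ from \eqref{eq:morfimoslocal}--\eqref{eq:isomorfimoslocal}; thus (a) and (b) essentially inherit the full strength of that argument.

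The main difficulty lies in (c), because that row is not covered by \eqref{eq:partediagonal}; the key observation to add is that $E_{i_q}\cap V_p^{\star} = \emptyset$. Indeed, by the description $V_p^{\star} = \bigcup_{K\subset I_p}S_K$ and $E_{i_q} = \bigcup_{K\ni i_q}S_K$, combined with $i_q\notin I_p$, these two unions involve disjoint families of strata. Consequently $x_{q,i_q}$, which vanishes precisely on $E_{i_q}\cap V_q^{\star}$, has no zero on the open set $V_p^{\star}\cap V_q^{\star}$. Now the monomial change of coordinates yields, on $V_p^{\star}\cap V_q^{\star}$, the identity
\[
x_{q,i_q} \;=\; x_{p,i_p}^{C_{i_q,i_p}}\,\prod_{j\in I_Y}x_{p,j}^{C_{i_q,j}},
\]
and this open set contains the stratum $S_{I_Y}$, where $x_{p,j} = 0$ for every $j\in I_Y$ while $x_{p,i_p}\neq 0$. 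For the right-hand side to remain defined and non-zero on $S_{I_Y}$, each exponent $C_{i_q,j}$ with $j\in I_Y$ must vanish, which is exactly (c).

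The point I expect to require the most care is precisely the bookkeeping around $V_p^{\star}\cap V_q^{\star}$: one must invoke strong normal crossings to identify this intersection with $\bigcup_{K\subset I_Y}S_K$, and one must use crucially that in a \emph{monomial} atlas the change of coordinates carries no non-zero unit factor (so the vanishing locus of $x_{q,i_q}$ on $V_p^{\star}\cap V_q^{\star}$ is dictated entirely by the exponents $C_{i_q,j}$). Once these ingredients are in place, the three statements fall out essentially without further computation.
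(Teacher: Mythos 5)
Your proof is correct, and parts (a)--(b) are handled exactly as in the paper by applying Equation \eqref{eq:partediagonal} with $J=I_p\cap I_q=I_Y$. For part (c), the fact you isolate --- that $E_{i_q}$ does not meet $V_p^\star$, so $x_{q,i_q}$ vanishes nowhere on $V_p^\star\cap V_q^\star$ --- is the same one driving the paper's argument, but the two proofs evaluate the monomial identity at different strata. The paper fixes a single $i\in I_Y$ with $C_{i_q,i}\neq 0$ and takes a point $a$ in the open face $S_i=E_i\setminus\bigcup_{j\neq i}E_j$: there only $x_{p,i}$ vanishes and every other coordinate is a nonzero real, so the formula immediately gives $x_{q,i_q}(a)=0$, hence $a\in E_{i_q}$, contradicting $a\in S_i$. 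You instead evaluate at a point of $S_{I_Y}$, where \emph{all} the $x_{p,j}$ with $j\in I_Y$ vanish simultaneously. This is valid, but the step asserting ``defined and nonzero forces every exponent $C_{i_q,j}$ to vanish'' deserves a sentence of justification: with several coordinates going to zero at once, one must preclude competing exponents of opposite signs producing a $0\cdot\infty$ indeterminacy with a finite nonzero limit (for instance by approaching $S_{I_Y}$ along curves $x_{p,j}=t^{w_j}$ with arbitrary positive weights $w_j$ and using continuity and non-vanishing of $x_{q,i_q}$ to force $\sum_j w_jC_{i_q,j}=0$ for all such weights). Working one index $i$ at a time, as the paper does, sidesteps that issue entirely and is a touch cleaner, but both routes reach the same conclusion from the same underlying observation.
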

\begin{proof}
Assertions (a) and (b) are true in view of Equation \eqref{eq:partediagonal}. Let us prove (c). Let us suppose that there is an index $i \in I_Y$ such that $C_{i_q,i}\ne 0$ and let us find a contradiction. Denote by $S_i$ the stratum such that $\overline{S}_i=E_i$, that is
$$
S_i=E_i \setminus \bigcup_{j \in I\setminus \{i\}}E_j.
$$
Note that $S_i \subset V_p^* \cap V_q^*$. Given $a\in S_i$, we have that $x_{p,i}(a)=0$ and by Equation \eqref{eq:relacioncoordenadas}, we get $x_{q,i}(a)=x_{q,i_q}(a)=0$, since we are assuming $C_{i_q,i}\ne 0$. This means that $a\in E_{i_q}$, what is a contradiction.
\end{proof}

\begin{defin} \label{defin:conexion}
Let $(\mathcal{M},\mathfrak{a})$ be a monomial generalized analytic manifold, and fix two corner points $p,q \in \mathcal{Z}^0$ such that $I_p \cap I_q \ne \emptyset$. The \emph{weight connexion function from $p$ to $q$} is the map $\gamma^{pq}:I_p \cap I_q \to \mathbb{R}_{>0}$, defined by $i \mapsto C^{pq}_{ii}$, where $C^{pq}\in \mathfrak{C}_{(\mathcal{M},\mathfrak{a})}$. 
\end{defin}
In the previous definition, note that $i \in I_p \cap I_q$ if and only if $p,q \in E_i$. We use the notation $\gamma^{pq}_i=\gamma^{pq}(i)$. In view of Equation \eqref{eq:bastanaristas} and Lemma \ref{lema:triangularinferior}, if $J=I_p \cap I_q$ and $\mathcal{P}=(Y_1,Y_2,\ldots, Y_k)$ is a path of edges from $p$ to $q$ inside $E_J$, we have the relations
\begin{equation} \label{eq:conexioncaminos}
	\gamma_{|_J}^{pq}=\gamma_{|_J}^{p_{k-1}q}\boldsymbol{\cdot}\;\cdots\;\boldsymbol{\cdot} \gamma_{|_J}^{p_1p_2}\boldsymbol{\cdot}\gamma_{|_J}^{pp_1},
\end{equation}
where $p_i=Y_i \cap Y_{i+1}$, for each $i=1,2,\ldots,k-1$.

\begin{lema} \label{lema:conexioninversa}
Given a boundary component $E_i \in \mathcal{Z}^{n-1}$ and two corner points $p,q \in E_i$, we have that $\gamma^{pq}_i\gamma^{qp}_i=1$. 
\end{lema}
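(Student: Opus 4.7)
The plan is to reduce the statement to the case where $p$ and $q$ are connected through a single compact edge, and then to extract the relation $\gamma^{pq}_i\gamma^{qp}_i=1$ from the identity $C^{pq}C^{qp}=D_{\mathds{1}_{I_q}}$ using the sparsity pattern provided by Lemma~\ref{lema:triangularinferior}.

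For the reduction, observe that the hypothesis $p,q\in E_i$ means $i\in J:=I_p\cap I_q$. By the strong normal crossings assumption, the stratum closure $E_J$ is connected, so there is a path of compact edges $\mathcal{P}=(Y_1,Y_2,\ldots,Y_k)$ from $p$ to $q$ inside $E_J$. Writing $p_0=p$, $p_k=q$ and $p_j=Y_j\cap Y_{j+1}$, every intermediate corner point satisfies $i\in J\subset I_{p_j}$. Equation~\eqref{eq:conexioncaminos} applied both to the path $\mathcal{P}$ and to its reverse gives
$$
\gamma^{pq}_i\,\gamma^{qp}_i=\prod_{j=1}^{k}\gamma^{p_{j-1}p_j}_i\,\gamma^{p_jp_{j-1}}_i,
$$
so it is enough to prove the lemma when $p$ and $q$ are connected through a single compact edge.

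In that single-edge case, let $Y$, $i_p\in I_p$ and $i_q\in I_q$ be as in Lemma~\ref{lema:triangularinferior}, so that $I_p\cap I_q=I_Y$ and, in particular, $i\in I_Y$. Parts (a), (b), (c) of that lemma say that $C:=C^{pq}_Y\in\mathbb{R}^{I_q\times I_p}$ is block-structured: its $I_Y\times I_Y$ submatrix is diagonal with positive entries $\gamma^{pq}_j$, and its row indexed by $i_q$ vanishes on the $I_Y$ columns (the column indexed by $i_p$ and the entry $(i_q,i_p)$ are arbitrary). The analogous statement holds for $C^{qp}_Y$, with row $i_p$ vanishing on the $I_Y$ columns.

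Now expand the $(i,i)$-entry of $C^{pq}C^{qp}=D_{\mathds{1}_{I_q}}$ for $i\in I_Y$, summing over $j\in I_p=I_Y\cup\{i_p\}$. The contributions from $j\in I_Y\setminus\{i\}$ vanish by the off-diagonal vanishing in the $I_Y\times I_Y$ block of $C^{pq}$; the contribution from $j=i_p$ vanishes because $C^{qp}_{i_p,i}=0$ by part (c) of Lemma~\ref{lema:triangularinferior} applied to $C^{qp}_Y$; and the only surviving term is $C^{pq}_{ii}C^{qp}_{ii}=\gamma^{pq}_i\gamma^{qp}_i$. Hence $\gamma^{pq}_i\gamma^{qp}_i=1$. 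There is no real obstacle here beyond careful index bookkeeping between $I_p$, $I_q$ and $I_Y$.
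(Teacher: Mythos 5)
Your proof is correct. Both you and the paper start by using Equation~\eqref{eq:conexioncaminos} to reduce to the case where $p$ and $q$ are joined by a single compact edge $Y$, and both ultimately exploit the fact that $C^{pq}_Y$ and $C^{qp}_Y$ are inverse matrices. The difference is in the final step: the paper applies \eqref{eq:conexioncaminos} once more to the round-trip path $(Y,Y)$ from $p$ to itself, which collapses immediately to $1=\gamma_i^{pp}=\gamma_i^{pq}\gamma_i^{qp}$; you instead expand the $(i,i)$-entry of $C^{pq}C^{qp}=D_{\mathds{1}_{I_q}}$ directly and use the sparsity pattern from Lemma~\ref{lema:triangularinferior} (the $I_Y\times I_Y$ block is diagonal, and row $i_p$ of $C^{qp}$ vanishes on $I_Y$) to kill the cross terms. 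The paper's route is shorter and avoids tracking the extra index $i_p$, though it implicitly treats $(Y,Y)$ as a path from $p$ to $p$, which is a mild abuse of the stated definition of a path (the ``intermediate point'' $Y\cap Y$ is an edge, not a corner); your computation sidesteps that informality at the cost of a bit more bookkeeping. Both are sound.
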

\begin{proof}
In view of Equation \eqref{eq:conexioncaminos} it is enough to prove the result for two corner points $p,q$ connected through a compact edge $Y \subset E_i$. Now we have that $(Y,Y)$ is a path from $p$ to $p$ inside $E_i$, and in view of Equation \eqref{eq:conexioncaminos} again, we get
$$
1=\gamma_i^{pp}=\gamma_i^{pq}\gamma_i^{qp},
$$
as we wanted.
\end{proof}

We end the section by introducing the combinatorial data associated to a morphism. 

Let $\phi:(\mathcal{M}_1,\mathfrak{a}_1)\to (\mathcal{M}_2,\mathfrak{a}_2)$ be a morphism of monomial $\mathcal{A}$-manifolds. Given a corner point $p\in \mathcal{Z}^0_{\mathcal{M}_1}$ and $\bar{p}=\phi(p)$, we represent $\phi$ locally at $p$ by means of the \emph{matrix of exponents} $B^\phi_p:I_{\bar{p}}\times I_{p} \to \mathbb{R}_+$, defined by $(i,j) \mapsto b_{ij}:=b_i(j)$, where $b_i:I_p \to \mathbb{R}_+$ is as in Equation \eqref{eq:morfismos}. 
\begin{defin}
	The \emph{combinatorial data $\mathfrak{B}_\phi$ of a morphism $\phi:(\mathcal{M}_1,\mathfrak{a}_1)\to (\mathcal{M}_2,\mathfrak{a}_2)$} is the family of matrices of exponents $\{B^\phi_p\}_{p \in \mathcal{Z}^0_{\mathcal{M}_1}}$.
\end{defin}
\begin{rem}
	Once we fix an affine coordinate system $(V^\star_p,\mathbf{x}_p)$ at some corner point $p\in \mathcal{Z}^0$, we can recover the whole monomial atlas $\mathfrak{a}$ of $\mathcal{M}$ from $\mathfrak{C}_{(\mathcal{M},\mathfrak{a})}$ using Equation \eqref{eq:relacioncoordenadas}. Moreover, a morphism $\phi$ between monomial $\mathcal{A}$-manifolds is completely determined by its combinatorial data $\mathfrak{B}_\phi$.
\end{rem}

\subsection{Abundance of Standardizations of Monomial Manifolds.}

In this section, we give the definition of m-standardization, we give a characterization for its combinatorial data and we prove a result of abundance of m-standardizations of a fixed monomial $\mathcal{G}$-manifold. 

\strut

Throughout this section we fix a monomial generalized analytic manifold $(\mathcal{M},\mathfrak{a})$.

\strut

A \emph{local m-standardization} of $(\mathcal{M},\mathfrak{a})$ at a corner point $p$ is just an affine coordinate system $\mathbf{u}_p$ over $V^\star_p$ such that $\mathbf{u}_p \circ \mathbf{x}_p^{-1}$ is given by the relations
\begin{equation} \label{eq:alpha}
u_{p,i}=x_{p,i}^{\alpha_{p,i}}, \quad \alpha_{p,i} \in \mathbb{R}_{>0}, \quad i\in I_p, 
\end{equation}
where $\mathbf{x}_p \in \mathfrak{a}$. We represent this change of coordinates by means of the map $\alpha_p:I_p \to \mathbb{R}_{>0}$ defined by $i \mapsto \alpha_{p,i}$. In that way, the change of coordinates $\mathbf{u}_{p}\circ \mathbf{x}^{-1}_{p}$ is codified with the \emph{matrix of exponents} $D_{\alpha_p}:I_p \times I_p \to \mathbb{R}_{>0}$. 
\begin{defin}
An \emph{m-standardization of a monomial generalized analytic manifold $(\mathcal{M},\mathfrak{a})$} is a pair $(\mathcal{O},\mathfrak{b})$, where $\mathcal{O}$ is a standardization of $\mathcal{M}$ and $\mathfrak{b}=\{\mathbf{u}_p\}_{p\in \mathcal{Z}^0}$ is a monomial atlas of $\mathcal{N}=(M,\mathcal{O})$ such that $\mathbf{u}_p$ is local m-standardization of $(\mathcal{M},\mathfrak{a})$ for every corner point $p\in \mathcal{Z}^0$. 
\end{defin}

\begin{rem}
If $(\mathcal{O},\mathfrak{b})$ and $(\mathcal{O},\mathfrak{b}')$ are both m-standardizations of the same monomial generalized analytic manifold $(\mathcal{M},\mathfrak{a})$, then we have necessarily that $\mathfrak{b}=\mathfrak{b}'$ as we have already noted in Remark \ref{rem:variosatlasmonomiales}.
\end{rem}

\begin{defin}The \emph{combinatorial data of an m-standardization  ${(\mathcal{O},\mathfrak{b})}$} is given by the collection of maps $\Lambda_{(\mathcal{O},\mathfrak{b})}=\{\alpha_p\}_{p \in \mathcal{Z}^0}$, where, for each corner point $p\in \mathcal{Z}^0$, the map $\alpha_p$ represents, as in Equation \eqref{eq:alpha}, the change of coordinates  $\mathbf{u}_p \circ \mathbf{x}_p^{-1}$, with  $\mathbf{x}_p \in \mathfrak{a}$, and $\mathbf{u}_p \in \mathfrak{b}$. 
\end{defin}

\begin{lema} \label{lema:caractestandmonomial}
A collection of maps $\Lambda=\{\alpha_p: I_p \to \mathbb{R}_{>0}\}_{p \in \mathcal{Z}^0}$ is the combinatorial data of an m-standardization of $(\mathcal{M},\mathfrak{a})$  if and only if for any pair of corner points $p,q \in \mathcal{Z}^0$ the following relations hold:
	\begin{equation}\label{eq:realizable}
		\alpha_{p,\ell}=\gamma^{pq}_\ell\alpha_{q,\ell}, \; \text{ for all }\, \ell \in I_p\cap I_q,
	\end{equation}
	where $\gamma^{pq}$ is the weight connexion function from $p$ to $q$.
\end{lema}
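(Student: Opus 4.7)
The plan is to reduce both directions of the lemma to the matrix identity
$$
D^{pq} \;=\; D_{\alpha_q}\,C^{pq}\,D_{\alpha_p}^{-1}, \qquad D^{pq}_{ij}=\frac{\alpha_{q,i}\,C^{pq}_{ij}}{\alpha_{p,j}},
$$
derived by direct substitution of $u_{p,i}=x_{p,i}^{\alpha_{p,i}}$ into
$u_{q,i}=x_{q,i}^{\alpha_{q,i}}=\prod_{j\in I_p}x_{p,j}^{\alpha_{q,i}C^{pq}_{ij}}$. Thus $D^{pq}$ is the matrix of exponents of the prospective chart-change $\mathbf{u}_q\circ\mathbf{u}_p^{-1}$, and the lemma reduces to the statement that this chart-change is a standard analytic isomorphism exactly when $D^{pq}_{\ell\ell}=1$ for every $\ell\in J:=I_p\cap I_q$.

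For the necessity, assume $(\mathcal{O},\mathfrak{b})$ is an m-standardization, so $\mathbf{u}_q\circ\mathbf{u}_p^{-1}$ is a monomial chart-change inside the standard analytic manifold $\mathcal{N}=(M,\mathcal{O})$. Applying Equation \eqref{eq:partediagonal} to $\mathcal{N}$ at a point of the stratum $S_J$ already yields $D^{pq}_{\ell\ell}>0$ and $D^{pq}_{i\ell}=0$ for distinct $i,\ell\in J$. Since the only variables $u_{p,j}$ that can vanish on the overlap are those with $j\in J$, standard analyticity of $u_{q,\ell}=\prod_j u_{p,j}^{D^{pq}_{\ell j}}$ forces $D^{pq}_{\ell\ell}\in\mathbb{Z}_{>0}$; the same argument applied to the inverse chart-change, whose $J\times J$-block is diagonal with entries $1/D^{pq}_{\ell\ell}$, gives the reciprocals in $\mathbb{Z}_{>0}$ as well. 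Hence $D^{pq}_{\ell\ell}=1$, which combined with $C^{pq}_{\ell\ell}=\gamma^{pq}_\ell$ yields exactly $\alpha_{p,\ell}=\gamma^{pq}_\ell\alpha_{q,\ell}$.

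For the sufficiency, assume the relations \eqref{eq:realizable} hold. At each corner $p$ define $\mathbf{u}_p:V^\star_p\to\mathbb{R}^n_+$ by $u_{p,i}=x_{p,i}^{\alpha_{p,i}}$; this is a homeomorphism because $t\mapsto t^{\alpha_{p,i}}$ is so on $\mathbb{R}_+$. I check that $\mathbf{u}_q\circ\mathbf{u}_p^{-1}$ is a standard analytic isomorphism on $\mathbf{u}_p(V^\star_p\cap V^\star_q)$ by inspecting $D^{pq}_{ij}$ block by block. On the $J\times J$-block, Equation \eqref{eq:partediagonal} applied to $(\mathcal{M},\mathfrak{a})$ together with the hypothesis gives $D^{pq}|_{J\times J}=\mathrm{Id}$. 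For $i\in I_q\setminus J$, the fact that $x_{q,i}$ is strictly positive on $S_J$ (since $i\notin I_a$ for any $a\in S_J$) forces $C^{pq}_{ij}=0$, and hence $D^{pq}_{ij}=0$, for every $j\in J$; the remaining entries multiply only variables $u_{p,j}$ with $j\in I_p\setminus J$, which stay strictly positive on the overlap, so arbitrary non-negative real exponents define standard analytic functions. A symmetric argument for the inverse chart-change completes the verification that it is a standard analytic diffeomorphism. Thus $\mathfrak{b}=\{\mathbf{u}_p\}_{p\in\mathcal{Z}^0}$ determines a standardization $\mathcal{O}\subset\mathcal{G}_M$, and since $x_{p,i}=u_{p,i}^{1/\alpha_{p,i}}$ is generalized analytic in $\mathbf{u}_p$, the enrichment $(M,\mathcal{O})^\epsilon$ recovers $\mathcal{G}_M$, confirming that $(\mathcal{O},\mathfrak{b})$ is an m-standardization of $(\mathcal{M},\mathfrak{a})$ with combinatorial data $\Lambda$.

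The main difficulty is the diagonal-equals-one step in the necessity direction: upgrading $D^{pq}_{\ell\ell}$ from a positive real (given by Equation \eqref{eq:partediagonal}) to exactly $1$ requires combining integrality of the monomial chart-change, restricted to the possibly-vanishing $J$-indexed variables, with the same integrality applied to its inverse. Everything else is formal combinatorial bookkeeping together with a pointwise identification of sheaves.
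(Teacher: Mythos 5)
Your proof follows the same core mechanism as the paper's: you introduce the matrix $D^{pq}=D_{\alpha_q}C^{pq}D_{\alpha_p}^{-1}$ (the paper's $A$) and in the necessity direction you force each diagonal entry $D^{pq}_{\ell\ell}$, $\ell\in J=I_p\cap I_q$, to be a positive integer whose reciprocal is also a positive integer, hence equal to $1$. The genuine difference is organizational: the paper reduces to pairs of corners joined by a compact edge, via Equation \eqref{eq:bastanaristas} and \eqref{eq:conexioncaminos}, and then cites Lemma \ref{lema:triangularinferior} for the vanishing of the $(I_q\setminus J)\times J$ block and Lemma \ref{lema:conexioninversa} for the reciprocal relation $\gamma^{pq}_\ell\gamma^{qp}_\ell=1$; you instead work with an arbitrary pair $(p,q)$ and re-derive the $(I_q\setminus J)\times J$ vanishing directly from the positivity of $x_{q,i}$ on $S_J$. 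This makes your argument slightly more self-contained, at the cost of re-establishing facts the paper already has on record. Both routes are sound; reducing to edges keeps each verification a rank-one bookkeeping step, while your direct route avoids the path-composition formula entirely.

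One step you should spell out more carefully: in the necessity direction you assert that the $J\times J$ block of $D^{qp}=(D^{pq})^{-1}$ is diagonal with entries $1/D^{pq}_{\ell\ell}$. This is precisely Lemma \ref{lema:conexioninversa} after conjugating by $D_{\alpha}$, and it does not follow merely from Equation \eqref{eq:partediagonal}: you need the vanishing of the $(I_q\setminus J)\times J$ block of $D^{pq}$ (which you establish, but only later, in the sufficiency half) in order to see that in the identity $\sum_k D^{qp}_{\ell k}D^{pq}_{k\ell}=1$ only the $k=\ell$ term survives. Either cite Lemma \ref{lema:conexioninversa} explicitly, or move the positivity-on-$S_J$ argument before the necessity step so that the block-triangularity of $D^{pq}$ (and, symmetrically, of $D^{qp}$) is available when you invoke it.
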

\begin{proof}
Let us assume first that $\Lambda=\Lambda_{(\mathcal{O},\mathfrak{b})}$, where $(\mathcal{O},\mathfrak{b})$ is an m-standardization of $(\mathcal{M},\mathfrak{a})$. Let us denote $\mathcal{N}=(M,\mathcal{O})$ and let $\mathfrak{C}_{(\mathcal{N},\mathfrak{b})}$ be the combinatorial data of the monomial standard analytic manifold $(\mathcal{N},\mathfrak{b})$. In view of Equation \eqref{eq:conexioncaminos}, it is enough to prove Equation \eqref{eq:realizable} for two corner points $p$ and $q$ connected through a compact edge $Y$. Let us consider the affine charts $\mathbf{u}_p,\mathbf{u}_q\in \mathfrak{b}$ at $p$ and $q$ respectively. The change of coordinates $\mathbf{u}_p\circ \mathbf{u}_q^{-1}$ is codified by means of the matrix of exponents $A=A_Y^{pq} \in\mathfrak{C}_{(\mathcal{N},\mathfrak{b})}$. This change must be standard analytic in its domain of definition $\mathbf{u}_q(V^\star_p \cap V^\star_q)=\mathbb{R} \times \mathbb{R}^{n-1}_+$, and this implies
\begin{equation} \label{eq:enteros}
	A_{i \ell}\in \mathbb{Z}_{+}, \quad A^{-1}_{j \ell}\in \mathbb{Z}_{+}, \quad i \in I_{q}, \, j \in I_{p}, \, \ell\in I_Y.
\end{equation}
Let $C=C_Y^{pq} \in \mathfrak{C}_{(\mathcal{M},\mathfrak{a})}$ and $\alpha_p,\alpha_q \in \Lambda_{(\mathcal{O},\mathfrak{b})}$. Note that $A$ is obtained as
$$
A=D_{\alpha_q}CD_{\alpha_p}^{-1}:I_q \times I_p \to \mathbb{R}.
$$
When $\ell \in I_p\cap I_q=I_Y$, in view of Lemma \ref{lema:triangularinferior} and Lemma \ref{lema:conexioninversa}, we have that
$$
A_{\ell\ell}=\frac{\gamma^{pq}_\ell\alpha_{q,\ell}}{\alpha_{p,\ell}}\in \mathbb{Z}_{+}, \quad (A^{-1})_{\ell\ell}=(A^{qp}_Y)_{\ell\ell}=\frac{\gamma^{qp}_\ell\alpha_{p,\ell}}{\alpha_{q,\ell}}=\frac{\alpha_{p,\ell}}{\gamma^{pq}_\ell\alpha_{q,\ell}}=1/A_{\ell\ell}\in \mathbb{Z}_{+},
$$
which shows $A_{\ell\ell}=(A^{-1})_{\ell\ell}=1$. From here we get $\alpha_{p,\ell}=\gamma^{pq}_\ell\alpha_{q,\ell}$, and hence $\Lambda$ satisfies Equation \eqref{eq:realizable} as we wanted.
	
Assume now that $\Lambda$ satisfies Equation \eqref{eq:realizable} for any pair of corner points $p,q \in \mathcal{Z}^0$. For each corner point $p\in \mathcal{Z}^0$, consider affine coordinates $\mathbf{u}_p$ defined on $V^\star_p$ such that the change of coordinates $\mathbf{u}_p\circ\mathbf{x}^{-1}_p$ satisfies $u_{p,j}=x_{p,j}^{\alpha_{p,j}}$, for all $j \in I_p$, where $\alpha_p \in \Lambda$ and $\mathbf{x}_p \in \mathfrak{a}$. In that way, we get a new monomial atlas $\mathfrak{b}=\{\mathbf{u}_p\}_{p\in \mathcal{Z}^0}$ of $\mathcal{M}$. Let us see that the changes of coordinates $\mathbf{u}_q \circ \mathbf{u}_p^{-1}$ are standard analytic for any pair of corner points $p$ and $q$. In view of Equation \eqref{eq:bastanaristas} it is enough to suppose that $p$ and $q$ are connected through an edge $Y$. Defining the matrix $A=D_{\alpha_q}CD^{-1}_{\alpha_p}$, the change of coordinates $\mathbf{u}_q \circ \mathbf{u}_p^{-1}$ is given by
$$
u_{q,i}=\prod_{j\in I_p}u_{p,j}^{A_{ij}}, \; \mbox{ for any } i\in I_q.
$$
It suffices to show that $A$ satisfies the conditions in Equation \eqref{eq:enteros}. Indeed, if $A_{i\ell} \in \mathbb{Z}_+$ for $i\in I_q$ and for all $\ell\in I_Y$, then $u_{q,i}$ in the above equation is standard analytic in the domain $V^\star_q\cap \{u_{p,i_p}\ne 0\}=V^\star_q\cap V^\star_p$.

Applying Lemma \ref{lema:triangularinferior} we get that $A_{r\ell}=0$ and also that $(A^{-1})_{r \ell }=0$, for all $r,\ell \in I_Y$ with $r \ne \ell$. Moreover, the mentioned result assures that $C_{i_q\ell}=0$ and that $(C_{i_p\ell})^{-1}=C^{qp}_{i_p\ell}=0$, for all $\ell \in I_Y$; hence, for any such an index $\ell\in I_Y$ we obtain
$$
A_{i_q\ell}=C_{i_q\ell}\alpha_{q,i_q}/\alpha_{p,\ell}=0, \quad (A^{-1})_{i_p\ell}=(C_{i_p\ell})^{-1}\alpha_{p,i_p}/\alpha_{q,\ell}=0.
$$
Also in view of Lemma \ref{lema:triangularinferior} we get
$$
A_{\ell\ell}=\frac{C_{\ell\ell}\alpha_{q,\ell}}{\alpha_{p,\ell}}=\frac{\gamma_{\ell}^{pq}\alpha_{q,\ell}}{\alpha_{p,\ell}}, \quad (A^{-1})_{\ell\ell}=\frac{(C^{-1})_{\ell\ell}\alpha_{p,\ell}}{\alpha_{q,\ell}}=\frac{\gamma_{\ell}^{qp}\alpha_{p,\ell}}{\alpha_{q,\ell}},
$$
for all $\ell \in I_Y$, and applying Lemma \ref{lema:conexioninversa} and the conditions in Equation \eqref{eq:realizable} we conclude $A_{\ell\ell}=(A^{-1})_{\ell\ell}=1$. Once that we know that all the changes of coordinates $\mathbf{u}_q\circ \mathbf{u}_p^{-1}$ are standard analytic, we have that $\mathfrak{b}$ is an atlas defining a standard analytic structure $\mathcal{N}=(M,\mathcal{O})$ over $M$, where $\mathcal{M}=(M,\mathcal{G}_M)$; thus $\mathcal{O}\subset \mathcal{G}_M$ is a standardization of $\mathcal{M}$. Moreover, by definition of $\mathfrak{b}$, we have that $(\mathcal{O},\mathfrak{b})$ is an m-standardization of $(\mathcal{M},\mathfrak{a})$ with $\Lambda_{(\mathcal{O},\mathfrak{b})}=\Lambda$.
\end{proof}
In the sequel, a collection of maps $\Lambda=\{\alpha_p:I_p \to \mathbb{R}_{> 0}\}_{p \in \mathcal{Z}^0}$  is called \emph{realizable for $(\mathcal{M},\mathfrak{a})$} if Equation \eqref{eq:realizable} holds for any pair of corner points $p,q \in \mathcal{Z}^0$.

\begin{defin}
Let $\mathbf{u}_p$ be a local m-standardization of $(\mathcal{M},\mathfrak{a})$ at a given corner point $p\in \mathcal{Z}^0$. An \emph{extension of $\mathbf{u}_p$} is a (global) m-standardization $(\mathcal{O},\mathfrak{b})$ of $(\mathcal{M},\mathfrak{a})$ such that $\mathbf{u}_p \in \mathfrak{b}$, we say also that \emph{$(\mathcal{O},\mathfrak{b})$ extends $\mathbf{u}_p$}. We denote by $\mathcal{E}(\mathbf{u}_p)$ the set of m-standardizations extending $\mathbf{u}_p$.
\end{defin}

\begin{prop} \label{prop:extension}
Let $(\mathcal{M},\mathfrak{a})$ be a monomial generalized analytic manifold. Then:
\begin{enumerate}
	\item[a)] There is a bijection between the set of m-standardizations of $(\mathcal{M},\mathfrak{a})$  and $\mathbb{R}^N_{>0}$, where $N$ is the number of boundary components of $\partial M$.
	\item[b)] Given a corner point $p\in \mathcal{Z}^0$ and a local m-standardization $\mathbf{u}_p$ at $p$, there is a bijective map $\mathbb{R}^{N-n}_{>0} \to \mathcal{E}(\mathbf{u}_p)$, where $n$ is the dimension of $\mathcal{M}$.
\end{enumerate}
\end{prop}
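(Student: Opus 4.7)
The strategy is to invoke the combinatorial characterization of m-standardizations given by Lemma~\ref{lema:caractestandmonomial}: an m-standardization corresponds bijectively to a realizable collection $\Lambda=\{\alpha_p:I_p\to\mathbb{R}_{>0}\}_{p\in\mathcal{Z}^0}$ satisfying Equation~\eqref{eq:realizable}. Both assertions then become a parameter count on the space of such $\Lambda$. A preliminary observation that I would use is that every boundary component $E_i$ contains at least one corner point: the affine charts $\{V^{\star}_p\}_{p\in\mathcal{Z}^0}$ cover $M$, while the stratum $S_{\{i\}}$ (whose closure is $E_i$) can sit inside $V^{\star}_p$ only when $i\in I_p$, i.e., only when $p\in E_i$.

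For part~(a), I would fix for each $i\in I$ a corner point $p_i\in E_i\cap\mathcal{Z}^0$ and define
\[
\Phi:\{\text{m-standardizations of }(\mathcal{M},\mathfrak{a})\}\longrightarrow \mathbb{R}^N_{>0},\qquad \Lambda\mapsto (\alpha_{p_i,i})_{i\in I}.
\]
Injectivity follows at once from Equation~\eqref{eq:realizable} combined with Lemma~\ref{lema:conexioninversa}: for any $q\in E_i\cap\mathcal{Z}^0$, $\alpha_{q,i}$ is forced by realizability to equal $\alpha_{p_i,i}\cdot\gamma^{qp_i}_i$. For surjectivity, given $(t_i)\in\mathbb{R}^N_{>0}$, I would set $\alpha_{p,i}:=t_i\cdot\gamma^{pp_i}_i$ for each corner $p$ and each $i\in I_p$, and then check the realizability condition $\alpha_{p,\ell}=\gamma^{pq}_\ell\alpha_{q,\ell}$ for $\ell\in I_p\cap I_q$. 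This reduces to the cocycle identity
\[
\gamma^{pp_\ell}_\ell=\gamma^{pq}_\ell\cdot\gamma^{qp_\ell}_\ell,\qquad \ell\in I_p\cap I_q\cap I_{p_\ell}.
\]

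For part~(b), I would specialize the parameterization of part~(a) by choosing the distinguished corner points so that $p_i=p$ for every $i\in I_p$, which is legitimate because $p\in E_i$ for each $i\in I_p$. Under this choice, the local m-standardization $\mathbf{u}_p$ prescribes the values $t_i=\alpha_{p,i}$ for $i\in I_p$, fixing exactly $n$ of the $N$ coordinates in the target $\mathbb{R}^N_{>0}$. The remaining $N-n$ coordinates $t_i$ with $i\in I\setminus I_p$ can be chosen freely, yielding the claimed bijection $\mathbb{R}^{N-n}_{>0}\to\mathcal{E}(\mathbf{u}_p)$.

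The main obstacle is verifying the cocycle identity above for the weight connexion functions. While Equation~\eqref{eq:conexioncaminos} is stated for paths inside $E_{I_p\cap I_q}$, its proof (via Equation~\eqref{eq:bastanaristas} and Lemma~\ref{lema:triangularinferior}) in fact yields the diagonal product formula $\gamma^{pq}_\ell=\prod_j \gamma^{p_{j-1}p_j}_\ell$ for any edge-path entirely contained in a single component $E_\ell$ with $\ell\in I_p\cap I_q$; Lemma~\ref{lema:triangularinferior}(c) ensures that all off-diagonal contributions to $(\ell,\ell)$ in the matrix product $C^{pq}=C^{p_{k-1}q}\cdots C^{pp_1}$ vanish. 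Concatenating paths from $p$ to $q$ and from $q$ to $p_\ell$ inside the connected set $E_\ell$ then yields the desired cocycle identity; the needed connectedness of the graph of corner points and compact edges inside $E_\ell$ follows from the strong normal crossings hypothesis together with the connectedness of $E_\ell$.
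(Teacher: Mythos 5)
Your proof is correct and matches the paper's argument: both fix a reference corner point $p_i$ (the paper calls it $q_i$) in each boundary component $E_i$, parameterize realizable collections $\Lambda$ by the tuple $(\alpha_{p_i,i})_{i\in I}\in\mathbb{R}^N_{>0}$, and obtain part~(b) by specializing to $p_i=p$ for $i\in I_p$. You are in fact slightly more careful than the paper in two places---justifying that each $E_i$ contains a corner point, and noting that the cocycle identity $\gamma^{pp_\ell}_\ell=\gamma^{pq}_\ell\gamma^{qp_\ell}_\ell$ requires the diagonal product formula along edge-paths inside $E_\ell$ rather than merely inside $E_{I_p\cap I_q}$ as literally stated in Equation~\eqref{eq:conexioncaminos}---but the underlying reasoning is the same.
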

\begin{proof}
We start with the proof of the first assertion. Let $I$ be the set of indices labelling the components of $\partial M$, that is $\partial M=\cup_{i\in I} E_i$, where $N=\#I$, and let us fix a collection of corner points $\mathfrak{q}=\{q_i\}_{i\in I}$ in such a way that $q_i \in E_i$ for each $i \in I$. Given a map $\beta \in \mathbb{R}^I_{>0}$, we take $\Lambda_\beta=\{\alpha_p\}_{p\in \mathcal{Z}^0}$ to be the family of maps defined by
$$
\alpha_{p,\ell}=\gamma^{pq_\ell}_\ell\beta_\ell, \; \text{ for all } p\in \mathcal{Z}^0 \, \text{ and }\, \ell\in I_p. 
$$
Let us see that $\Lambda_\beta$ is a realizable family of maps; that is, given two corner points $p$ and $q$ in $\mathcal{Z}^0$ we want to prove that Equation \eqref{eq:realizable} holds. Let us fix such a corner points $p$ and $q$, and let $\ell\in I_p \cap I_q$. By Equation \eqref{eq:conexioncaminos} we have that $\gamma^{pq_\ell}_\ell\gamma^{q_\ell q}_\ell=\gamma^{pq}_\ell$. Moreover, by the definition of $\alpha_{q,\ell}$ and as a consequence of Lemma \ref{lema:conexioninversa} we have that $\beta_\ell=\gamma^{q_\ell q}_\ell\alpha_{q,\ell}$. Then we obtain
$$
\alpha_{p,\ell}=\gamma^{pq_\ell}_\ell\beta_\ell=\gamma^{pq_\ell}_\ell\gamma^{q_\ell q}_\ell\alpha_{q,\ell}=\gamma_\ell^{pq}\alpha_{q,\ell}
$$
as we wanted. Now, in view of Lemma \ref{lema:caractestandmonomial}, there exists a unique m-standardization $(\mathcal{O}^\beta,\mathfrak{b}^\beta)$ with $\Lambda_{(\mathcal{O}^\beta,\mathfrak{b}^\beta)}=\Lambda_{\beta}$. Finally, note that the map 
$$
\Psi_{\mathfrak{q}}: \mathbb{R}^I_{>0} \to \left\{\begin{array}{c}
	\text{m-standardizations} \\ \text{ of } (\mathcal{M},\mathfrak{a})
\end{array}
\right\}, \quad \beta \mapsto (\mathcal{O}^\beta,\mathfrak{b}^\beta)
$$
is a bijection. Indeed, if $\beta \ne \beta'$, we have that $\Lambda_\beta \ne \Lambda_{\beta'}$ and hence $(\mathcal{O}^\beta,\mathfrak{b}^\beta) \ne (\mathcal{O}^{\beta'},\mathfrak{b}^{\beta'})$. On the other hand, given an m-standardization $(\mathcal{O},\mathfrak{b})$ with combinatorial data $\Lambda=\{\alpha_p\}_{p\in \mathcal{Z}^0}$, we have that $(\mathcal{O},\mathfrak{b})=\Psi_{\mathfrak{q}}(\beta)$, where $\beta$ is defined by $\beta_i=\alpha_{q_i,i}$, for all $i\in I$. The proof of a) is finished.

Let us prove now the second assertion b). Denote by $\alpha_p:I_p \to \mathbb{R}_{>0}$ the map of exponents defining $\mathbf{u}_p$, that is, for all $i\in I_p$ we have
$$
u_{p,i}=x_{p,i}^{\alpha_{p,i}},\; \text{ where }\mathbf{x}_p\in \mathfrak{a}.
$$
Consider the inclusion map $i_{\alpha_p}:\mathbb{R}^{I\setminus I_p}_{>0} \hookrightarrow \mathbb{R}^{I}_{>0}$, defined by 
$$
\delta \mapsto i_{\alpha_p}(\delta):=\beta^\delta, \; \text{ where } \beta^\delta_i=\left\{
\begin{array}{lcl}
\delta_i & \text{ if } & i\in I\setminus I_p, \\
\alpha_{p,i}& \text{ if } & i\in I_p.
\end{array}
\right.
$$
Take $\mathfrak{q}_p=\{q_i\}_{i\in I}$ such that $q_i=p$, for each $i \in I_p$, and $q_i \in E_i$, for all $i\in I\setminus I_p$. By definition, we note that $\Psi_{\mathfrak{q}_p}(\beta) \in \mathcal{E}(\mathbf{u}_p)$ if and only if $\beta|_{I_p}=\alpha_{p}$, or equivalently $\beta =i_{\alpha_p} (\beta |_{I\setminus I_p})$. In other words, we have the equality
$$
\mathcal{E}(\mathbf{u}_p)=\text{Im}(\Psi_{\mathfrak{q}_p} \circ i_{\alpha_p}),
$$
and hence we have the bijection $\mathbb{R}_{>0}^{I\setminus I_p} \to \mathcal{E}(\mathbf{u}_p)$ mapping $\delta$ into $\Psi_{\mathfrak{q}_p}(i_{\alpha_p}(\delta))$. We finish just by noting that $\#I_p=n$.
\end{proof}

\subsection{The Monomial Voûte Etoilée}
We devote this section to give the definition of m-combinatorial blowing-up and to introduce the concept of ``monomial voûte étoilée'' over an m-manifold , whose elements, called m-stars, are sequences of monomial blowing-ups starting from that m-manifold. The terminology is inspired by Hironaka \cite{Hir}. 

\strut

Let $(\mathcal{M},\mathfrak{a})$ be a monomial generalized analytic manifold. An \emph{m-combinatorial center of blowing-up for $(\mathcal{M},\mathfrak{a})$} is a tripet $(Z,\mathcal{O},\mathfrak{b})$, where $Z$ is a combinatorial geometrical center for $\mathcal{M}$ and $(\mathcal{O},\mathfrak{b})$ is an m-standardization of $(\mathcal{M},\mathfrak{a})$. Given such an m-combinatorial center $(Z, \mathcal{O},\mathfrak{b})$, we consider the blowing-up $\pi_\xi:\mathcal{M}_\xi \to \mathcal{M}$ with center $\xi=(Z,\mathcal{O})$. Let $I$ be an index set labelling the components of $\partial M$. We write $\infty \notin I$ to label the exceptional divisor $E_\infty=\pi_\xi^{-1}(Z)$, thus $I_\xi=I \cup \{\infty\}$ is an index set labelling the components of $\partial M_\xi$. More precisely, given $i\in I$, it represents both the boundary components 
$$
E_i \subset \partial M \;\text{ and }\; E'_i=\overline{\pi^{-1}_\xi(E_i\setminus Z)} \subset \partial M_\xi,
$$
belonging to $\mathcal{Z}_{\mathcal{M}}^{n-1}$ and $\mathcal{Z}_{\mathcal{M}_\xi}^{n-1}$, respectively. The index $\infty\in I_\xi$ represents $E_\infty\in \mathcal{Z}_{\mathcal{M}_\xi}^{n-1}$.

\begin{prop} \label{prop:explosionmonomial}
 There is a monomial atlas $\mathfrak{a}_\xi$ for $\mathcal{M}_\xi$, in such a way that $\pi_\xi$ defines a morphism of monomial $\mathcal{G}$-manifolds from $(\mathcal{M}_\xi,\mathfrak{a}_\xi)$ to $(\mathcal{M},\mathfrak{a})$. 
\end{prop}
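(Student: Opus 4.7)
\emph{Plan.} The idea is to import the natural monomial atlas of the classical real blowing-up $\pi_Z:\widetilde{\mathcal{N}}\to\mathcal{N}$, where $\mathcal{N}=(M,\mathcal{O})$, and use it as $\mathfrak{a}_\xi$ on the enrichment $\mathcal{M}_\xi=\widetilde{\mathcal{N}}^{\epsilon}$. By Lemma~\ref{lema:caractestandmonomial} the collection $\mathfrak{b}=\{\mathbf{u}_p\}$ is already a monomial atlas of $\mathcal{N}$, with integer transition matrices $A^{pq}=D_{\alpha_q}C^{pq}D_{\alpha_p}^{-1}\in\mathfrak{C}_{(\mathcal{N},\mathfrak{b})}$, and since $Z=E_{I_Z}$ is cut out in any chart $\mathbf{u}_p$ with $I_Z\subset I_p$ by $\{u_{p,j}=0:j\in I_Z\}$, the subset $Z$ is a combinatorial geometric center for $\mathcal{N}$ as well.

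The first step is to describe $\pi_Z$ in the affine charts $\mathbf{u}_p$. Over a corner $p\notin Z$ the map is the identity on a neighborhood and I take $\tilde p:=p$, $\mathbf{u}_{\tilde p}:=\mathbf{u}_p$. Over a corner $p$ with $I_Z\subset I_p$ one obtains $\#I_Z$ new corners $\tilde p_i$ ($i\in I_Z$), with index sets $I_{\tilde p_i}=(I_p\setminus\{i\})\cup\{\infty\}$ (reserving the label $\infty$ for $E_\infty=\pi_\xi^{-1}(Z)$), and standard affine coordinates $\mathbf{u}_{\tilde p_i}$ determined by the classical monomial substitution
\begin{equation*}
u_{p,i}=u_{\tilde p_i,\infty},\quad u_{p,j}=u_{\tilde p_i,\infty}\,u_{\tilde p_i,j}\;(j\in I_Z\setminus\{i\}),\quad u_{p,\ell}=u_{\tilde p_i,\ell}\;(\ell\in I_p\setminus I_Z).
\end{equation*}
A short dimension count (any corner of $\mathcal{M}_\xi$ that meets $E_\infty$ must lie over a corner of $\mathcal{M}$ that is contained in $Z$) shows that $\mathcal{Z}^0_{\mathcal{M}_\xi}$ is exhausted by the points so constructed.

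I would then declare $\mathfrak{a}_\xi:=\{\mathbf{u}_{\tilde p}\}_{\tilde p\in\mathcal{Z}^0_{\mathcal{M}_\xi}}$. Each such chart is standard analytic, hence in particular generalized analytic, so it is a legitimate affine chart for $\mathcal{M}_\xi$. Monomiality of $\mathfrak{a}_\xi$ then reduces to observing that any transition $\mathbf{u}_{\tilde q}\circ\mathbf{u}_{\tilde p}^{-1}$, factored along a path of compact edges in the blown-up manifold in the spirit of \eqref{eq:bastanaristas}, is a product of two types of purely monomial maps: the transition matrices of $\mathfrak{C}_{(\mathcal{N},\mathfrak{b})}$ and the blowing-up substitutions above. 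Monomiality of $\pi_\xi$ as a morphism $(\mathcal{M}_\xi,\mathfrak{a}_\xi)\to(\mathcal{M},\mathfrak{a})$ follows by composing $x_{p,j}=u_{p,j}^{1/\alpha_{p,j}}$ (which is part of the m-standardization data) with the monomial expression of $u_{p,j}$ in $\mathbf{u}_{\tilde p}=\mathbf{x}_{\tilde p}$, producing positive real exponents as required by \eqref{eq:morfismos}.

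The main obstacle is essentially bookkeeping: one must verify that $\partial M_\xi=\bigl(\bigcup_{i\in I}E'_i\bigr)\cup E_\infty$ is again a strong normal crossings divisor, so that the bijection $\mathcal{H}_{\mathcal{M}_\xi}\leftrightarrow\mathcal{Z}_{\mathcal{M}_\xi}$ remains valid and the open sets $V^\star_{\tilde p}$ used to define the affine charts behave correctly. This is a chart-by-chart check on connectedness of the intersections $E'_J\cap E_\infty$ using the explicit substitution above, and requires no analytic input beyond the real blowing-up construction.
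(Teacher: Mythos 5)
Your approach is essentially the same as the paper's: use the m-standardization $(\mathcal O,\mathfrak b)$ to pass to the classical blowing-up of the standard manifold $\mathcal N=(M,\mathcal O)$, import its natural affine charts, and check that the resulting atlas is monomial and that $\pi_\xi$ is a monomial morphism. The argument is correct.

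The one genuine (if minor) divergence is your choice of normalization. You take $\mathfrak a_\xi:=\{\mathbf u_{\tilde p}\}$ to be the classical blowing-up atlas itself, so that at a corner $p'\notin E_\infty$ the morphism matrix $B_{p'}$ is $D_{\alpha_p}^{-1}$ rather than the identity, and at a corner $p'\in E_\infty$ the matrix carries the extra diagonal factor $D_{\alpha_p}^{-1}$ in front of the paper's matrix \eqref{eq:blowup}. The paper instead rescales each new chart coordinate-wise (this is the ``lot of variation'' permitted by Remark~\ref{rem:variosatlasmonomiales}) precisely so that $B_{p'}=D_{\mathds{1}_{I_p}}$ away from $E_\infty$ and so that the matrix over $E_\infty$ has the clean block-triangular form of \eqref{eq:blowup}. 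This normalization is harmless for the proposition itself, but it is what feeds directly into Equations \eqref{eq:mfuncionexplosion} and the explicit $\lambda'_{p'}$-computations in the proof of the principalization invariant drop; with your choice you would have to carry the extra $\alpha_p$-factors through those later computations. So: same idea, correct proof, but a less convenient chart convention for what comes next in the paper.

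One small bookkeeping remark: the factorization of transitions $\mathbf u_{\tilde q}\circ\mathbf u_{\tilde p}^{-1}$ ``in the spirit of \eqref{eq:bastanaristas}'' is the right idea, but strictly speaking that equation presupposes a monomial atlas; what you actually need is simply that each edge transition in the blown-up manifold is monomial (either an integer transition of $\mathfrak C_{(\mathcal N,\mathfrak b)}$ away from $E_\infty$, or the standard substitution you wrote along $E_\infty$), after which composability along paths gives monomiality of all corner-to-corner transitions. The paper glosses over this point the same way you do, so no real gap, just phrasing.
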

\begin{proof}
Take a corner point $p'$ in $\mathcal{M}_\xi$ and let $p=\pi_\xi(p')$. Note that $p$ is a corner point in $\mathcal{M}$. Let $\mathbf{x}_p \in \mathfrak{a}$ be the affine chart of the atlas $\mathfrak{a}$ at $p$. We distinguish two situations:

\emph{Case $p'\notin E_\infty$}. We have that $I_{p'}=I_p$ and the blowing-up $\pi_\xi$ induces an isomorphism between $V_p^*$ and $V_{p'}^*$. We take affine coordinates $\mathbf{x}'_{p'}$ over $V^\star_{p'}$ defined by
$$
\tilde{x}'_{p',i}=x_{p,i} \circ \pi_\xi|_{V_{p'}^*}, \; \text{for all } i\in I_p
$$
Thus, the expression of $\pi_\xi$ in coordinates $\mathbf{x}'_{p'}$ and $\mathbf{x}_p$ is purely monomial.
This expression can be codified with the matrix of exponents $B_{p'}:I_{p}\times I_{p} \to \mathbb{R}_{\geq 0}$ given by
\begin{equation}\label{eq:identidad}
	B_{p'}(i,j)=\delta_{ij}, \quad i,j \in I_p,
\end{equation}
where $\delta_{ij}$ is the Kronecker delta symbol. In other words, $B_{p'}=D_{\mathds{1}_{I_p}}$.

\emph{Case $p'\in E_\infty$}. We have that $I_{p'}=I_p\setminus \{j\}\cup \{\infty\}$, for some $j \in I_Z$ (see for instance \cite{Mol} for details in the combinatorial treatment of blowing-ups). By hypothesis the pair $(\mathcal{O},\mathfrak{b})$ is an m-standardization of $(\mathcal{M},\mathfrak{a})$, in particular $\mathfrak{b}$ is a monomial atlas of the standard analytic manifold $\mathcal{N}=(M,\mathcal{O})$. Using this information together with the definition of blowing-up centered at $\xi$, we get that there exists affine coordinate systems $\mathbf{x}'_{p'}$ defined in $V^\star_{p'}$ such that the map $\mathbf{x}'_{p'} \circ \pi_\xi \circ \mathbf{x}^{-1}_p$ is purely monomial with associated matrix of exponents $B_{p'}:{I_p \times I_{p'}} \to \mathbb{R}_+$ given by
\begin{equation} \label{eq:blowup}
(r,s) \mapsto \left\{
\begin{array}{lclclccc}
	1 &\text{ if } & r=s & \text{ and } & r\in I_p \setminus\{j\}, \\
	\alpha_{p,j}/\alpha_{p,r} & \text{ if } & s=\infty& \text{ and }& r \in I_Z, &  \\
	0 & & \text{otherwise}.
\end{array}	
\right.
\end{equation}
where $\alpha_p \in \Lambda_{(\mathcal{O},\mathfrak{b})}$. With an appropriated order of rows and columns, $B_{p'}$ can be seen as the upper triangular matrix 
$$
\left(
\begin{array}{c | c | c }
	\text{Id}_{n-s}   & 0 &  0 \\
	\hline 
	0 & \text{Id}_{s-1} & \mathbf{a} \\
	\hline 
	0    &0    & 1 
\end{array}
\right)
\in \mathbb{R}_+^{n \times n},
$$
where $s=\# I_Z$ and $\mathbf{a} \in \mathbb{R}_{>0}^{s-1}$ is a column vector whose entries are defined by the quotients $\alpha_{p,j}/\alpha_{p,r}$, with $r \in I_Z$.

The collection $\mathfrak{a}_\xi=\{\mathbf{x}'_{p'}\}_{p'\in \mathcal{Z}^0_\xi}$ with $\mathcal{Z}^0_\xi=\mathcal{Z}^0_{\mathcal{M}_\xi}$, is thus a monomial atlas in $\mathcal{M}_\xi$. Moreover the blowing-up $\pi_\xi$ induces a morphism from $(\mathcal{M}_\xi,\mathfrak{a}_\xi)$ to $(\mathcal{M},\mathfrak{a})$ and the associated combinatorial data is $\mathfrak{B}_{\pi_\xi}=\{B_{p'}\}_{p'\in \mathcal{Z}^0_\xi}$.
\end{proof}
From now on, given an m-combinatorial center of blowing-up $(Z, \mathcal{O},\mathfrak{b})$ for a monomial generalized analytic manifold $(\mathcal{M},\mathfrak{a})$, and the blowing-up morphism $\pi_\xi:\mathcal{M}_\xi \to \mathcal{M}$, with center at $\xi=(Z,\mathcal{O})$, we always consider $\mathcal{M}_\xi$ endowed with the monomial atlas $\mathfrak{a}_\xi$ constructed in Proposition \ref{prop:explosionmonomial}. Moreover, we also write
$$
\pi_\xi:(\mathcal{M}_\xi,\mathfrak{a}_\xi) \to (\mathcal{M},\mathfrak{a}),
$$
to emphasize that the morphism $\pi_\xi$ is considered also as a morphism of monomial generalized analytic manifolds, and we call it an \emph{m-combinatorial blowing-up of $(\mathcal{M},\mathfrak{a})$}. The associated combinatorial data $\mathcal{B}_{\pi_\xi}=\{B_{p'}\}_{p'\in \mathcal{Z}_\xi}$ of this morphism has been made explicit with Equation \eqref{eq:identidad}, for points $p'\in \mathcal{Z}_\xi^0$ with $p'\not\in E_\infty$ and with Equation \eqref{eq:blowup}, for points $p'\in \mathcal{Z}_\xi^0(E_\infty)$. 

\begin{defin} Let $(\mathcal{M},\mathfrak{a})$ be a monomial generalized analytic manifold. An \emph{m-star over $(\mathcal{M},\mathfrak{a})$} is the composition $\sigma=\pi_0\circ \pi_1\circ \cdots \circ \pi_{r-1}$ of a finite sequence of m-combinatorial blowing-ups. That is
$$
	\sigma: (\mathcal{M}_{r},\mathfrak{a}_{r}) \xrightarrow{\pi_{r-1}} (\mathcal{M}_{r-1}, \mathfrak{a}_{r-1}) \xrightarrow{\pi_{r-2}} \cdots \xrightarrow{\pi_{0}} (\mathcal{M}_0,\mathfrak{a}_0)=(\mathcal{M},\mathfrak{a}), 
$$
where for each $k=0,1,2,\ldots,r-1$, the morphism $\pi_k:=\pi_{\xi_k}$ is the blowing-up with respect to $\xi_k=(Z_k,\mathcal{O}_k,\mathfrak{b}_k)$, an m-combinatorial center of blowing-up for $(\mathcal{M}_{k}, \mathfrak{a}_{k})$. The integer $r$ is called the \emph{age} of the m-star $\sigma$ and the monomial generalized analytic manifold $(\mathcal{M}_{r},\mathfrak{a}_{r})$ is the \emph{end} of the m-star. The collection $\mathcal{V}^{\text{m}}_{(\mathcal{M},\mathfrak{a})}$ of all the m-stars over $(\mathcal{M},\mathfrak{a})$ is the \emph{monomial voûte étoilée of $(\mathcal{M},\mathfrak{a})$}.
\end{defin}

\section{Stratified Reduction of Singularities via Principalization of m-ideals}

In this section we introduce the concept of m-ideal, in order to prove a theorem of principalization. With this result we prove the stratified reduction of singularities for a global function defined in generalized analytic manifolds admitting a monomial structure. We apply this result to prove the main result of this paper stated in Theorem \ref{th:main}.

\subsection{Principalization of m-Ideals}

Let us fix a monomial generalized analytic manifold $(\mathcal{M},\mathfrak{a})$, where $\mathcal{M}=(M,\mathcal{G}_M)$. 

Take a global function $f \in \mathcal{G}_M(M)$ and two corner points $p,q\in \mathcal{Z}^0$. Let $\mathbf{x}_q, \mathbf{x}_q \in \mathfrak{a}$ be the affine coordinates at $p$ and $q$, respectively, and let $C^{pq}\in \mathfrak{C}_{(\mathcal{M},\mathfrak{a})}$ be the matrix of exponents codifying the change of coordinates $\mathbf{x}_q\circ \mathbf{x}^{-1}_p$. We have the relation
\begin{equation} \label{eq:relacionsoporte}
\text{Supp}_{p}(f;\mathbf{x}_{p})=\{\lambda_qC^{pq}: \; \lambda_q \in \text{Supp}_{q}(f;\mathbf{x}_{q})\} \subset \mathbb{R}^{I_p}_+,
\end{equation}
where the product has been done passing through the natural identification of $\mathbb{R}^{I_q}$ with $\mathbb{R}^{\{\cdot\} \times I_q}$.
\begin{defin}
A generalized analytic global function $\mathfrak{m} \in \mathcal{G}_M(M)$ is said to be an \emph{m-function in $(\mathcal{M},\mathfrak{a})$} if for each $p\in \mathcal{Z}^0$, there is a map $\lambda_p:I_p \to \mathbb{R}_{+}$, such that 
$$
\mathfrak{m}|_{V^\star_p}=\mathbf{x}_p^{\lambda_p}, \; \text{ where }\mathbf{x}_p\in \mathfrak{a}
$$
The \emph{combinatorial data of $\mathfrak{m}$} is the list $\mathcal{L}_\mathfrak{m}=\{\lambda_p\}_{p\in \mathcal{Z}^0}$.
\end{defin}

Let us consider an m-function $\mathfrak{m}$ in $(\mathcal{M},\mathfrak{a})$ with combinatorial data $\mathcal{L}_{\mathfrak{m}}=\{\lambda_p\}_{p\in \mathcal{Z}^0}$. By Equation \eqref{eq:relacionsoporte},  for any pair of corner points $p,q \in \mathcal{Z}^0$ we have the relation $\lambda_p=\lambda_qC^{pq}$,  where $C^{pq}\in \mathfrak{C}_{(\mathcal{M},\mathfrak{a})}$. In particular, if we consider the set of common indices $J=I_p\cap I_q$, we get that  
\begin{equation} \label{eq:cambiom-funciones}
	\lambda_{p,\ell}=\lambda_{q,\ell}\gamma^{pq}_\ell, \; \text{ for any } \ell\in J,
\end{equation}
where$\gamma^{pq}$ is the weighted connexion function from $p$ to $q$. Indeed, it is enough to check above relation for the case where $p$ and $q$ are connected through an edge $Y$, in view of Equation \eqref{eq:conexioncaminos}. For this case, it holds as a consequence of Lemma \ref{lema:triangularinferior}.

\begin{rem} \label{rem:list}
Given a list of maps $\mathcal{L}=\{\lambda_p: I_p\to \mathbb{R}_+\}_{p\in \mathcal{Z}^0}$ satisfying $\lambda_p=\lambda_qC^{pq}$, for any pair of corner points $p,q \in \mathcal{Z}^0$, there exists an m-function $\mathfrak{m}$ such that $\mathcal{L}_\mathfrak{m}=\mathcal{L}$. 
\end{rem}
%

\begin{defin}
A \emph{finitely generated m-ideal $\mathcal{J}$ in $(\mathcal{M},\mathfrak{a})$} is a sheaf of ideals $\mathcal{J}\subset \mathcal{G}_M$ given by
$$
\mathcal{J}=\mathfrak{m}_1\mathcal{G}_M+\mathfrak{m}_2\mathcal{G}_M+\cdots+\mathfrak{m}_k\mathcal{G}_M=:(\mathfrak{m}_1,\mathfrak{m}_2,\ldots,\mathfrak{m}_k)
$$
where $\mathfrak{m}_1,\mathfrak{m}_2,\ldots,\mathfrak{m}_k$ are m-functions called \emph{m-generators of $\mathcal{J}$}.
\end{defin}
\begin{rem}
	In general we haven't got noetherianity for the rings of generalized analytic functions, then it is relevant to ask the finitely generated condition to $\mathcal{J}$.
\end{rem}

\begin{notation}
	Let $I$ be a finite index set and let $A$ be a finite subset of $\mathbb{R}^I$. We write $A^{\min}$ to denote the set of elements in $A$ that are minimal with respect to the division order $\leq_d$ in $\mathbb{R}^I$.
\end{notation}

Let $\mathcal{J}$ be an m-ideal in $(\mathcal{M},\mathfrak{a})$ with set of m-generators $G=\{\mathfrak{m}_1,\mathfrak{m}_2,\ldots,\mathfrak{m}_k\}$. For each $i=1,2,\ldots,k$, let us write  $\mathcal{L}_{\mathfrak{m}_i}=\{\lambda_p^i\}_{p\in \mathcal{Z}^0}$. Given a corner point $p\in \mathcal{Z}^0$ and $\mathbf{x}_p\in \mathfrak{a}$ the affine coordinates in $p$, the restriction $\mathcal{J}|_{V^\star_p}$ is an m-ideal in the m-corner $(\mathcal{M}|_{V^\star_p},\mathbf{x}_p)$ generated by the restriction $G|_{V^\star_p}=\{\mathfrak{m}_1|_{V^\star_p},\mathfrak{m}_2|_{V^\star_p},\ldots,\mathfrak{m}_k|_{V^\star_p}\}$. The \emph{combinatorial data of $G|_{V^\star_p}$} is the set 
$$
\Gamma_{G,p}:=\{\lambda^1_{p},\lambda^2_{p},\ldots,\lambda^k_{p}\} \subset \mathbb{R}_+^{I_p}.
$$
Note that if $(\Gamma_{G,p})^{\min}=\{\mu_p^{1},\mu_p^{2},\ldots,\mu_p^{{k_p}}\}$, then
\begin{equation} \label{eq:restriccionestrella}
\mathcal{J}|_{V^\star_p}=\mathbf{x}_p^{\mu_p^{1}}\mathcal{G}_M|_{V^\star_p}+\mathbf{x}_p^{\mu_p^{2}}\mathcal{G}_M|_{V^\star_p}+\cdots+\mathbf{x}_p^{\mu_p^{{k_p}}}\mathcal{G}_M|_{V^\star_p}
\end{equation}
The sheaf of ideals $\mathcal{J}$ is called \emph{locally principal} if at each point $a\in M$ we have that $\mathcal{J}_a \subset \mathcal{G}_{M,a}$ is a principal ideal. Since $\mathcal{J}$ is an m-ideal, it is enough to ask this property for the corner points. In terms of the combinatorial data, we have that $\mathcal{J}$ is locally principal if and only if the set $(\Gamma_{G,p})^{\min}$ is a singleton for any $p\in \mathcal{Z}^0$, where $G$ is a set of m-generators of $\mathcal{J}$.

\strut

Let $\mathfrak{m}$ be an m-function in $(\mathcal{M},\mathfrak{a})$ and take an m-star $\sigma: (\mathcal{M}',\mathfrak{a}')\to (\mathcal{M},\mathfrak{a})$. The \emph{total transform} $\sigma^*\mathfrak{m}=\mathfrak{m}\circ \sigma$ of $\mathfrak{m}$ by $\sigma$ is a again an m-function in $(\mathcal{M}',\mathfrak{a}')$. More precisely, if  $p'\in \mathcal{Z}^0_{\mathcal{M}'}$ and $p=\sigma(p')$, then $\lambda'_{p'}\in \mathcal{L}_{\sigma^*\mathfrak{m}}$ is given by 
\begin{equation}\label{eq:mfuncionexplosion}
	\lambda'_{p'}=\lambda_pB^\sigma_{p'},
\end{equation}
where $\lambda_p\in \mathcal{L}_\mathfrak{m}$ and $B^\sigma_{p'}\in \mathfrak{B}_\sigma$ is the matrix of exponents representing $\sigma$ at $p'$. If $\mathcal{J}$ is an m-ideal generated by $G=\{\mathfrak{m}_1,\mathfrak{m}_2,\ldots,\mathfrak{m}_k\}$, observe that the total transform $\sigma^*\mathcal{J}$ is also an m-ideal in $(\mathcal{M}',\mathfrak{a}')$ generated by $\sigma^*G:=\{\sigma^*\mathfrak{m}_1,\sigma^*\mathfrak{m}_2,\ldots,\sigma^*\mathfrak{m}_k\}$.

\strut

The main result in this section is the following one about principalization of m-ideals. It can be seen as a version in the category of generalized analytic manifolds of a well-known result in analytic or algebraic geometry (see for instance \cite{Gow}).

\begin{teo}\label{teo:principalizacion}
Let $\mathcal{J}$ be a finitely generated m-ideal in a monomial generalized analytic manifold $(\mathcal{M},\mathfrak{a})$. There exists an m-star $\sigma\in \mathcal{V}^m_{(\mathcal{M},\mathfrak{a})}$ such that $\sigma^*\mathcal{J}$ is locally principal.
\end{teo}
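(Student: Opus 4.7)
The plan is to proceed by Noetherian induction on a combinatorial invariant measuring the obstruction to local principality. The building block is $n_p(\mathcal{J}) := \#(\Gamma_{G,p})^{\min}$ for $p \in \mathcal{Z}^0$; by Equation \eqref{eq:restriccionestrella}, $\mathcal{J}$ is locally principal if and only if $n_p(\mathcal{J}) = 1$ at every corner $p$. A natural first invariant is $\nu(\mathcal{J}) = (N, M)$, lex-ordered, with $N = \max_p n_p(\mathcal{J})$ and $M$ a secondary count (to be adjusted below). The first observation that feeds the induction is a half-monotonicity property: for any m-combinatorial blow-up $\pi_\xi$, we have $n_{p'}(\pi_\xi^*\mathcal{J}) \leq n_{\pi_\xi(p')}(\mathcal{J})$ at every new corner $p'$. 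Indeed, by Equation \eqref{eq:mfuncionexplosion} the new exponents are $\mu \cdot B^{\pi_\xi}_{p'}$, and the matrix $B^{\pi_\xi}_{p'}$ described in Equations \eqref{eq:identidad} and \eqref{eq:blowup} has non-negative entries, so $\mu \mapsto \mu B^{\pi_\xi}_{p'}$ preserves the division order $\leq_d$ and cannot increase the number of minimal exponents; in particular $N$ never increases.

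For the inductive step with $N \geq 2$, I would pick a corner $p_0$ with $n_{p_0} = N$ and two incomparable minimal exponents $\mu, \mu' \in \Gamma^{\min}_{G, p_0}$; by incomparability there exist indices $i, j \in I_{p_0}$ with $\mu_i > \mu'_i$ and $\mu_j < \mu'_j$. The blow-up center is the codimension-two combinatorial stratum $Z = E_i \cap E_j \in \mathcal{Z}^2$, which contains $p_0$. Using Proposition \ref{prop:extension}(b), I would extend a prescribed local m-standardization $\mathbf{u}_{p_0}$ at $p_0$ to a global m-standardization $(\mathcal{O}, \mathfrak{b})$, choosing the ratio $\alpha_{p_0, i}/\alpha_{p_0, j} = t_0 := (\mu_i - \mu'_i)/(\mu'_j - \mu_j) > 0$. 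A direct substitution in Equation \eqref{eq:blowup} then shows that, at both exceptional corners $p'_0, p''_0$ lying over $p_0$, the images $\mu B$ and $\mu' B$ coincide in the $\infty$-coordinate and are strictly ordered in the coordinate ($j$ at one chart, $i$ at the other) retained from $I_{p_0}$. Hence whenever $\mu$ and $\mu'$ agree on $I_{p_0}\setminus\{i,j\}$, the pair becomes comparable after blow-up and $n_{p'_0}, n_{p''_0} \leq N-1$.

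The genuinely hard part will be globalizing this strict drop. Two issues combine. First, the pair $\mu, \mu'$ at $p_0$ need not agree outside $\{i, j\}$, in which case a single blow-up with center $E_i \cap E_j$ does not reduce $n_{p_0}$; one has to iterate over all pairs of indices along which some pair of minimal exponents is mismatched, each step decreasing a finer invariant. Second and more importantly, because the chosen standardization is global, Equation \eqref{eq:realizable} forces the ratio $\alpha_{q,i}/\alpha_{q,j}$ at any other corner $q \in Z$ to equal $t_0 \cdot \gamma^{p_0 q}_j / \gamma^{p_0 q}_i$, which will typically fail to be the correct value to collapse a pair of minimal exponents at $q$. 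This is precisely the obstruction the authors flag in the Introduction when warning that the local algorithm of \cite[Lemma 4.10]{vdD-Spe} does not apply pointwise; the secondary invariant $M$ must be engineered (for example as a total count of index-pairs at which some pair of minimal exponents disagrees in both signs, summed over corners, or a similar refinement) so that the half-monotonicity at every corner, together with the strict drop in the fiber over $p_0$, forces a lexicographic decrease of $\nu$ at every blow-up of this form.

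Once such an invariant $\nu$ is fixed and the strict decrease is established at each step, Noetherian induction on the well-ordered range of $\nu$ produces a finite sequence of m-combinatorial blow-ups, i.e.\ an m-star $\sigma \in \mathcal{V}^{m}_{(\mathcal{M}, \mathfrak{a})}$, such that $\sigma^*\mathcal{J}$ is locally principal, proving Theorem \ref{teo:principalizacion}.
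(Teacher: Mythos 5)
Your local step is sound and is essentially the same move the paper makes: pick two incomparable exponent vectors at a corner, take the codimension-two center $Z=E_i\cap E_j$, and use Proposition \ref{prop:extension} to extend a local m-standardization whose ratio $\alpha_{p_0,i}/\alpha_{p_0,j}$ is tuned so that the two transformed exponents agree in the $\infty$-coordinate (this is the ``adapted'' condition of the paper, and your half-monotonicity of $n_{p'}$ under the non-negative matrices $B^{\pi_\xi}_{p'}$ is also correct). The genuine gap is that the decreasing invariant, which is the entire global content of the theorem, is never constructed: you set $\nu=(N,M)$ with $M$ ``to be adjusted/engineered,'' and the candidate you float --- a count of mismatched index-pairs summed over corners --- does not work as stated. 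A blow-up with a codimension-two center replaces each corner $p$ of the center by two new corners, and any mismatch at $p$ in indices not both in $I_Z$ reappears at both new corners, so a corner-summed count can easily increase; meanwhile your primary invariant $N=\max_p n_p$ need not drop (as you yourself note, the collapsed pair need not agree outside $\{i,j\}$). So the induction has nothing that provably decreases.

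The paper closes exactly this gap with two devices you skip. First, it reduces to ideals with \emph{two} m-generators (Lemma \ref{cor:bastandosmonomios} plus an obvious recurrence over pairs), so at every corner there is a single pair $(\lambda_p,\mu_p)$ to compare. Second, because the generators are m-functions, the sign condition $(\lambda_{p,i}-\mu_{p,i})(\lambda_{p,j}-\mu_{p,j})<0$ along a codimension-two stratum closure $Z$ is independent of the corner $p\in Z$ (Lemma \ref{lema:desacoplado}, via Equation \eqref{eq:cambiom-funciones}), and the adaptedness equation $\alpha_{p,j}(\lambda_{p,i}-\mu_{p,i})+\alpha_{p,i}(\lambda_{p,j}-\mu_{p,j})=0$ propagates from one corner of $Z$ to all of them (Lemma \ref{lema:estandadaptada}). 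Hence your ``second and more important'' obstruction --- that the ratio forced by Equation \eqref{eq:realizable} at other corners of $Z$ is typically the wrong value --- dissolves in the two-generator, m-function setting; it is only a real obstruction if you try to treat all minimal exponents at once. The invariant is then $\mathrm{Inv}_{\mathcal J}=\#\Omega_{\mathcal J}$, the number of \emph{uncoupled codimension-two combinatorial centers}: a count of global strata closures, not of corner-local data. The final proposition of the paper shows that an adapted blow-up removes $Z$ from the list, creates no uncoupled centers inside $E_\infty$ (because by Equations \eqref{eq:mfuncionexplosion} and \eqref{eq:blowup} the $\infty$-components of the two transformed exponent vectors coincide exactly, at \emph{every} corner of the exceptional divisor), and preserves the remaining ones, so $\mathrm{Inv}$ drops by exactly one. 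Without a corner-independent, globally defined quantity of this kind, your Noetherian induction does not go through as written.
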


To prove this theorem, we can reduce ourselves to the case where $\mathcal{J}$ is generated by two $m$-functions by considering a clear finite recurrence and the following lemma.  

\begin{lema} \label{cor:bastandosmonomios}
Let $\mathcal{J}=(\mathfrak{m}_1,\mathfrak{m}_2,\ldots,\mathfrak{m}_k)$ be an m-ideal in $(\mathcal{M},\mathfrak{a})$. Assume that $\mathcal{J}_{rs}$ is locally principal for any pair of indices $r,s \in \{1,2,\ldots,k\}$, where $\mathcal{J}_{rs}$ is the m-ideal generated by $\mathfrak{m}_r$ and $\mathfrak{m}_s$. Then $\mathcal{J}$ is locally principal.
\end{lema}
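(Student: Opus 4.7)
The plan is to reduce the statement to a purely combinatorial assertion at each corner point and then observe that pairwise comparability for the division order forces the existence of a unique minimal exponent.

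First I would recall that local principality of an m-ideal has been characterized combinatorially: by the discussion around Equation \eqref{eq:restriccionestrella}, the ideal $\mathcal{J}$ is locally principal if and only if, for every corner point $p \in \mathcal{Z}^0$, the set $(\Gamma_{G,p})^{\min}$ is a singleton, where $G=\{\mathfrak{m}_1,\mathfrak{m}_2,\ldots,\mathfrak{m}_k\}$ and $\Gamma_{G,p}=\{\lambda^1_p,\lambda^2_p,\ldots,\lambda^k_p\}\subset\mathbb{R}_+^{I_p}$ with $\mathcal{L}_{\mathfrak{m}_i}=\{\lambda^i_p\}_{p\in\mathcal{Z}^0}$. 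Therefore it suffices to fix an arbitrary $p\in\mathcal{Z}^0$ and show that $(\Gamma_{G,p})^{\min}$ is a singleton under the pairwise hypothesis.

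Next I would translate the hypothesis: for every pair $r,s\in\{1,2,\ldots,k\}$, the m-ideal $\mathcal{J}_{rs}=(\mathfrak{m}_r,\mathfrak{m}_s)$ is locally principal. Applied at the same corner $p$, this means that the two-element set $\{\lambda^r_p,\lambda^s_p\}$ has a unique $\leq_d$-minimal element. Equivalently, $\lambda^r_p$ and $\lambda^s_p$ are comparable in the division order $\leq_d$ of $\mathbb{R}_+^{I_p}$, i.e.\ one of them divides the other componentwise.

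Finally I would invoke the following elementary poset fact: any finite subset of a partially ordered set in which every pair of elements is comparable is itself a chain, and any non-empty finite chain has a unique minimum. Applying this to $\Gamma_{G,p}$, the element $\lambda^{r_0}_p$ with $\lambda^{r_0}_p\leq_d\lambda^s_p$ for all $s$ exists and is unique, so $(\Gamma_{G,p})^{\min}=\{\lambda^{r_0}_p\}$. Since $p$ was arbitrary, $\mathcal{J}$ is locally principal. I do not foresee any serious obstacle: the work has already been done in setting up the combinatorial dictionary of Section~4.1, and the remaining argument is the trivial passage from pairwise comparability to the existence of a minimum in a finite poset.
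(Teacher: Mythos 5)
Your proof is correct and is essentially the contrapositive restatement of the paper's own argument: both reduce to the combinatorial characterization of local principality at each corner $p$ via $(\Gamma_{G,p})^{\min}$, and both rely on the observation that pairwise $\leq_d$-comparability of the exponents $\lambda^1_p,\ldots,\lambda^k_p$ (equivalent to the pairwise principality hypothesis) forces a unique minimal element. The paper phrases it as ``if some $\Gamma_{\mathcal{J},p}$ has no minimum, pick an incomparable pair and contradict the hypothesis,'' while you argue directly that a finite pairwise-comparable set is a chain with a minimum; the content is the same.
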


\begin{proof}
Assume that there is a point $p\in \mathcal{Z}^0$ such that $\Gamma_{\mathcal{J},p}$ has not a minimum. There exist indices $r,s \in \{1,2,\ldots,k\}$ such that $\lambda^r_{p}\in \mathcal{L}_{\mathfrak{m}_r}$ and $\lambda^s_{p}\in \mathcal{L}_{\mathfrak{m}_s}$ are not comparable for the division order $\leq_d$ in $\mathbb{R}^{I_{p}}$. Note that $\Gamma_{\mathcal{J}_{rs},p}=\{\lambda^r_{p},\lambda^s_{p}\}$, and hence $\mathcal{J}_{rs}$ is not locally principal, which is a contradiction.
\end{proof}

\paragraph{The Case of Two Generators.} Let us assume that $\mathcal{J}$ is an m-ideal in $(\mathcal{M},\mathfrak{a})$ generated by two m-functions $\mathcal{J}=(\mathfrak{m},\mathfrak{n})$. We write $\mathcal{L}_\mathfrak{m}=\{\lambda_p\}_{p\in \mathcal{Z}^0}$ and $\mathcal{L}_\mathfrak{n}=\{\mu_p\}_{p\in \mathcal{Z}^0}$.

\strut

We introduce first several definition, mainly inspired by the ``b-invariant'' introduced in \cite{vdD-Spe} by van den Dries and Speissegger.

\strut

Let $Z\in \mathcal{Z}^{n-2}$ be a codimension two combinatorial geometrical center for $\mathcal{M}$. We know that the index set $I_Z$ has just two elements, that is $I_Z=\{i,j\}$. Let $p\in Z$ be a corner point. We say that $Z$ is \emph{uncoupled for $\mathcal{J}$ at $p$} if 
$$
(\lambda_{p,i}-\mu_{p,i})(\lambda_{p,j}- \mu_{p,j}) < 0,
$$
We say that $Z$ is \emph{uncoupled for $\mathcal{J}$} if it is so at each corner point $p\in \mathcal{Z}^0(Z)$. 

\begin{lema} \label{lema:desacoplado}
A combinatorial geometrical center $Z\in \mathcal{Z}^{n-2}$ is uncoupled for $\mathcal{J}$ if and only if there is a corner point $q \in \mathcal{Z}^0(Z)$ such that $Z$ is uncoupled for $\mathcal{J}$ at $q$.
\end{lema}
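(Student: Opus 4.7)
The plan is as follows. The ``only if'' direction is trivial from the definition: if $Z$ is uncoupled at every $p\in\mathcal{Z}^0(Z)$, then it is obviously uncoupled at some such $p$. All the work lies in the reverse implication, which amounts to showing that the sign of the quantity $(\lambda_{p,i}-\mu_{p,i})(\lambda_{p,j}-\mu_{p,j})$ does not depend on the choice of the corner point $p\in\mathcal{Z}^0(Z)$, where $I_Z=\{i,j\}$.

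To do this I would fix an arbitrary corner point $p\in\mathcal{Z}^0(Z)$ and compare with the distinguished corner point $q$ supplied by hypothesis. Since $p,q\in Z$ we have $I_Z=\{i,j\}\subset I_p\cap I_q$, so the indices $i,j$ are common to both corners and the weight connection function $\gamma^{pq}$ is defined on them with positive values (Definition \ref{defin:conexion}). Now I would invoke Equation \eqref{eq:cambiom-funciones} applied separately to the m-functions $\mathfrak{m}$ and $\mathfrak{n}$: for each $\ell\in\{i,j\}$,
\begin{equation*}
\lambda_{p,\ell}=\gamma^{pq}_\ell\,\lambda_{q,\ell},\qquad \mu_{p,\ell}=\gamma^{pq}_\ell\,\mu_{q,\ell}.
\end{equation*}
Subtracting, I get $\lambda_{p,\ell}-\mu_{p,\ell}=\gamma^{pq}_\ell(\lambda_{q,\ell}-\mu_{q,\ell})$ for both $\ell=i$ and $\ell=j$, and multiplying these two equalities yields
\begin{equation*}
(\lambda_{p,i}-\mu_{p,i})(\lambda_{p,j}-\mu_{p,j})=\gamma^{pq}_i\gamma^{pq}_j\,(\lambda_{q,i}-\mu_{q,i})(\lambda_{q,j}-\mu_{q,j}).
\end{equation*}
Since $\gamma^{pq}_i\gamma^{pq}_j>0$, the left hand side is negative exactly when the right hand side is, and the conclusion follows.

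There is no real obstacle here; the only thing to verify carefully is that Equation \eqref{eq:cambiom-funciones} is actually applicable, i.e.\ that the diagonal form of the change-of-coordinates matrix along $I_p\cap I_q$ used to derive it (coming from Lemma \ref{lema:triangularinferior} and Equation \eqref{eq:conexioncaminos}) really covers the case of two arbitrary corner points of $Z$, not just neighbours through a single edge. This is handled by connecting $p$ to $q$ through a path of compact edges inside $Z=E_{I_Z}$ (which exists by connectedness of $E_{I_Z}$) and composing the weight connection functions along the path via Equation \eqref{eq:conexioncaminos}, so the multiplicative identity $\lambda_{p,\ell}=\gamma^{pq}_\ell\lambda_{q,\ell}$ holds for all common indices $\ell$, in particular for $\ell\in\{i,j\}$.
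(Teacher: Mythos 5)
Your proof is correct and follows essentially the same route as the paper's: it invokes Equation~\eqref{eq:cambiom-funciones} for both $\mathfrak{m}$ and $\mathfrak{n}$ to rewrite $(\lambda_{p,i}-\mu_{p,i})(\lambda_{p,j}-\mu_{p,j})$ as $\gamma^{pq}_i\gamma^{pq}_j(\lambda_{q,i}-\mu_{q,i})(\lambda_{q,j}-\mu_{q,j})$ and concludes from the positivity of the weight connection function. Your closing remark about reducing to edge-connected corners via paths inside $E_{I_Z}$ is already part of how the paper derived Equation~\eqref{eq:cambiom-funciones}, so there is no genuine difference in approach.
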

\begin{proof}
Assume that $Z$ is uncoupled for $\mathcal{J}$ at a corner point $q\in \mathcal{Z}^0(Z)$ and take any other corner point $p \in \mathcal{Z}^0(Z)$. By Equation \eqref{eq:cambiom-funciones}, we have that $\lambda_{p,\ell}=\lambda_{q,\ell}\gamma^{pq}_\ell$, for all $\ell \in I_p \cap I_q$, where $\gamma^{pq}$ is the weighted connexion function from $p$ to $q$. Being $p,q \in Z$, we know that $I_Z=\{i,j\}\subset I_p\cap I_q$. Then
$$
(\lambda_{p,i}-\mu_{p,i})(\lambda_{p,j}- \mu_{p,j})=\gamma^{pq}_i\gamma^{pq}_j(\lambda_{q,i}-\mu_{q,i})(\lambda_{q,j}- \mu_{q,j}) <0,
$$
since $\gamma^{pq}_i>0$ and $\gamma^{pq}_j > 0$. Hence $Z$ is uncoupled for $\mathcal{J}$ also at $p$, and we conclude that $Z$ is uncoupled for $\mathcal{J}$.
\end{proof}

Observe that if there are not uncoupled centers for $\mathcal{J}$ passing through a given corner point $p\in \mathcal{Z}^0$, then we necessarily have that $\lambda_p\leq_d \mu_p$ or $\mu_p\leq \lambda_p$, that is $\mathcal{J}_p$ is a principal ideal. 
\begin{defin}
We consider $\Omega_\mathcal{J}$ to be the family of codimension two combinatorial geometrical centers in $\mathcal{M}$ that are uncoupled for $\mathcal{J}$, and define \emph{the invariant of $\mathcal{J}$} to be $\text{Inv}_\mathcal{J}:=\#\Omega_\mathcal{J}$.
\end{defin}
We have that $\text{Inv}_\mathcal{J}=0$ if and only if $\mathcal{J}$ is locally principal. Thus, the objective now is to find an m-star $\sigma \in \mathcal{V}^m_{(\mathcal{M},\mathfrak{a})}$ such that $\text{Inv}_{\sigma^*\mathcal{J}}=0$.

\strut

Suppose that $\text{Inv}_\mathcal{J}>0$ and fix $Z \in \Omega_\mathcal{J}$. Take a corner point $p \in Z$ and pick also a local m-standardization $\mathbf{u}_p$ of $(\mathcal{M},\mathfrak{a})$ at $p$ defined by the map $\alpha_p:I_p \to \mathbb{R}_{>0}$. We say that $\mathbf{u}_p$ is \emph{adapted to $\mathcal{J}$ with respect to $Z$} if 
$$
\alpha_{p,j}(\lambda_{p,i}-\mu_{p,i})+\alpha_{p,i}(\lambda_{p,j}-\mu_{p,j})=0
$$
We say that a global m-standardization of $(\mathcal{M},\mathfrak{a})$ is \emph{adapted to $\mathcal{J}$ with respect to $Z$} if it is so at each corner point $p \in Z$. 
\begin{lema} \label{lema:estandadaptada}
An m-standardization $(\mathcal{O},\mathfrak{b})$ is adapted to $\mathcal{J}$ with respect to $Z$ if and only if there is a corner point $q\in Z$ such that $\mathbf{u}_q \in \mathfrak{b}$ is adapted to $\mathcal{J}$ with respect to $Z$.
\end{lema}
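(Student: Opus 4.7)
The structure of this lemma parallels exactly Lemma \ref{lema:desacoplado}, and my plan is to mimic that proof: reduce to a direct substitution using the transformation laws for the exponents of m-functions and of m-standardizations between corner points of $Z$.

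One direction is immediate from the definition: if $(\mathcal{O},\mathfrak{b})$ is adapted to $\mathcal{J}$ with respect to $Z$, then by definition $\mathbf{u}_q\in\mathfrak{b}$ is adapted at $q$ for every $q\in \mathcal{Z}^0(Z)$, so in particular for at least one such $q$. For the nontrivial direction, assume that some $q\in\mathcal{Z}^0(Z)$ has $\mathbf{u}_q\in\mathfrak{b}$ adapted to $\mathcal{J}$ with respect to $Z$, and fix an arbitrary $p\in\mathcal{Z}^0(Z)$. Since $p,q\in Z$, we have $\{i,j\}=I_Z\subset I_p\cap I_q$, so the weighted connexion function $\gamma^{pq}$ is defined on $\{i,j\}$ and both values $\gamma^{pq}_i,\gamma^{pq}_j$ are strictly positive.

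Now I would apply the two transformation formulas available. For the m-functions $\mathfrak{m},\mathfrak{n}$ with combinatorial data $\mathcal{L}_{\mathfrak{m}}=\{\lambda_p\}$ and $\mathcal{L}_{\mathfrak{n}}=\{\mu_p\}$, Equation \eqref{eq:cambiom-funciones} gives
$$
\lambda_{p,i}=\gamma^{pq}_i\lambda_{q,i},\quad \mu_{p,i}=\gamma^{pq}_i\mu_{q,i},\quad \lambda_{p,j}=\gamma^{pq}_j\lambda_{q,j},\quad \mu_{p,j}=\gamma^{pq}_j\mu_{q,j}.
$$
For the m-standardization data, Lemma \ref{lema:caractestandmonomial} (Equation \eqref{eq:realizable}) gives
$$
\alpha_{p,i}=\gamma^{pq}_i\alpha_{q,i},\qquad \alpha_{p,j}=\gamma^{pq}_j\alpha_{q,j}.
$$
Substituting these into the adaptedness expression at $p$ yields
$$
\alpha_{p,j}(\lambda_{p,i}-\mu_{p,i})+\alpha_{p,i}(\lambda_{p,j}-\mu_{p,j})=\gamma^{pq}_i\gamma^{pq}_j\bigl[\alpha_{q,j}(\lambda_{q,i}-\mu_{q,i})+\alpha_{q,i}(\lambda_{q,j}-\mu_{q,j})\bigr].
$$
Since $\gamma^{pq}_i\gamma^{pq}_j>0$ and the bracketed quantity vanishes by hypothesis, the left-hand side vanishes, so $\mathbf{u}_p$ is adapted to $\mathcal{J}$ with respect to $Z$ at $p$. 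As $p\in\mathcal{Z}^0(Z)$ was arbitrary, the m-standardization $(\mathcal{O},\mathfrak{b})$ is adapted to $\mathcal{J}$ with respect to $Z$.

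There is essentially no obstacle here beyond bookkeeping: the result is a numerical rigidity statement that propagates along $Z$ because both the exponent vectors of the generators and the exponent vectors of the standardization scale by the \emph{same} connexion factors $\gamma^{pq}_\ell$ on the common indices. The only subtlety worth emphasizing in writing the proof is that we do not need $p$ and $q$ to be joined by a single edge inside $Z$; the relations \eqref{eq:cambiom-funciones} and \eqref{eq:realizable} hold for any pair of corner points sharing an index, which is guaranteed here by $\{i,j\}\subset I_p\cap I_q$.
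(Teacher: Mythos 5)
Your proof is correct and follows exactly the paper's own argument: both reduce the claim to the transformation laws for the exponents of the m-functions (Equation \eqref{eq:cambiom-funciones}) and for the m-standardization data (Equation \eqref{eq:realizable}), and both observe that the adaptedness expression at $p$ equals the one at $q$ up to the positive factor $\gamma^{pq}_i\gamma^{pq}_j$. The remark you add — that these transformation laws already hold for arbitrary pairs of corner points sharing the indices $\{i,j\}$, not merely adjacent ones — is accurate and is precisely what makes the paper's one-line substitution legitimate.
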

\begin{proof}
Denote $\Lambda=\Lambda_{(\mathcal{O},\mathfrak{b})}$. Assume that there is a corner point $q \in Z$ such that $\mathbf{u}_q \in \mathfrak{b}$ is adapted to $\mathcal{J}$ with respect to $Z$. Take any other corner point $p\in Z$. In view of the realizability of $\Lambda$ established in Lemma \ref{lema:caractestandmonomial} we know that $\alpha_{q,\ell}=\gamma^{qp}_\ell\alpha_{p,\ell}$, for all $\ell \in I_p \cap I_q$. Since $p,q \in Z$ we have that $I_Z=\{i,j\} \subset I_p\cap I_q$. Then, by Equation \eqref{eq:cambiom-funciones}, we get
$$
\alpha_{p,j}(\lambda_{p,i}-\mu_{p,i})+\alpha_{p,i}(\lambda_{p,j}-\mu_{p,j})=
\gamma^{pq}_j\alpha_{q,j}\gamma^{pq}_i(\lambda_{q,i}-\mu_{q,i})+\gamma^{pq}_i\alpha_{q,i}\gamma^{pq}_j(\lambda_{q,j}-\mu_{q,j})=
$$
$$
=\gamma^{pq}_i\gamma^{pq}_j[\alpha_{q,j}(\lambda_{q,i}-\mu_{q,i})+\alpha_{q,i}(\lambda_{q,j}-\mu_{q,j})]=0.
$$
As a consequence, the local m-standardization $\mathbf{u}_p \in \mathfrak{b}$ is adapted to $\mathcal{J}$ with respect to $Z$ at $p$. We conclude that $(\mathcal{O},\mathfrak{b})$ is adapted to $\mathcal{J}$ with respect to $Z$.
\end{proof}

A codimension two combinatorial center of blowing-up $\xi=(Z,\mathcal{O},\mathfrak{b})$ is \emph{adapted to $\mathcal{J}$} if $Z \in \Omega_\mathcal{J}$ and $(\mathcal{O},\mathfrak{b})$ is an m-standardization adapted to $\mathcal{J}$ with respect to $Z$. The next result assures the existence of such a centers.

\begin{prop}\label{lema:existenciaestandadaptada}
Assume that $\text{Inv}_\mathcal{J}>0$. Then, there are codimension two combinatorial centers of blowing-up adapted to $\mathcal{J}$.
\end{prop}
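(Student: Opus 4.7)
The plan is to show that once we pick an uncoupled center $Z\in\Omega_\mathcal{J}$ (which exists because $\text{Inv}_\mathcal{J}>0$), we can construct a local m-standardization at one corner point $p\in \mathcal{Z}^0(Z)$ satisfying the adapted relation, and then globalize it using Proposition~\ref{prop:extension}(b) together with Lemma~\ref{lema:estandadaptada}. The proof is essentially a simple linear algebra exercise combined with the abundance results proved earlier.

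First, since $\text{Inv}_\mathcal{J}>0$, pick some $Z\in \Omega_\mathcal{J}$ and write $I_Z=\{i,j\}$. Fix any corner point $p\in \mathcal{Z}^0(Z)$ and set
$$
a=\lambda_{p,i}-\mu_{p,i},\qquad b=\lambda_{p,j}-\mu_{p,j},
$$
where $\lambda_p\in \mathcal{L}_{\mathfrak{m}}$ and $\mu_p\in \mathcal{L}_{\mathfrak{n}}$. Since $Z$ is uncoupled for $\mathcal{J}$ at $p$ we have $ab<0$, so $a$ and $b$ are both nonzero and have opposite signs. Consequently the ratio $-b/a$ is a strictly positive real number.

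Next I would construct a local m-standardization $\mathbf{u}_p$ at $p$ whose exponent map $\alpha_p:I_p\to \mathbb{R}_{>0}$ is defined as follows: choose $\alpha_{p,i}$ and $\alpha_{p,j}$ to be any two strictly positive real numbers with $\alpha_{p,j}/\alpha_{p,i}=-b/a$ (for instance $\alpha_{p,i}=|a|$ and $\alpha_{p,j}=|b|$, which are positive since $a,b\ne 0$), and let $\alpha_{p,k}=1$ for every $k\in I_p\setminus\{i,j\}$. By the definition of the ratio we obtain
$$
\alpha_{p,j}(\lambda_{p,i}-\mu_{p,i})+\alpha_{p,i}(\lambda_{p,j}-\mu_{p,j})=\alpha_{p,j}a+\alpha_{p,i}b=\alpha_{p,i}\bigl((-b/a)a+b\bigr)=0,
$$
so $\mathbf{u}_p$ is adapted to $\mathcal{J}$ with respect to $Z$ at the point $p$.

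Finally, I would invoke Proposition~\ref{prop:extension}(b) to extend $\mathbf{u}_p$ to a (global) m-standardization $(\mathcal{O},\mathfrak{b})\in \mathcal{E}(\mathbf{u}_p)$ of $(\mathcal{M},\mathfrak{a})$. By Lemma~\ref{lema:estandadaptada}, since the local chart $\mathbf{u}_p\in\mathfrak{b}$ is adapted at the corner $p\in Z$, the whole m-standardization $(\mathcal{O},\mathfrak{b})$ is adapted to $\mathcal{J}$ with respect to $Z$. Therefore $\xi=(Z,\mathcal{O},\mathfrak{b})$ is a codimension two combinatorial center of blowing-up adapted to $\mathcal{J}$, as required.

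There is essentially no obstacle: the uncoupling hypothesis guarantees the sign condition needed to solve the one linear equation for $(\alpha_{p,i},\alpha_{p,j})$ with positive unknowns, and both the existence of a compatible global m-standardization extending a prescribed local one (Proposition~\ref{prop:extension}(b)) and the propagation from a single corner of $Z$ to all corners of $Z$ (Lemma~\ref{lema:estandadaptada}) have already been established. The only thing to take mild care of is noting that $a,b\ne 0$ so that the positive ratio is well-defined, which is immediate from $ab<0$.
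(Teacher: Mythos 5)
Your proof is correct and follows essentially the same route as the paper: pick $Z\in\Omega_\mathcal{J}$, use the sign condition $ab<0$ at one corner $p\in Z$ to choose positive exponents $\alpha_{p,i},\alpha_{p,j}$ (the paper takes exactly $|a|$ and $|b|$, after relabelling $i,j$) solving the linear adaptedness equation, then extend the local m-standardization globally via Proposition~\ref{prop:extension} and propagate to all corners of $Z$ by Lemma~\ref{lema:estandadaptada}. No gaps; the argument matches the paper's proof of Proposition~\ref{lema:existenciaestandadaptada}.
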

\begin{proof}
By definition $\text{Inv}_\mathcal{J}>0$ if and only if $\Omega_{\mathcal{J}}\ne \emptyset$. Fix an element $Z \in \Omega_{\mathcal{J}}$ and let us see that there are m-standardizations adapted to $\mathcal{J}$ with respect to $Z$. In view of Lemma \ref{lema:estandadaptada}, it is enough to prove the existence of an m-standardization $(\mathcal{O},\mathfrak{b})$ adapted to $\mathcal{J}$ at a corner point $p\in Z$. Fix any corner point $p\in Z$. Since $Z$ is uncoupled for $\mathcal{J}$, we can assume, up to exchanging the indices $i$ and $j$, that 
$$
\ell_i=\lambda_{p,i}-\mu_{p,i} >0, \; \text{ and }\; \ell_j=\mu_{p,j}-\lambda_{p,j} >0.
$$
Take $\alpha_p:I_p \to \mathbb{R}_+$ to be a map such that $\alpha_{p,i}=\ell_i$ and $\alpha_{p,j}=\ell_j$, and take the affine coordinate system $\mathbf{u}_p=\mathbf{x}_p^{\alpha_p}$ defined in $V^\star_p$. Any m-standardization extending $\mathbf{u}_p$ is adapted to $\mathcal{J}$ with respect to $Z$ at the point $p$ because of the definition of $\alpha_p$. Moreover, such an extension exists as a consequence of Proposition \ref{prop:extension}.
\end{proof}

We conclude by applying finitely many times the following result.

\begin{prop}
Let $\mathcal{J}=(\mathfrak{m},\mathfrak{n})$ be an m-ideal with $\text{Inv}_{\mathcal{J}}>0$. Given an m-combinatorial center of blowing-up $\xi=(Z,\mathcal{O},\mathfrak{b})$ adapted to $\mathcal{J}$, the blowing-up $\pi_\xi:\mathcal{M}_\xi \to \mathcal{M}$ centered at $\xi$ satisfies that $\text{Inv}_{\pi_\xi^*\mathcal{J}}=\text{Inv}_{\mathcal{J}}-1$.
\end{prop}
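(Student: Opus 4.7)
The plan is to compare the codimension-two combinatorial centers of $\mathcal{M}_\xi$ with those of $\mathcal{M}$ and to track, via the explicit exponent data, which of them are uncoupled. I keep the notation $\mathcal{L}_\mathfrak{m}=\{\lambda_p\}_{p\in\mathcal{Z}^0}$, $\mathcal{L}_\mathfrak{n}=\{\mu_p\}_{p\in\mathcal{Z}^0}$, and $I_Z=\{i,j\}$.

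First I would describe $\mathcal{Z}^{n-2}_{\mathcal{M}_\xi}$ combinatorially using the index-set recipe underlying Proposition \ref{prop:explosionmonomial}. Its elements split into two families: strict transforms $W'=E'_{\{a,b\}}$ of centers $W=E_{\{a,b\}}\in\mathcal{Z}^{n-2}_{\mathcal{M}}$ with $\{a,b\}\neq\{i,j\}$ (observe that $Z$ itself disappears, as $E'_i\cap E'_j=\emptyset$ in $\mathcal{M}_\xi$), and the new centers $E'_r\cap E_\infty$ with $r\in I$ and $E_r\cap Z\neq\emptyset$ (including $r=i,j$).

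Next I would compute $\lambda'_{p'}$ and $\mu'_{p'}$ at every corner point $p'\in\mathcal{Z}^0_{\mathcal{M}_\xi}$ using Equation \eqref{eq:mfuncionexplosion} together with the explicit matrices $B_{p'}$ from Equations \eqref{eq:identidad} and \eqref{eq:blowup}. For $p'\notin E_\infty$ one simply has $\lambda'_{p'}=\lambda_p$ and $\mu'_{p'}=\mu_p$, where $p=\pi_\xi(p')$. For $p'\in E_\infty$ with $I_{p'}=(I_p\setminus\{j\})\cup\{\infty\}$ (the other chart is symmetric), one obtains $\lambda'_{p',r}=\lambda_{p,r}$ for every $r\in I_p\setminus\{j\}$ and
\[
\lambda'_{p',\infty}=\frac{\alpha_{p,j}}{\alpha_{p,i}}\,\lambda_{p,i}+\lambda_{p,j},
\]
with the analogous formula for $\mu'_{p',\infty}$. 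Invoking that $\xi$ is adapted to $\mathcal{J}$, i.e. $\alpha_{p,j}(\lambda_{p,i}-\mu_{p,i})+\alpha_{p,i}(\lambda_{p,j}-\mu_{p,j})=0$, a direct subtraction yields the key identity $\lambda'_{p',\infty}-\mu'_{p',\infty}=0$.

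From here the count is immediate. For a new center $W'=E'_r\cap E_\infty$, at any of its corner points $p'$ one has $(\lambda'_{p',r}-\mu'_{p',r})(\lambda'_{p',\infty}-\mu'_{p',\infty})=0$, so $W'$ is not uncoupled at $p'$ and hence, by Lemma \ref{lema:desacoplado}, not uncoupled at all; no new uncoupled centers appear. For a strict transform $W'=E'_{\{a,b\}}$ with $\{a,b\}\neq\{i,j\}$, each corner point $p'$ of $W'$ lies over a corner point $p$ of $W$ at which $\{a,b\}\subset I_p$, and the two relevant differences $\lambda'_{p',a}-\mu'_{p',a}$ and $\lambda'_{p',b}-\mu'_{p',b}$ coincide with $\lambda_{p,a}-\mu_{p,a}$ and $\lambda_{p,b}-\mu_{p,b}$ (since $a,b\neq\infty$ and $B_{p'}$ acts as the identity on these entries); therefore $W'$ is uncoupled for $\pi_\xi^*\mathcal{J}$ if and only if $W$ is uncoupled for $\mathcal{J}$. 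Combining these observations with the disappearance of $Z$ gives a bijection between $\Omega_{\pi_\xi^*\mathcal{J}}$ and $\Omega_\mathcal{J}\setminus\{Z\}$, so $\text{Inv}_{\pi_\xi^*\mathcal{J}}=\text{Inv}_\mathcal{J}-1$.

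The only delicate point I anticipate is the combinatorial bookkeeping for strict transforms $W=E_{\{a,b\}}$ with $\{a,b\}\cap\{i,j\}\neq\emptyset$ (say $a=i$, $b=c$, $c\neq j$): one must verify that the corner points of $W'$ inside $E_\infty$ appear only in the chart where $i$ is retained, which follows directly from the recipe $I_{p'}=(I_p\setminus\{j\})\cup\{\infty\}$. The adaptation hypothesis is used exclusively to derive the identity $\lambda'_{p',\infty}-\mu'_{p',\infty}=0$, which is precisely what prevents the exceptional direction from generating new uncoupled centers and makes the decrease of $\text{Inv}_\mathcal{J}$ exact.
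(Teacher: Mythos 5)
Your proposal is correct and follows essentially the same approach as the paper: the central step in both is the identity $\lambda'_{p',\infty}-\mu'_{p',\infty}=0$ at every corner point $p'\in E_\infty$, derived from the adaptation condition $\alpha_{p,j}(\lambda_{p,i}-\mu_{p,i})+\alpha_{p,i}(\lambda_{p,j}-\mu_{p,j})=0$ and Equation \eqref{eq:blowup}, which kills uncoupledness for any new center contained in $E_\infty$, while the strict transforms carry the old exponent differences unchanged. You are in fact slightly more explicit than the paper in spelling out the full case analysis of $\mathcal{Z}^{n-2}_{\mathcal{M}_\xi}$ (noting $E'_i\cap E'_j=\emptyset$ so that $Z$ drops out, and that strict transforms of non-uncoupled centers remain non-uncoupled), which makes the claimed equality, rather than mere inequality, cleanly visible as a bijection $\Omega_{\pi_\xi^*\mathcal{J}}\leftrightarrow\Omega_\mathcal{J}\setminus\{Z\}$.
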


\begin{proof}
Let us write $\mathcal{Z}_\xi=\mathcal{Z}_{\mathcal{M}_\xi}$, $\pi=\pi_\xi$, $E_\infty=\pi^{-1}(Z)$ and $I_Z=\{i,j\}$. Denote also 
$$
\mathcal{L}_\mathfrak{m}=\{\lambda_p\}_{p\in \mathcal{Z}^0}, \; \mathcal{L}_\mathfrak{n}=\{\mu_p\}_{p\in \mathcal{Z}^0}, \quad \mathcal{L}_{\pi^*\mathfrak{m}}=\{\lambda'_{p'}\}_{p'\in \mathcal{Z}_\xi^0}, \; \mathcal{L}_{\pi^*\mathfrak{n}}=\{\mu'_{p'}\}_{p'\in \mathcal{Z}_\xi^0}.
$$
Let us fix an element $T \in \Omega_{\mathcal{J}}$ different from $Z$. Denote by $S_T$ to the stratum in $\mathcal{S}_{\mathcal{M}}$ such that $\overline{S}_T=T$. The adherence $T'$ of $\pi^{-1}(S_T)$ is a codimension two geometrical center having index set $I_{T'}=I_T=\{r,s\}$. For any corner point $p'\in T'$, if we write $p=\pi_\xi(p')$, we have that
$$
\lambda_{p',r}=\lambda_{p,r}, \quad \mu_{p',r}=\mu_{p,r}, \quad \lambda_{p',s}=\lambda_{p,s}, \quad \mu_{p',s}=\mu_{p,s},
$$
in view of the relation between $\lambda_{p'}, \lambda_p$ and $\mu_{p'}, \mu_p$ established in Equation \eqref{eq:mfuncionexplosion}, and the expression of $B_{p'}$ given in Equations \eqref{eq:identidad} and \eqref{eq:blowup}. Then $T'$ is also a codimension two combinatorial center in $\mathcal{M}_\xi$ uncoupled for $\pi^*\mathcal{J}$, that is $T'\in \Omega_{\pi^*\mathcal{J}}$. We want to see that there are not more elements in $\Omega_{\pi^*\mathcal{J}}$ than these ones, that is, there is no codimension two combinatorial geometrical center $Z'\subset E_\infty$ uncoupled for $\pi^*\mathcal{J}$. 

Take a codimension two combinatorial geometrical center $Z'\subset E_\infty$ and a point $p'\in \mathcal{Z}_\xi^0(Z')$. In view of Lemma \ref{lema:desacoplado}, it is enough to prove that $Z'$ is not uncoupled for $\pi^*\mathcal{J}$ at $p'$. More precisely, if we write $I_{Z'}=\{k,\infty\}$, we want to show that 
$$
(\lambda_{p',k}-\mu_{p',k})(\lambda_{p',\infty}- \mu_{p',\infty}) \geq 0.
$$
Let us consider $p=\pi(p')$ and the local data $\alpha_p\in \Lambda_{(\mathcal{O},\mathfrak{b})}$. The corner point $p$ belongs to $Z$, and the affine coordinates $\mathbf{u}_p\in \mathfrak{b}$ define a local m-standardization adapted to $\mathcal{J}$ with respect to $Z$ at $p$, that is, we have the relation $\alpha_{p,j}(\lambda_{p,i}-\mu_{p,i})+\alpha_{p,i}(\lambda_{p,j}-\mu_{p,j})=0$. We know that $I_{p'}=I_p\setminus \{j\}\cup \{\infty\}$, up to exchanging the indices $i$ and $j$. Hence, the expression of $B'=B_{p'}:I_{p}\times I_{p'} \to \mathbb{R}_+$ is, using Equation \eqref{eq:blowup}, as follows:
$$
 B'_{\ell\ell}=1, \text{ for } \ell \in I_p\setminus \{j\}, \quad B'_{j\infty}=1, \quad B'_{i\infty}=\frac{\alpha_{p,j}}{\alpha_{p,i}}=\frac{\mu_{p,j}-\lambda_{p,j}}{\lambda_{p,i}-\mu_{p,i}},
$$ 
and $B'_{rs}=0$, otherwise. By Equation \eqref{eq:mfuncionexplosion} we get the relations $\lambda_{p',k}=\lambda_{p,k}$, $\mu_{p',k}=\mu_{p,k}$, and
$$
\lambda_{p',\infty}=\lambda_{p,j}+B'_{i,\infty}\lambda_{p,i}=\frac{\lambda_{p,i}\mu_{p,j}-\lambda_{p,j}\mu_{p,i}}{\lambda_{p,i}-\mu_{p,i}}=\mu_{p,j}+B'_{i,\infty}\mu_{p,i}=\mu_{p',\infty}.
$$
Thus $(\lambda_{p',k}-\mu_{p',k})(\lambda_{p',\infty}- \mu_{p',\infty}) =0$, and we are done.
\end{proof}
%

\subsection{Stratified Reduction of Singularities in Monomial Manifolds}
We use the result of principalization of m-ideals in order to prove the following statement:

\begin{prop} \label{prop:redsing}
Let us consider a generalized analytic manifold $\mathcal{M}=(M,\mathcal{G}_M)$ admitting a monomial atlas $\mathfrak{a}$. Given a generalized analytic function $f\in \mathcal{G}_M(M)$, there is an m-star $\sigma \in \mathcal{V}^m_{(\mathcal{M},\mathfrak{a})}$ such that $f'= f  \circ \sigma$ is of stratified monomial type.
\end{prop}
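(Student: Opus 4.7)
The strategy is to reduce Proposition~\ref{prop:redsing} to the principalisation result Theorem~\ref{teo:principalizacion}, applied to an m-ideal naturally attached to $f$. Thanks to horizontal stability (Lemma~\ref{lema:estabilidadhorizontal}), together with the automatic validity of $m_S=1$ whenever $e_S\leq 1$ noted in the introduction, the plan is to produce $\sigma$ so that $m_{p'}(f\circ\sigma)=1$ at every corner point $p'$ of the end manifold $\mathcal{M}_r$; stratified monomial type at all other strata will then follow by Lemma~\ref{lema:estabilidadhorizontal}.

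\textbf{Building the m-ideal.} First I fix an auxiliary corner point $p_0\in\mathcal{Z}^0$ and write $\mathrm{Supp}_{\min,p_0}(f;\mathbf{x}_{p_0})=\{\lambda^1,\ldots,\lambda^k\}=:\Delta_{p_0}$. For each $s=1,\ldots,k$ I will lift $\lambda^s$ to a global m-function $\mathfrak{m}_s$ on $(\mathcal{M},\mathfrak{a})$ by prescribing its combinatorial datum at an arbitrary corner point $p$ to be $\lambda^s_p:=\lambda^s\cdot C^{p,p_0}$. This recipe makes sense because the entries of $C^{p,p_0}$ lie in $\mathbb{R}_+$ (they are exponents of a purely monomial change of affine charts), so $\lambda^s_p\in\mathbb{R}_+^{I_p}$, and because the composition rule $C^{p,p_0}=C^{q,p_0}\cdot C^{p,q}$ for matrices of exponents yields the compatibility $\lambda^s_p=\lambda^s_q\cdot C^{p,q}$ required by Remark~\ref{rem:list}. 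Moreover, Equation~\eqref{eq:relacionsoporte} combined with the monotonicity of $\lambda\mapsto\lambda\cdot C^{p,p_0}$ for the division order $\leq_d$ (which holds because $C^{p,p_0}$ has non-negative entries) shows that the minimal elements of $\{\lambda^s\cdot C^{p,p_0}\}_{s=1}^k$ coincide with $\mathrm{Supp}_{\min,p}(f;\mathbf{x}_p)=:\Delta_p$. Hence the finitely generated m-ideal $\mathcal{J}:=(\mathfrak{m}_1,\ldots,\mathfrak{m}_k)$ has, at each corner point $p$, the set of minimal local generators $\{\mathbf{x}_p^\mu\}_{\mu\in\Delta_p}$.

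\textbf{Principalisation and conclusion.} Next I apply Theorem~\ref{teo:principalizacion} to obtain an m-star $\sigma\in\mathcal{V}^m_{(\mathcal{M},\mathfrak{a})}$ with $\sigma^*\mathcal{J}$ locally principal. For any corner point $p'\in\mathcal{Z}^0_{\mathcal{M}_r}$, Proposition~\ref{prop:explosionmonomial} ensures that $p=\sigma(p')$ is a corner point of $\mathcal{M}$ and that the local expression of $\sigma$ at $p'$ is purely monomial, given by a non-negative matrix $B^\sigma_{p'}$ with every column nonzero (since the coordinates of $p$ all vanish at $p'$). Substituting this expression into the monomial presentation $f|_{V^\star_p}=\sum_{\lambda\in\Delta_p}\mathbf{x}_p^\lambda U_\lambda(\mathbf{x}_p)$, with $U_\lambda(\mathbf{0})\neq 0$, yields
$$f\circ\sigma|_{V^\star_{p'}}=\sum_{\lambda\in\Delta_p}\mathbf{x}_{p'}^{\lambda\cdot B^\sigma_{p'}}\,\tilde U_\lambda(\mathbf{x}_{p'}),\qquad \tilde U_\lambda(\mathbf{0})=U_\lambda(\mathbf{0})\neq 0,$$
since the origin of $V^\star_{p'}$ is mapped to the origin of $V^\star_p$. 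Consequently $\mathrm{Supp}_{\min,p'}(f\circ\sigma)=\{\lambda\cdot B^\sigma_{p'}:\lambda\in\Delta_p\}^{\min}$, and local principality of $\sigma^*\mathcal{J}$ at $p'$ says exactly that this set is a singleton. Hence $m_{p'}(f\circ\sigma)=1$, as required.

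\textbf{Main obstacle.} I expect the delicate points to be the construction of $\mathcal{J}$ and the identification of $\mathrm{Supp}_{\min,p'}(f\circ\sigma)$ with the generating exponents of $\sigma^*\mathcal{J}$ at $p'$. The former needs both the cocycle identity for the combinatorial data and the non-negativity of the exponent matrices; the latter relies on Proposition~\ref{prop:explosionmonomial} to guarantee that corner points map to corner points, so that no unit factor $U_\lambda$ degenerates under the monomial pullback and no cancellation occurs between terms of the transformed presentation.
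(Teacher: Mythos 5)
Your argument hinges on the assertion that the entries of the change-of-chart matrix $C^{p,p_0}$ lie in $\mathbb{R}_+$, from which you deduce that $\lambda\mapsto\lambda\cdot C^{p,p_0}$ preserves the division order and hence that the minimal elements of $\{\lambda^s\cdot C^{p,p_0}\}_{s}$ coincide with $\Delta_p=\mathrm{Supp}_{\min,p}(f;\mathbf{x}_p)$. This assertion is false: the definition of a monomial atlas (Equation~\eqref{eq:relacioncoordenadas}) only requires the exponents $r_{ij}$ to lie in $\mathbb{R}$, not in $\mathbb{R}_+$, and negative entries genuinely occur. The place where non-negativity \emph{is} guaranteed is for the matrices $B^\phi_{p'}$ of a \emph{morphism} of monomial manifolds (Equation~\eqref{eq:morfismos}); you appear to have transferred that property to the transition matrices $C^{pq}$, where it fails. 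As a result, the set $\{\lambda^s\cdot C^{p,p_0}\}_{s}^{\min}$ can be a strict subset of $\Delta_p$, and your m-ideal $\mathcal{J}$ is then too small to control $f$ at the other corners.

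A concrete instance: let $(\mathcal{M},\mathfrak{a})$ be the blow-up of $\mathbb{R}^2_+$ at the origin, with corner points $p_0,p_1$ and charts $(u_1,v_1)\mapsto(u_1,u_1v_1)$ and $(u_2,v_2)\mapsto(u_2v_2,v_2)$; the transition sends $(u_1,v_1)$ to $(v_1^{-1},u_1v_1)$, so a $-1$ entry appears. Take $f=x^2y+xy^2+y^4$ pulled back. At $p_0$ one has $f=u_1^3v_1(1+v_1+u_1v_1^3)$, so $\Delta_{p_0}$ is a singleton and your $\mathcal{J}$ is generated by a single m-function, hence already locally principal; the principalisation theorem then produces $\sigma=\mathrm{id}$. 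But at $p_1$ one has $f=v_2^3(u_2^2+u_2+v_2)$, whose minimal support has two elements, so $m_{p_1}(f)=2$ and $f$ is \emph{not} of stratified monomial type. Indeed $\lambda^1_{p_0}C^{p_1,p_0}$ is the exponent of $u_2^2v_2^3$, which is in $\mathrm{Supp}_{p_1}(f)$ but not minimal there. The paper avoids this by defining $G_f$ to contain the m-functions $\mathfrak{m}_{\lambda_q}$ for $\lambda_q\in\Delta_q$ ranging over \emph{every} corner point $q\in\mathcal{Z}^0$, so that $\Delta_q\subset\Gamma_{G_f,q}$ holds tautologically and $(\Gamma_{G_f,q})^{\min}=\Delta_q$ follows without any monotonicity of the transition matrices (this is Equation~\eqref{eq:minimal}). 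The rest of your argument (identifying $\mathrm{Supp}_{\min,p'}(f\circ\sigma)$ via the monomial pullback and non-vanishing of the units) is essentially Step~1 of Lemma~\ref{lema:m-idealimplicareduction} in the paper and is fine; the gap is confined to the construction of $\mathcal{J}$.
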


More precisely, we associate to $f$ a finitely generated m-ideal $\mathcal{J}_f$, and we prove that the principalization of $\mathcal{J}_f$ gives rise to the stratified reduction of singularities of $f$.

\strut

Let us consider a monomial generalized analytic manifold $(\mathcal{M},\mathfrak{a})$, and let $f \in \mathcal{G}_M(M)$ be a generalized analytic function in $\mathcal{M}$, where $\mathcal{M}=(M,\mathcal{G}_M)$. Fix a corner point $q\in \mathcal{Z}^0$ and take the affine coordinates system $\mathbf{x}_q \in \mathfrak{a}$. For simplicity, we will use from now on the notation $\Delta_{f}^{\mathfrak{a},q}:=\text{Supp}_{q}(f;\mathbf{x}_{q})\subset \mathbb{R}^{I_q}$. Given $\lambda_q\in \Delta_f^{\mathfrak{a},q}$, it makes sense to define the m-function $\mathfrak{m}_{\lambda_q}$ as the one having  the collection of maps $\mathcal{L}_{\mathfrak{m}_{\lambda_q}}=\{\lambda_qC^{pq}\}_{p\in \mathcal{Z}^0}$ as a combinatorial data, in view of Remark \ref{rem:list} and Equation \eqref{eq:relacionsoporte}.  The \emph{m-ideal $\mathcal{J}_f$ associated to $f$} is the ideal sheaf generated by the finite set of m-functions
$$
G_f=\bigcup_{q\in\mathcal{Z}^0} \big\{\mathfrak{m}_{\lambda_q}: \; \lambda_q\in \Delta_{f,\min}^{\mathfrak{a},q}\big\}.
$$
By definition, notice that for any corner point $q\in \mathcal{Z}^0$, we have
\begin{equation} \label{eq:minimal}
(\Gamma_{G_f,q})^{\min}=\Delta_{f,\min}^{\mathfrak{a},q}.
\end{equation}

\begin{lema} \label{lema:m-idealimplicareduction}
Given an m-star $\tau: (\mathcal{M}',\mathfrak{a}')\to (\mathcal{M},\mathfrak{a})$, we have that $\tau^*\mathcal{J}_f=\mathcal{J}_{f'}$, where $f'=f \circ \tau$.
\end{lema}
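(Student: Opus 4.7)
The plan is to verify $\tau^*\mathcal{J}_f=\mathcal{J}_{f'}$ locally on each affine chart $V^\star_{q'}$ for $q'\in\mathcal{Z}^0_{\mathcal{M}'}$. Since such charts cover $M'$ and an m-ideal in a monomial manifold is determined by its restrictions to them, this will suffice. Throughout, fix $q'$, put $q=\tau(q')\in\mathcal{Z}^0_{\mathcal{M}}$ (a corner point, since $\tau$ is a morphism of monomial manifolds), and let $B=B^\tau_{q'}:I_q\times I_{q'}\to\mathbb{R}_+$ be the matrix of exponents of $\tau$ at $q'$, so that $x_{q,i}\circ\tau=\mathbf{x}_{q'}^{b_i}$ with $b_i(j)=B(i,j)$.

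First, I would rewrite $\mathcal{J}_f|_{V^\star_q}$ in the form most convenient for pulling back. Combining \eqref{eq:restriccionestrella} with \eqref{eq:minimal} gives
\begin{equation*}
\mathcal{J}_f|_{V^\star_q}=\sum_{\lambda\in\Delta^{\mathfrak{a},q}_{f,\min}}\mathbf{x}_q^{\lambda}\,\mathcal{G}_M|_{V^\star_q},
\end{equation*}
and since every $\lambda\in\Delta_f^{\mathfrak{a},q}$ dominates, in the division order, some element of $\Delta^{\mathfrak{a},q}_{f,\min}$, the same ideal is generated by the larger (possibly infinite) family $\{\mathbf{x}_q^{\lambda}:\lambda\in\Delta_f^{\mathfrak{a},q}\}$. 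Applying $\tau^*$ and the monomial form of $\tau$ at $q'$ then yields
\begin{equation*}
(\tau^*\mathcal{J}_f)|_{V^\star_{q'}}=\sum_{\lambda\in\Delta_f^{\mathfrak{a},q}}\mathbf{x}_{q'}^{\lambda B}\,\mathcal{G}_{M'}|_{V^\star_{q'}}.
\end{equation*}

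Second, I would compute $\Delta_{f'}^{\mathfrak{a}',q'}$ by direct substitution. From $f|_{V^\star_q}=\sum_\lambda a_\lambda\mathbf{x}_q^{\lambda}$ one obtains $f'|_{V^\star_{q'}}=\sum_\lambda a_\lambda\mathbf{x}_{q'}^{\lambda B}$. The decisive point is that $B$ is invertible: unwinding $\tau$ as a composition of m-combinatorial blowing-ups, $B$ is the product along a path of factors of the form \eqref{eq:identidad} or \eqref{eq:blowup}, and each factor is invertible (in the exceptional case, after reordering it is upper triangular with unit diagonal). Injectivity of $\lambda\mapsto\lambda B$ forbids exponent collisions among the terms of $f'|_{V^\star_{q'}}$, so $\Delta_{f'}^{\mathfrak{a}',q'}=\{\lambda B:\lambda\in\Delta_f^{\mathfrak{a},q}\}$. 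Feeding this into the analogue of the first step for $(f',\mathcal{M}')$ produces $\mathcal{J}_{f'}|_{V^\star_{q'}}$ in the same form as $(\tau^*\mathcal{J}_f)|_{V^\star_{q'}}$, finishing the local comparison.

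The main delicate point is the support identification $\Delta_{f'}^{\mathfrak{a}',q'}=\{\lambda B:\lambda\in\Delta_f^{\mathfrak{a},q}\}$, which rests on the invertibility of $B=B^\tau_{q'}$. This is the single place where it matters that $\tau$ is an m-star (and not merely an arbitrary monomial morphism), and it is established by induction on the age of $\tau$ using the explicit block description in \eqref{eq:identidad} and \eqref{eq:blowup}. Once that is in hand, the equality of the two ideals on each $V^\star_{q'}$ is immediate, and globalization follows from the fact that an m-ideal is determined by its restrictions to the affine stars at corner points.
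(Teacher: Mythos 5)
Your proof is correct and reaches the conclusion by a route that is close to the paper's but organizes the support comparison differently. The paper also fixes a corner $p'$ and its image $p=\tau(p')$, introduces $\Theta=\{\lambda B^\tau_{p'}:\lambda\in\Delta_{f,\min}^{\mathfrak{a},p}\}$, and then proves in two steps that $(\Gamma_{\tau^*G_f,p'})^{\min}=\Theta^{\min}=(\Gamma_{G_{f'},p'})^{\min}$. The second step requires the auxiliary claim \emph{if $\lambda\leq_d\mu$ then $\lambda B^\tau_{p'}\leq_d\mu B^\tau_{p'}$}, which the paper proves by unwinding $\tau$ into single blowing-ups and inspecting the formulas \eqref{eq:identidad}--\eqref{eq:blowup} (the claim is also immediate from the fact that $B^\tau_{p'}$ has non-negative entries by Equation \eqref{eq:morfismos}). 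Your version replaces that minimal-support bookkeeping by the observation that $\mathcal{J}_f|_{V^\star_q}$ and $\mathcal{J}_{f'}|_{V^\star_{q'}}$ are generated by the monomials indexed by the \emph{full} supports $\Delta_f^{\mathfrak{a},q}$ and $\Delta_{f'}^{\mathfrak{a}',q'}$, respectively, and that $\Delta_{f'}^{\mathfrak{a}',q'}=\{\lambda B^\tau_{q'}:\lambda\in\Delta_f^{\mathfrak{a},q}\}$. The two generating families are then identified outright, so the monotonicity claim never has to be stated; the invertibility of $B^\tau_{q'}$ (used in the paper's Step 1 as well, and correctly justified by you via the block form of the factors) is the only non-trivial input. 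The small price is that you manipulate an a priori infinite generating family and tacitly use that the total transform of a sheaf of ideals is generated locally by the pullbacks of \emph{any} local generating family, not just of the fixed finite set $G_f$. Both points are harmless (the family $\Delta_f^{\mathfrak{a},q}$ reduces to the finite $\Delta_{f,\min}^{\mathfrak{a},q}$, and the inverse-image ideal sheaf is independent of the choice of local generators), but since the rings here are not Noetherian it is worth making them explicit rather than implicit.
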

\begin{proof}
In view of Equation \eqref{eq:restriccionestrella}, it is enough to prove that for any corner point $p'\in \mathcal{Z}^0_{\mathcal{M}'}$ the equality $(\Gamma_{\tau^*G_f,p'})^{\min}=(\Gamma_{G_{f'},p'})^{\min}$ holds.
Denote for short $\Gamma_1=\Gamma_{\tau^*G_f,p'}$ and $\Gamma_2=\Gamma_{G_{f'},p'}$

Fix a point $p'\in \mathcal{Z}^0_{\mathcal{M}'}$ and consider $p=\tau(p')$. Write $\Delta:=\Delta_{f, \min}^{\mathfrak{a},p}$, and let $B^\tau_{p'}\in \mathfrak{B}_\tau$ be the matrix of exponents codifying $\tau$ at the corner point $p'$. Let $\Theta:=\{\lambda_pB^\tau_{p'}; \; \lambda_p\in \Delta\}$.  We prove that both $\Gamma^{\min}_1$ and $\Gamma^{\min}_2$ are equal to $\Theta^{\min}$. 

\strut

\emph{Step 1:  $\Gamma^{\min}_2=\Theta^{\min}.\quad $} Recall by Equation \eqref{eq:minimal} that $\Gamma^{\min}_2=\Delta'$, where $\Delta':=\Delta_{f'\min}^{\mathfrak{a}',p'}$. Let $\Delta=\{\lambda^1,\lambda^2,\ldots,\lambda^k\} \subset \mathbb{R}^{I_p}$, that is, the function $f$ around the corner point $p$ has the finite presentation $f|_{V^\star_p}=\mathbf{x}_{p}^{\lambda^1}U_1+\mathbf{x}_{p}^{\lambda^2}U_2+\cdots \mathbf{x}_{p}^{\lambda^k}U_k$, where $U_i(p)\ne 0$, for all $i=1,2,\ldots,k$. Taking into account that $\tau(V^\star_{p'})\subset V^\star_{p}$, the function $f'=f \circ \tau$ is written in the chart $\mathbf{x}_{p'}\in \mathfrak{a}'$ as
$$
f'|_{V^{\star}_{p'}}=\mathbf{x}_{p'}^{\tilde{\lambda}^1}(U_1\circ \tau|_{V^{\star}_{p'}})+\mathbf{x}_{p'}^{\tilde{\lambda}^2}(U_2\circ \tau|_{V^{\star}_{p'}})+\cdots \mathbf{x}_{p'}^{\tilde{\lambda}^k}(U_k\circ \tau|_{V^{\star}_{p'}}),
$$
where $\tilde{\lambda}^i=\lambda^iB^\tau_{p'}$. Since $B^\tau_{p'}$ is an invertible matrix, we can assure that $\tilde{\lambda}^r\ne \tilde{\lambda}^s$ for any pair of different indices $r,s \in \{1,2,\ldots,k\}$. This implies that $\Delta'= \{\tilde{\lambda}^1,\tilde{\lambda}^2,\ldots,\tilde{\lambda}^k\}^{\min}$ by definition of minimal support of $f'$ at $p'$. We are done, since $\Theta=\{\tilde{\lambda}^1,\tilde{\lambda}^2,\ldots,\tilde{\lambda}^k\}$.

\strut

\emph{Step 2: } $\Gamma_1^{\min}=\Theta^{\min}.\quad$ Write $\Gamma_{G_f,p}=\Delta \cup \tilde{\Delta}$ with $\Delta \cap \tilde{\Delta}=\emptyset$.
By Equation \eqref{eq:minimal} we know that for any $\mu \in \tilde{\Delta}$ there exists $\lambda \in \Delta$ such that $\lambda<_d \mu$. Note that $\Gamma_1=\Theta \cup \tilde{\Theta}$, where $\tilde{\Theta}:=\{\mu B^\tau_{p'}; \; \mu\in \tilde{\Delta}\}$. Therefore, we need only to prove the following claim:\emph{
	If $\lambda,\mu: I_p \to \mathbb{R}_+$ satisfy $\lambda \leq _d \mu$, then $\lambda B^\tau_{p'} \leq _d \mu B^\tau_{p'}$.}

For that, it is enough to consider the case where $\tau$ is a single m-combinatorial blowing-up $\tau=\pi_{\xi}$ with center $\xi =(Z,\mathcal{O},\mathfrak{b})$. Denote, as usual, $E_\infty=\pi_\xi^{-1}(Z)$.  If $p'\notin E_\infty$ or equivalently $p\notin Z$, we have that $I_{p'}=I_{p}$ and $B^\tau_{p'}=D_{\mathds{1}_{I_p}}$, and we are done. Assume that $p'\in E_\infty$, and let $j \in I_Z$ be the index such that $I_{p'}\setminus \{\infty\}=I_p\setminus \{j\}$.  Denote $\lambda'=\lambda B^\tau_{p'}$ and $\mu'=\mu B^\tau_{p'}$.  By Equation \eqref{eq:blowup}, we have that
$\lambda'_{\ell}=\lambda_{\ell}$, $\mu'_{\ell}=\mu_{\ell}$, and thus $\lambda'_{\ell} \leq \mu'_{\ell}$, for all $\ell \in I_{p'}\setminus \{\infty\}$; whereas
$$
\lambda'_{\infty}=\sum_{\ell \in I_Z}\frac{\alpha_{p,j}}{\alpha_{p,\ell}}\lambda_{\ell} \leq \sum_{\ell \in I_Z}\frac{\alpha_{p,j}}{\alpha_{p,\ell}}\mu_{\ell}= \mu'_{\infty},
$$
where $\alpha_p \in \Lambda_{(\mathcal{O},\mathfrak{b})}$, as we wanted.
\end{proof}

\begin{proof}[Proof of Propostion \ref{prop:redsing}:]
In view of Theorem \ref{teo:principalizacion}, we can take an m-star $\sigma: (\mathcal{M}',\mathfrak{a}')\to (\mathcal{M},\mathfrak{a})$ such that $\sigma^*\mathcal{J}_f$ is locally principal. By Lemma \ref{lema:m-idealimplicareduction} we know also that $\sigma^*\mathcal{J}_f=\mathcal{J}_{f\circ\sigma}$. Hence, since $G_{f\circ\sigma}$ is a set of generators of $\mathcal{J}_{f\circ\sigma}$, we have $(\Gamma_{G_{f\circ\sigma},p'})^{\min}$ is a singleton for all $p'\in \mathcal{Z}^0_{\mathcal{M}'}$. Finally, by Equation \eqref{eq:minimal} we obtain 
$$
m_{p'}(f)=\# \Delta_{f'\min}^{\mathfrak{a}',p'}=\# (\Gamma_{G_{f\circ\sigma},p'})^{\min}=1
$$
for all $p'\in \mathcal{Z}^0_{\mathcal{M}'}$. Since $\mathfrak{a}$ is a monomial atlas, we know that $M'=\bigcup_{p'\in \mathcal{Z}^0_{\mathcal{M}'}} V^\star_{p'}$. Thus, given a stratum $S \in \mathcal{S}_{\mathcal{M}}$, there is a corner point $p'\in \mathcal{Z}^0_{\mathcal{M}'}$ such that $p'\in \bar{S}$. In view of the horizontal stability property for the monomial complexity established in Lemma \ref{lema:estabilidadhorizontal}, we get $m_S(f) \leq m_{p'}(f)=1$. We conclude that $f'$ is of stratified monomial type.
\end{proof}

\subsection{Proof of the Main Statement}
We end by proving the stratified reduction of singularities for generalized analytic functions, as stated in Theorem \ref{th:main}. 

\strut

Recall that we have a generalized analytic manifold $\mathcal{M}=(M,\mathcal{G}_M)$, and a generalized analytic function $f \in \mathcal{G}_M(M)$ in $\mathcal{M}$. Given a point $p\in M$, we want to prove the existence of an open neighbourhood $V\subset M$ of $p$ and also the existence of a finite sequence of blowing-ups
$$
\sigma:	\mathcal{M}_r
	\stackrel{\pi_{r-1}}{\longrightarrow}
	\mathcal{M}_{r-1}
	\stackrel{\pi_{r-2}}{\longrightarrow} 
	\cdots
	\stackrel{\pi_0}{\longrightarrow}
	\mathcal{M}_0=(V,\mathcal{G}_M|_V),
$$
such that $f'=f\circ\sigma$ is of stratified monomial type in $\mathcal{M}_r$. Moreover, we are going to see that it can be done by taking blowing-ups with combinatorial centers of codimension two.

\strut

Fix a point $p\in M$, let $S$ be the stratum of $\mathcal{S}_\mathcal{M}$ containing $p$, and let us write $k:=\dim S$. We denote $e:=n-k$, where $n$ is the dimension of $M$. Take a local chart 
$$
\varphi:V \to \mathbb{R}^{k} \times \mathbb{R}_+^{e},
$$
centered at $p$. Note that  $\varphi(S)=\mathbb{R}^k \times \{\mathbf{0}\}$. Assume that the minimal support of $f$ along $S$, with respect to $\varphi$, is $\Delta_0:=\text{Supp}_{\min,S}(f;\varphi)=\{\lambda^1,\lambda^2,\ldots,\lambda^t\} \subset \mathbb{R}^e_+$. That is
$$
f\circ\varphi^{-1}=\mathbf{z}^{\lambda^1}U_1(\mathbf{y},\mathbf{z})+\mathbf{z}^{\lambda^2}U_2(\mathbf{y},\mathbf{z})+\cdots+\mathbf{z}^{\lambda^t}U_t(\mathbf{y},\mathbf{z}),
$$
where $\mathbf{y}=(y_1,y_2,\ldots,y_k)$ and $\mathbf{z}=(z_1,z_2,\ldots,z_e)$ are the natural coordinates in $\mathbb{R}^k$ and $\mathbb{R}^e$, respectively, $U_i(\mathbf{y},\mathbf{0})\not\equiv 0$, for all $i=1,2,\ldots,t$, and the elements of $\Delta_0$ are two-by-two incomparable with respect to the division order $\leq_d$ in $\mathbb{R}^e$.  

\strut

Consider the m-corner $(\mathbb{G}_e,\mathbf{z})$, and the generalized analytic function $\bar{f}=\mathbf{z}^{\lambda^1}+\mathbf{z}^{\lambda^2}+\cdots+\mathbf{z}^{\lambda^t}$ defined in $\mathbb{R}^e_+$. By Proposition \ref{prop:redsing}, there is a sequence of m-combinatorial blowing-ups
$$
\bar{\sigma}: (\bar{\mathcal{M}}_r,\mathfrak{a}_r)\stackrel{\bar{\pi}_{r-1}}{\longrightarrow} (\bar{\mathcal{M}}_{r-1},\mathfrak{a}_{r-1}) \stackrel{\bar{\pi}_{r-2}}{\longrightarrow} \cdots \stackrel{\bar{\pi}_0}{\longrightarrow} (\bar{\mathcal{M}}_0,\mathfrak{a}_0)=(\mathbb{G}_e,\mathbf{z}),
$$
such that $\bar{f}'=\bar{f} \circ \bar{\sigma}$ is of stratified monomial type. Let us write $\tilde{\sigma}=\text{id} \times \bar{\sigma}$, and  $\mathcal{W}=(\mathbb{R}^k,{\bar{\mathcal{O}}_k})$, where the sheaf ${\bar{\mathcal{O}}_k}$ has been introduced in Example \ref{ejem:traslacion}. 
If we prove that the map
$$
\sigma=\varphi^{-1}\circ \tilde{\sigma}:  \mathcal{W} \times \bar{\mathcal{M}}_r\to (V,\mathcal{G}_V)
$$
is the composition of a finite sequence of combinatorial blowing-ups such that $f'=f\circ \sigma$ is of stratified monomial type, we are done.

\strut

\paragraph{Step 1:} We see first that $\sigma$ is a sequence of combinatorial blowing-ups. 

Given an index $i$ among $0$ and $r$, we write $\bar{\mathcal{M}}_i=(\bar{M}_{i},\mathcal{G}_{\bar{M}_{i}})$. We consider the product manifold $\mathcal{M}_i=\mathcal{W}\times \bar{\mathcal{M}}_i$, and we denote by $\mathcal{G}_{M_i}$ to the structural sheaf of $\mathcal{M}_i$. We want to prove that, for $i\ne r$, the map $\pi_i=\text{id} \times \bar{\pi}_i: \mathcal{M}_{i+1} \to \mathcal{M}_i$ is a combinatorial blowing-up. Let $\bar{\xi}_i=(\bar{Z}_i,\mathcal{O}_{\bar{M}_i})$ be the center of blowing-up of $\bar{\pi}_i$, and denote $\bar{\mathcal{N}}_i=(\bar{M}_i,\mathcal{O}_{\bar{M}_i})$. By definition of standardization, recall that $\mathcal{O}_{\bar{M}_i}^\epsilon=\bar{\mathcal{G}}_i$. Define $Z_i=\mathbb{R}^k \times \bar{Z}_i$, and let $\mathcal{O}_{M_i}$ be the structural sheaf of the standard analytic manifold $\mathcal{W}\times\bar{\mathcal{N}}_i$.  Note that $\mathcal{O}_{M_i}^\epsilon=\mathcal{G}_{M_i}$ and that $Z_i \in \mathcal{Z}_{\mathcal{M}_i}$, and thus $\xi_i=(Z_i,\mathcal{O}_{M_i})$ is a combinatorial center of blowing-up for $\mathcal{M}_i$. Finally, we have that $\pi_i$ is the blowing-up of $\mathcal{M}_i$ centered at $\xi_i$. Indeed, just note that the blowing-up centered at $Z_i$ of the standard analytic manifold $(M_{i},\mathcal{O}_{M_i})$ is $\pi_{Z_i}=\text{id}\times \pi_{\bar{Z}_i}$, where $\pi_{\bar{Z}_i}:\tilde{\bar{\mathcal{N}}}_i \to \bar{\mathcal{N}}_i$ is the blowing-up centered at $\bar{Z}_i$ of $\bar{\mathcal{N}}_i$.

To finish, we see that the composition $\varphi^{-1}\circ \pi_0: (M_1,\mathcal{G}_{M_1}) \to (V,\mathcal{G}_V)$ is a blowing-up of the generalized analytic manifold $(V,\mathcal{G}_V)$. The combinatorial geometrical center is $\tilde{Z}_0=\varphi^{-1}(Z_0)$ and the standardization is the sheaf  $\mathcal{O}_V$ given locally  at $q \in V$ by
$$
\mathcal{O}_{V,q}=\{ g\circ \varphi: \; g \in \mathcal{O}_{M_0,\varphi(q)}\}.
$$
Indeed, note that $M_0=\mathbb{R}^k \times \mathbb{R}^e_+$, and that $\mathcal{O}_{M_0}^\epsilon=\mathcal{G}_{M_0}=\mathcal{G}_{k,e}$, where $\mathcal{G}_{k,e}$ has been introduced in Example \ref{ejem:estructuramixta}. Since $\varphi$ is an isomorphism we have that $\mathcal{O}_V \subset \mathcal{G}_V$ and also that $\mathcal{O}_V^\epsilon=\mathcal{G}_V$. Then $\xi=(\tilde{Z}_0,\mathcal{O}_V)$ is a combinatorial center of blowing-up. Since the blowing-up with center at $\tilde{Z}_0$ of the standard analytic manifold $(V,\mathcal{O}_V)$ is $\pi_{\tilde{Z}_0}=\pi_{Z_0}\circ\varphi^{-1}$, we get $\pi_\xi=\pi_0\circ\varphi^{-1}$ as we wanted.

\strut

\paragraph{Step 2:} Let us see that the function $f': M_r \to \mathbb{R}$ is of stratified monomial type. 

Denote $\mathcal{Z}^0_r= \mathcal{Z}^0_{\bar{\mathcal{M}}_r}$. Recall that $\mathcal{M}_r=\mathcal{W}\times \bar{\mathcal{M}}_r$, then there is a bijection between $\mathcal{Z}^0_r$ and the strata of dimension $k$ in $\mathcal{S}_{\mathcal{M}_r}$ sending a corner point $q$ into the stratum $S_q=\mathbb{R}^k \times \{q\}$. Let us prove that 
\begin{equation} \label{eq:igualdad}
m_{S_q}(f')=m_q(\bar{f}')=1,
\end{equation}
for each $q \in \mathcal{Z}^0_r$. If we do it, we are done. Indeed, any other stratum in $S \in \mathcal{M}_r$ contains $S_q$ in its adherence for some $q\in \mathcal{Z}^0_{r}$, and by the horizontal stability for the monomial complexity stated in Lemma \ref{lema:estabilidadhorizontal} we have that $m_S(f') \leq m_{S_q}(f')=1$.

\strut

Fix a corner point $q\in\bar{M}_r$ and $\mathbf{z}_q\in \mathfrak{a}_r$. Let us prove Equation \eqref{eq:igualdad}.  Take $B_q \in \mathfrak{B}_{\bar{\sigma}}$ be the matrix of exponents representing $\bar{\sigma}$ at $q$ and denote $\bar{\sigma}\Delta_0=\{\tilde{\lambda}^1,\tilde{\lambda}^2,\ldots,\tilde{\lambda}^t\}$, where $\tilde{\lambda}^i=\lambda^iB_{p'}$ for all $i=1,2,\ldots, t$. The expression of $f'$ around $S_q$ is:
$$
f'|_{\mathbb{R}^k \times \bar{V}^\star_q}=\mathbf{z}_{q}^{\tilde{\lambda}^1}(U_1\circ \tilde{\sigma}|_{\mathbb{R}^k\times V^{\star}_{q}})+\mathbf{z}_{q}^{\tilde{\lambda}^2}(U_2\circ \tilde{\sigma}|_{\mathbb{R}^k\times V^{\star}_{q}})+\cdots \mathbf{z}_{q}^{\tilde{\lambda}^k}(U_k\circ \tilde{\sigma}|_{\mathbb{R}^k\times V^{\star}_{q}}),
$$
and we also have that $\bar{f}'|_{\bar{V}^\star_q}=\mathbf{z}_{q}^{\tilde{\lambda}^1}+\mathbf{z}_{q}^{\tilde{\lambda}^2}+\cdots \mathbf{z}_{q}^{\tilde{\lambda}^k}$. Applying the same arguments that we have used in the proof of the first step of Lemma \ref{lema:m-idealimplicareduction}, we get that 
$$
\text{Supp}_{\min,q}(\bar{f}';\mathbf{z}_q)=\text{Supp}_{\min,S_q}(f'; (\mathbf{y}, \mathbf{z}_q))=(\bar{\sigma}\Delta_0)^{\min}.
$$
Since $(\bar{\sigma}\Delta_0)^{\min}$ is a singleton, we conclude $m_{S_q}(f') =\ m_{q}(\bar{f}')=1$, as wanted.

\end{document}